\DeclareMathAlphabet{\mathpzc}{OT1}{pzc}{m}{it}
\newtheorem{thm}{Theorem}[section]
\newtheorem{lem}[thm]{Lemma}
\newtheorem{prop}[thm]{Proposition}
\newtheorem{cor}[thm]{Corollary}
\newtheorem*{cor*}{Corollary}
\newtheorem*{lem*}{Lemma}
\numberwithin{thm}{section}
\theoremstyle{definition}
\newtheorem{defn}{Definition}[section]
\numberwithin{equation}{section}
\newtheorem{Rmk}[equation]{Remark}
\theoremstyle{plain}
\def\CC{{\mathbb C}}
\def\NN{{\mathbb N}}
\def\RR{{\mathbb R}}
\def\ZZ{{\mathbb Z}}
\def\vecu{{\text{\boldmath$u$}}}
\def\vecv{{\text{\boldmath$v$}}}
\def\vecw{{\text{\boldmath$w$}}}
\def\vecx{{\text{\boldmath$x$}}}
\def\vectheta{{\text{\boldmath$\theta$}}}
\def\vecchi{{\text{\boldmath$\chi$}}}
\def\scrA{{\mathcal A}}
\def\scrB{{\mathcal B}}
\def\scrC{{\mathcal C}}
\def\scrH{{\mathcal H}}
\def\scrG{{\mathcal G}}
\def\scrI{{\mathcal I}}
\def\scrM{{\mathcal M}}
\def\scrS{{\mathcal S}}
\def\scrU{{\mathcal U}}
\def\scrW{{\mathcal W}}
\def\dim{\operatorname{dim}}
\def\dist{\operatorname{dist}}
\def\SL{\operatorname{SL}}
\def\SO{\operatorname{SO}}
\def\T{\operatorname{T{}}}
\def\Onder#1#2#3#4#5{#1 \setbox0=\hbox{$#1$}\setbox1=\hbox{$#2$}
       \dimen0=.5\wd0 \dimen1=\dimen0 \dimen2=\dp0 \dimen3=\dimen2
       \advance\dimen0 by .5\wd1 \advance\dimen0 by -#4
       \advance\dimen1 by -.5\wd1 \advance\dimen1 by -#4
       \advance\dimen2 by -#3 \advance\dimen2 by \ht1
       \advance\dimen2 by 0.3ex \advance\dimen3 by #5
        \kern-\dimen0\raisebox{-\dimen2}[0ex][\dimen3]{\box1}
       \kern\dimen1}
\newcommand{\GaG}{\Gamma\backslash G}
\newcommand{\sfrac}[2]{{\textstyle \frac {#1}{#2}}}
\newcommand{\fg}{\mathfrak{g}}
\newcommand{\fk}{\mathfrak{k}}
\newcommand{\fa}{\mathfrak{a}}
\newcommand{\fn}{\mathfrak{n}}
\newcommand{\Ad}{\mathrm{Ad}}
\newcommand{\mBR}{{m^{\mathrm{BR}}}}
\newcommand{\mBMS}{{m^{\mathrm{BMS}}}}
\newcommand{\mBRast}{{m^{\mathrm{BR}_*}}}
\newcommand{\Uus}{\mathcal{U}(\upsilon,s)}
\newcommand{\ba}{\backslash}
\newcommand{\bH}{\mathbb H}
\newcommand{\op}{\operatorname}
\newcommand{\Mm}{\scrM}
\newcommand{\la}{\lambda}
\renewcommand{\epsilon}{\varepsilon}
\newcommand{\BR}{\op{BR}}
\newcommand{\BMS}{\op{BMS}}
\newcommand{\br}{\mathbb R}\newcommand{\bms}{m^{\BMS}}
\newcommand{\m}{m}
\begin{document}
\title[Exponential mixing]{Spectral gap and Exponential mixing on geometrically finite hyperbolic manifolds}

\author{Sam Edwards}
\thanks{S.\ E.\ was supported by postdoctoral scholarship 2017.0391 from the Knut and Alice Wallenberg Foundation and
H.\ O.\ was supported by NSF grants}
\address{Department of Mathematics, Yale University, New Haven, CT 06520 }
\email{samuel.edwards@yale.edu}

\author{Hee Oh}
\address{Department of Mathematics, Yale University, New Haven, CT 06520 and Korea Institute for Advanced Study, Seoul, Korea }

\email{hee.oh@yale.edu}



\subjclass[2000]{}

\keywords{}

\begin{abstract} Let $\scrM=\Gamma\ba \bH^{d+1}$ be a geometrically finite hyperbolic manifold with critical exponent exceeding $d/2$. 
We obtain a precise asymptotic expansion of the matrix coefficients for the geodesic flow in $L^2(\T^1(\scrM))$, with exponential error term essentially as good as the one given by the spectral gap for the Laplace operator on $L^2(\scrM)$
 due to Lax and Phillips. Combined with the work of Bourgain, Gamburd, and Sarnak and its generalization by Golsefidy and Varju on expanders, this implies \emph{uniform} exponential mixing for congruence covers of $\scrM$ when $\Gamma$ is a Zariski dense subgroup contained in an arithmetic subgroup
 of $\op{SO}^\circ(d,1)$. \end{abstract}

\maketitle
\tableofcontents
\section{Introduction}
Let $\scrM=\Gamma\ba \bH^{d+1}$ be a geometrically finite hyperbolic manifold, where $\Gamma$ is a discrete subgroup of $G=\SO^\circ(d+1,1)$. We suppose that the critical exponent $\delta$ of $\Gamma$ is strictly bigger than $d/2$.

Denoting by $\Delta$ the negative of  the Laplace operator on $\scrM$, the bottom eigenvalue of $\Delta$ on $L^2(\Mm)$ is known to be simple and given as $\la_0=\delta(d-\delta)$ by Patterson \cite{Pa} and Sullivan \cite{Su}. Lax and Phillips  \cite{LaxPhillips} showed that there are only finitely many eigenvalues of $\Delta$ on $L^2(\Mm)$ in the interval $[\la_0,\frac{d^2}4)$. We denote by $\la_1$ the smallest eigenvalue of $\Delta$ in $(\la_0, \frac{d^2}4)$, and define $s_1=\frac{d}{2}+\sqrt{(\frac{d}{2})^2-\lambda_1}$. Then $s_1$ is the unique number in $(\frac{d}{2},\delta)$ such that
$$0<\delta(d-\delta)=\lambda_0< \lambda_1=s_1 (d-s_1)< d^2/4. $$
If there is no eigenvalue in the open interval $(\la_0, \frac{d^2}4)$,
we set $\la_1=d^2/4$, i.e., $s_1=d/2$.

Denote by $\mathcal G^t$ the geodesic flow on the unit tangent bundle $\T^1(\Mm)$ and by $dx$ the Liouville measure on $\T^1(\Mm)$;
this is a $\mathcal G^t$-invariant Borel measure which is infinite when $\delta<d$.

The main result of this paper is that the Lax-Philips spectral gap, that is,  $\la_1-\la_0$, or equivalently $\delta-s_1$, controls {\it rather precisely} the exponential convergence rate of the correlation function for the geodesic flow $\mathcal G^t$ acting on $L^2(\T^1(\Mm), dx)$:

\begin{thm} \label{expmixing}\label{main} Let $\delta>d/2$, and set $\eta:=\min \{ \delta-s_1, 1\}$. There exists $m>d(d+1)/2$ such that
for any $\epsilon>0$ and functions $\psi_1, \psi_2$ on $\T^1(\Mm)$ with  $\scrS^{m} (\psi_1),\scrS^{m} (\psi_2)<\infty$, we have, as $t\to +\infty$,
\begin{align*}
e^{(d-\delta)t}\int_{\T^1(\scrM)}\psi_1\big(\scrG^t(x)\big)\psi_2(x)\,dx =& \frac{1}{m^{\BMS}(\T^1(\Mm))} m^{\BR}(\psi_1)m^{\BR_*}(\psi_2)
\\&+ O_{\epsilon}\left( e^{(-\eta+\epsilon) t}\scrS^m(\psi_1)\scrS^m(\psi_2)\right),
\end{align*}
where  
\begin{itemize}
\item $m^{\BMS}$ is the Bowen-Margulis-Sullivan measure on $\T^1(\Mm)$;
\item $m^{\BR}$ and $m^{\BR_*}$ are, respectively, the unstable and stable
Burger-Roblin measures on $\T^1(\Mm)$, which are defined compatibly with the choice of $dx$ and $dm^{\BMS}$;
\item $\scrS^m(\psi_i)$ denotes the $L^2$-Sobolev norm of $\psi_i$ of degree $m$ and  the implied constant depends only on $\epsilon$.
\end{itemize}
\end{thm}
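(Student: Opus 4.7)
The plan is to translate the Liouville integral into a matrix coefficient for the right regular representation of $G=\SO^\circ(d+1,1)$ on $L^2(\Gamma\backslash G)$ (with respect to Haar measure) and then to exhaust the Lax--Phillips spectral gap through the theory of complementary-series representations of $G$.

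First I would identify $\T^1(\scrM)\cong\Gamma\backslash G/M$, where $M=Z_K(A)$ and $A=\{a_t\}$ is the geodesic subgroup, and lift $\psi_1,\psi_2$ to right-$M$-invariant smooth vectors $\tilde\psi_1,\tilde\psi_2$ on $\Gamma\backslash G$. Because $dx$ is Haar-induced and $\scrG^t$ is right translation by $a_t$, the left-hand side of the theorem equals
\[
\bigl\langle \pi(a_t)\tilde\psi_1,\tilde\psi_2\bigr\rangle_{L^2(\Gamma\backslash G)}.
\]
The Lax--Phillips theorem combined with the standard bijection between spherical Laplace eigenvalues $\lambda_k=s_k(d-s_k)\in[\lambda_0,d^2/4)$ on $L^2(\scrM)$ and embedded spherical complementary-series representations $\pi_{s_k}\subset L^2(\Gamma\backslash G)$ then produces a $G$-invariant decomposition
\[
L^2(\Gamma\backslash G)=\bigoplus_{k=0}^{n}\pi_{s_k}\;\oplus\;\scrH_{\mathrm{temp}},
\]
with $s_0=\delta>s_1\geq\cdots\geq s_n>d/2$ and $\scrH_{\mathrm{temp}}$ the orthogonal complement whose $a_t$-matrix coefficients decay at the tempered rate.

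I would then treat the three types of contributions. In $\pi_\delta$ the $K$-fixed line is one-dimensional, spanned by the Patterson--Sullivan ground state $\phi_0\in L^2(\scrM)$, and the $a_t$-matrix coefficient of $\phi_0$ is an explicit elementary spherical function with leading asymptotic $c_\delta\,e^{-(d-\delta)t}$; the central identification---using the construction of $\phi_0$ from the Patterson density together with the realisation of $m^{\BR}$ and $m^{\BR_*}$ as conformal densities built from $\phi_0$ on unstable and stable horospheres---gives, after normalization,
\[
\bigl\langle\tilde\psi_1,\phi_0\bigr\rangle\,\overline{\bigl\langle\tilde\psi_2,\phi_0\bigr\rangle}=\frac{m^{\BR}(\psi_1)\,m^{\BR_*}(\psi_2)}{m^{\BMS}(\T^1(\scrM))},
\]
producing the stated main term. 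For each subleading $\pi_{s_k}$ ($k\geq 1$) the contribution is $O(e^{-(d-s_k)t}\scrS^m(\psi_1)\scrS^m(\psi_2))$, so after the factor $e^{(d-\delta)t}$ the worst rate is $e^{-(\delta-s_1)t}$. The tempered remainder is controlled by sharp $K$-finite tempered matrix-coefficient estimates (in the spirit of Cowling--Haagerup) summed over $K$-types with the Sobolev weight $\scrS^m$, yielding decay of order $e^{-t+\epsilon t}$ after the prefactor. Combining these three gives the claimed error $O(e^{(-\eta+\epsilon)t})$ with $\eta=\min\{\delta-s_1,1\}$.

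The hardest step will be the Patterson--Sullivan identification of the rank-one projection onto $\phi_0$ with the product functional $m^{\BR}\otimes m^{\BR_*}/m^{\BMS}(\T^1(\scrM))$ in the geometrically finite setting: because of the parabolic cusps one must control $\phi_0$ and its horospherical conformal densities near parabolic fixed points, where the Patterson density has delicate cusp scaling, and verify that the linear functional $\tilde\psi\mapsto\langle\tilde\psi,\phi_0\rangle$ extends continuously to integration against $m^{\BR}$ \emph{uniformly} against the Sobolev norm $\scrS^m$. A secondary obstacle is promoting the Lax--Phillips gap from the spherical subspace $L^2(\scrM)$ to all $K$-types appearing in $L^2(\Gamma\backslash G)$, which requires a $K$-equivariant refinement of the Lax--Phillips resolvent argument (or an equivalent Casimir-based transfer), together with uniform control of Eisenstein and residual contributions from the cusps.
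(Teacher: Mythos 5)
There is a genuine gap at the heart of the decomposition you propose. You write $L^2(\Gamma\backslash G)=\bigoplus_{k=0}^{n}\pi_{s_k}\oplus\scrH_{\mathrm{temp}}$, with the $\pi_{s_k}$ the spherical complementary series detected by Lax--Phillips, and you treat $\scrH_{\mathrm{temp}}$ as tempered. This is false. The Lax--Phillips theorem concerns only the Laplacian on $L^2(\scrM)=L^2(\Gamma\backslash G)^K$ and hence controls only the \emph{spherical} complementary series $\scrU(1,s)$. It says nothing about the non-spherical complementary series $\scrU(\upsilon,s)$ with $\upsilon\in\hat M$ non-trivial: these have no $K$-fixed vectors, are invisible to the Laplacian, but can occur in $L^2(\Gamma\backslash G)$ with $s$ anywhere in $\scrI_\upsilon\subset(d/2,d-1)$, in particular with $s$ arbitrarily close to $\delta$. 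They do contain $M$-invariant vectors, so they are not orthogonal to your test functions, and your ``tempered remainder'' and its decay rate collapse. This is exactly why the predecessor result \cite{MoOh} imposed $\delta>\max\{d-1,d/2\}$: that hypothesis pushes all non-spherical complementary series below the leading exponent. You flag a related concern at the end as a ``secondary obstacle'' and suggest a $K$-equivariant refinement of Lax--Phillips, but no such refinement is possible: there is no spectral quantity on $\scrM$ whose smallness would exclude these representations, and one should not try to exclude them.

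The step you are missing is the paper's central new idea, which is structurally different. One shows (Proposition \ref{sigmazero}, Corollary \ref{sigmazero3}) that for non-spherical $\scrU(\upsilon,s)$ the leading-term operator $T_{\tau_1}^{\tau_2}C_+(s)$ in the Eisenstein-integral expansion annihilates all $M$-invariant vectors; consequently (Theorem \ref{Minvmcoeffsbdd}, Corollary \ref{Mi}) the $a_t$-matrix coefficients of $M$-invariant vectors in non-spherical complementary series decay at the strictly faster rate $e^{(s-d-\eta_s)t}$, $\eta_s=\min\{2s-d,1\}$, uniformly over compacta in $(d/2,d)$. With this vanishing one need not exclude such representations at all: instead of your two-piece splitting, the proof in Section \ref{thmproof} uses three pieces --- $\scrB_\delta\cong\scrU(1,\delta)$ giving the main term; the direct integral over non-spherical $\scrU(\upsilon,s)$ with $s\in[s_1+r,\delta]$, absorbed into the error by the vanishing phenomenon; and the remainder, which weakly contains no $\scrU(\upsilon,s)$ with $s>s_1+r$ and is handled by the Shalom/Kontorovich--Oh type bound. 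Two smaller issues: your rank-one identification $\langle\tilde\psi_1,\phi_0\rangle\overline{\langle\tilde\psi_2,\phi_0\rangle}=m^{\BR}(\psi_1)m^{\BR_*}(\psi_2)/m^{\BMS}(\T^1(\scrM))$ holds only for $K$-invariant functions; for $M$-invariant ones the projection onto $\scrB_\delta$ has components in every $K$-type, and the paper instead identifies the full Eisenstein leading coefficient with $m^{\BR}(\psi_1)m^{\BR_*}(\psi_2)$ by matching it against Roblin's mixing theorem extended to Sobolev functions (Proposition \ref{Sobmixing}). And the cusp concern you raise is circumvented: Lemma \ref{mMeasSobbd} bounds $|m^{\BR}(f)|\ll\scrS^m(f)$ via the $L^2$-pairing with $\phi_0$ and Sobolev embedding on $K$, with no direct estimation of conformal densities near parabolic fixed points.
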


We note that $|m^{\BR}(\psi_i)|<\infty$ when $\scrS^{m} (\psi_i)<\infty$ (see Lemma \ref{mMeasSobbd}), and hence the main term above is well-defined. For $\psi_i$ compactly supported, the asymptotic formula (without error term) holds for any $\delta>0$, as was obtained by Roblin \cite{Roblin}.

\begin{Rmk}\label{mainthmrmk}
\begin{enumerate}
\item When $\Mm$ has finite volume, i.e., when $\delta=d$, exponential mixing of the geodesic flow  is a classical result due to  Ratner
for $d=1$ and to Moore \cite{Moore} for general $d\ge 1$. Combining Moore's proof with Shalom's trick \cite[proof of Theorem 2.1]{Shalom} yields a rate of mixing $\eta=\min\lbrace d-s_1,1\rbrace$ as in Theorem \ref{expmixing} above. This proof  makes explicit use of the fact \cite{Hirai}
that there are no non-spherical complementary series representations $\scrU(\upsilon,s)$ with $s>d-1$ (see below for notation). Our proof does not require this statement. However, combining the aforementioned fact with our proof improves the rate of mixing for the geodesic flow on finite-volume hyperbolic manifolds to $\eta=\min\lbrace d-s_1, 2\rbrace$. In addition, if there is no non-spherical complementary series
representation  $\scrU(\upsilon,s)$ with $s>s_1+1$ appearing in $L^2(\Gamma\ba G)$, then $\eta$ can be taken
to be $d-s_1$ (see Remark \ref{final} for details).

\item For $d=1$ and $2$, Theorem \ref{expmixing} can be deduced from \cite{BKS} and \cite{Vi}, respectively. For a general $d\ge 1$, the case of $\delta>\max\{d-1, d/2\}$ was obtained in \cite{MoOh}  for {\it{some}} $\eta>0$ which is not explicit. 

\item The main novelties of Theorem \ref{expmixing} are that
it addresses \emph{all} geometrically finite groups with $\delta>d/2$ (even those with cusps), and that it gives an optimal value of $\eta$ with respect to the dependency on the spectral gap $\delta-s_1$. 

\item The order $m$ of Sobolev norm required is in principle completely computable; it may be chosen independently of $\Gamma$, and satisfies $m=O(d^2)$. 

\end{enumerate}
\end{Rmk}

The BMS measure $m^{\BMS}$ is known to be the unique measure of maximal entropy (which is $\delta$)
for the geodesic flow (\cite{Su}, \cite{Otal}). Babillot showed that the geodesic flow is mixing with respect to $m^{\BMS}$ \cite{Ba}.
Theorem \ref{expmixing} is known to imply
 the following exponential mixing for the BMS measure (see \cite[Theorem 1.6]{MoOh} for compactly supported functions, and  \cite[Theorem 1.9]{KelmerOh} for general bounded functions):

\begin{thm}\label{expbmsmixing}
There exist  $\beta>0$ (explicitly computable, depending only on $\eta$ in Theorem \ref{expmixing}) and $m> d(d+1)/2$ such that
for all bounded functions $\psi_1, \psi_2 $ on $\T^1(\scrM)$ supported on the one-neighbourhood of $\mathrm{supp}(\mBMS)$, we have, as $t\to \infty$,
\begin{align*}
\int_{\T^1(\scrM)}\psi_1\big(\scrG^t(x)\big)\psi_2(x)\,d\mBMS(x)&= \frac{1}{m^{\BMS}(\T^1(\Mm))}  \mBMS(\psi_1)\mBMS(\psi_2)
\\&+ O\left( e^{-\beta t}\|\psi_1\|_{C^m}  \| \psi_2\|_{C^m}\right)
\end{align*}
where $\|\psi_i\|_{C^m}$ denotes the $C^m$-norm of $\psi_i$.
\end{thm}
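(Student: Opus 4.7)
The strategy is the standard thickening/smoothing argument, exploiting the local product structure of $\T^1(\scrM)$ along stable horospherical, unstable horospherical, and geodesic flow directions. In a small neighborhood of a point of $\supp(\mBMS)$ this product structure gives compatible decompositions $dx=d\lambda^+\,d\lambda^-\,dt$ and $d\mBMS=d\mu^+\,d\mu^-\,dt$, where $\lambda^{\pm}$ are Lebesgue on (un)stable leaves and $\mu^{\pm}$ are the Patterson--Sullivan type conformal densities; the Burger--Roblin measures interpolate, $d\mBR=d\lambda^+\,d\mu^-\,dt$ and $d\mBRast=d\mu^+\,d\lambda^-\,dt$. Thus $\mBMS$ is obtained from $dx$ by replacing Lebesgue on both transversals by the conformal measures, and $\mBR$ (resp. $\mBRast$) by doing so on only one transversal.

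Given bounded $\psi_1,\psi_2$ supported on the $1$-neighborhood of $\supp(\mBMS)$, for each small $\epsilon>0$ I would construct smooth bump functions $\rho_\epsilon^+, \rho_\epsilon^-$ on $\T^1(\scrM)$ of mass one, concentrated in $\epsilon$-tubes around $\supp(\mBMS)$ in the weak unstable and weak stable directions respectively, with $\scrS^m(\rho_\epsilon^\pm)\ll \epsilon^{-m-O(1)}$. Using H\"older regularity of the $\mu^{\pm}$ densities along the transverse leaves, together with a partition of unity subordinate to a cover of the $1$-neighborhood of $\supp(\mBMS)$ by flow boxes of uniform size, one obtains the two key comparison estimates: first,
\[
\mBR(\psi_1\rho_\epsilon^+)=\mBMS(\psi_1)+O(\|\psi_1\|_{C^m}\epsilon^{\kappa}),
\qquad \mBRast(\psi_2\rho_\epsilon^-)=\mBMS(\psi_2)+O(\|\psi_2\|_{C^m}\epsilon^{\kappa}),
\]
for some $\kappa>0$; and second, for all $t\ge 0$,
\[
\int_{\T^1(\scrM)} (\psi_1\rho_\epsilon^+)(\scrG^t x)\,(\psi_2\rho_\epsilon^-)(x)\,dx
= \int_{\T^1(\scrM)} \psi_1(\scrG^t x)\,\psi_2(x)\,d\mBMS(x)+O(\epsilon^{\kappa}\|\psi_1\|_{C^m}\|\psi_2\|_{C^m}),
\]
the second coming from pairing the thickening in the unstable direction against the $\scrG^t$-image (which in turn disintegrates along stable leaves after flowing).

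Now apply Theorem~\ref{expmixing} to $\psi_1\rho_\epsilon^+$ and $\psi_2\rho_\epsilon^-$: the main term equals $m^{\BMS}(\T^1\scrM)^{-1}\mBMS(\psi_1)\mBMS(\psi_2)$ up to an $O(\epsilon^\kappa)$ error, the remainder from Theorem~\ref{expmixing} is $O(e^{(-\eta+\epsilon')t}\epsilon^{-2m-O(1)})$, and the left-hand side differs from the target by $O(e^{(\delta-d)t}\epsilon^\kappa)$. Optimizing $\epsilon=e^{-\alpha t}$ with $\alpha$ chosen small enough that $\alpha(2m+O(1))<\eta$ and $\alpha\kappa>(\delta-d)\cdot(\text{appropriate factor})$ yields a positive exponent $\beta$ depending only on $\eta$, $m$, and $\kappa$.

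The main obstacle is the non-compactness coming from cusps: the $1$-neighborhood of $\supp(\mBMS)$ is unbounded in the cusp directions, so the smoothing functions $\rho_\epsilon^\pm$, the partition of unity, and the H\"older exponent $\kappa$ must be controlled uniformly down the cusps. This is handled by truncating: outside a sufficiently deep cusp neighborhood the $\mBMS$ mass decays exponentially in cusp depth (via Sullivan's shadow lemma / logarithm law), so one splits the integrals into a compact core where uniform H\"older control of conformal densities is classical, and a cusp tail whose total contribution is negligible compared to the error $e^{-\beta t}$; this is precisely the input where the boundedness and one-neighborhood support assumptions of the statement are used, following the scheme of \cite{KelmerOh}.
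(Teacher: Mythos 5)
The paper does not actually supply a proof of Theorem~\ref{expbmsmixing}: it states that Theorem~\ref{expmixing} is ``known to imply'' this, citing \cite[Theorem 1.6]{MoOh} for compactly supported $\psi_i$ and \cite[Theorem 1.9]{KelmerOh} for the present case of bounded $\psi_i$ supported near $\supp(\mBMS)$. So your task was really to reconstruct the argument in those references. Your overall strategy --- thicken transversally to $\supp(\mBMS)$, apply the $L^2$-mixing result to the thickened functions, and then optimize the thickening scale $\epsilon$ against $t$, dealing with the cusp tails separately --- is indeed the right framework and is what those references do.

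However, the two displayed ``key comparison estimates'' as you have written them do not hold, and the gap is not merely notational. The second one,
$\int (\psi_1\rho_\epsilon^+)(\scrG^t x)(\psi_2\rho_\epsilon^-)(x)\,dx = \int \psi_1(\scrG^t x)\psi_2(x)\,d\mBMS(x)+O(\epsilon^{\kappa}\dots)$
``for all $t\ge 0$'',
cannot be correct as stated: the left-hand side is a Haar matrix coefficient, which (by Theorem~\ref{expmixing}) decays like $e^{(\delta-d)t}$, whereas the right-hand side is a BMS matrix coefficient, which converges to the nonzero constant $\mBMS(\psi_1)\mBMS(\psi_2)/m^{\BMS}(\T^1\scrM)$. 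For $t$ large the two sides have different orders of magnitude, so no $t$-independent $O(\epsilon^\kappa)$ error is possible. The correct comparison must renormalize by $e^{(d-\delta)t}$ (you do write such a factor later in the optimization, but it is inconsistent with the displayed estimate), and even with that factor the comparison requires a genuine disintegration argument, not just a statement.

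The first estimate, $\mBR(\psi_1\rho_\epsilon^+)=\mBMS(\psi_1)+O(\epsilon^\kappa)$, is also problematic for your construction. You define $\rho_\epsilon^+$ as a pointwise multiplier: a ``mass-one'' bump supported in an $\epsilon$-tube in the weak unstable direction. But $\mBR$ integrates Lebesgue measure in the unstable ($u^+$) coordinate, while $\mBMS$ integrates the Patterson--Sullivan measure $\nu$ there; when $\delta<d$ these are mutually singular, and no pointwise multiplication by a Lebesgue-mass-one bump converts one to the other --- such a bump integrates to $1$ against the Lebesgue density of $\mBR$, so it produces $\int\psi_1\,d\mu^-\,ds$, not $\int\psi_1\,d\mu^+\,d\mu^-\,ds=\mBMS(\psi_1)$. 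The thickening used in \cite{MoOh} and \cite{KelmerOh} is not a pointwise multiplication but a translation/averaging of $\psi_i$ over small horospherical pieces, normalized by the Haar measure of the piece, combined with the local product structure of $\mBMS$ and the transversal intersection of expanding horoballs with $\supp(\mBMS)$; the conformal properties of $\nu$ then produce the $e^{(\delta-d)t}$ factor that links the Haar integral to the BMS integral. Your proposal is missing this mechanism, which is the actual heart of the deduction. Your discussion of the cusp tails and the role of the ``one-neighborhood'' hypothesis is sound and matches the scheme of \cite{KelmerOh}, but it does not compensate for the gap in the core thickening argument.
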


\begin{Rmk} 
When $\Gamma$ is convex cocompact, Theorem \ref{expbmsmixing}, for some $\beta>0$ (which is not explicit), follows from the work of Stoyanov \cite{Stoyanov}, which is based on symbolic dynamics and Dolgopyat operators (see also \cite{SW}). This result in turn implies Theorem \ref{expmixing} with implicit $\eta>0$  (see \cite{OhWinter}, \cite{SW}).
\end{Rmk}

Let $\Gamma$ be a Zariski dense subgroup of an arithmetic subgroup $G(\mathbb Z)$ of $G$.
Denote by $\Gamma_q$ the congruence subgroup of $\Gamma$ of level $q$: $\Gamma_q=\{\gamma\in \Gamma: \gamma=e\text{ mod $q$}\}$. When $\pi^{-1}(\Gamma)$ satisfies the strong approximation property for the spin covering map $\pi:\op{Spin}(d+1,1)\to G$, the work of Bourgain-Gamburd-Sarnak \cite{BGS2} and its generalization by Golsefidy-Varju \cite{SV} on expanders imply
that
there exists a finite set $S$ of primes such that the family $\mathcal F:=\{\Gamma_q: \text{$q$ is square-free with no factors in $S$}\}$ has a uniform spectral gap in the sense
that 
\begin{equation} \label{sp} \inf_{\Gamma_q\in \mathcal F} \{ \delta -s_1(q)\} >0,
\end{equation}
where $s_1(q) (d-s_1(q))$ is the second smallest eigenvalue of the Laplacian $\Delta$ on $L^2(\Gamma_q\ba \bH^{d+1})$ \footnote{In view of the recent preprint \cite{HS} and an upcoming work by He and de Saxc\'e, the square-free condition in
 $\mathcal F$ may be removed.}.
This uses the transfer property from the combinatorial spectral gap to the archimedean spectral gap due to \cite{BGS} (see also \cite{Kim}). For certain families of subgroups the uniform spectral gap \eqref{sp} may be explicitly bounded from below, see  \cite{BC,BurgerSarnak,Gamburd,Magee,Sarnak}.

We then have:
\begin{cor}\label{affine}
In Theorems \ref{expmixing} and \ref{expbmsmixing}, the exponents $\eta$ and $\beta$ can be chosen uniformly
over all congruence covers $\T^1(\Gamma_q\ba \bH^{d+1})$, $\Gamma_q\in \mathcal F$.
\end{cor}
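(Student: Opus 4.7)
The plan is to apply Theorems \ref{expmixing} and \ref{expbmsmixing} to each $\Gamma_q \in \mathcal F$ separately and then verify that the resulting rates $\eta_q, \beta_q$, the Sobolev order $m$, and the implicit constants in the error terms can all be chosen independently of $q$. First, since every $\Gamma_q$ has finite index in $\Gamma$, all members of $\mathcal F$ share the same critical exponent $\delta$. Combined with the uniform spectral gap \eqref{sp} coming from the Bourgain--Gamburd--Sarnak and Golsefidy--Varj\'u transfer, this yields
\[
\eta_0 := \inf_{\Gamma_q \in \mathcal F}\bigl(\delta - s_1(q)\bigr) > 0,
\]
so the rate $\eta_q = \min\{\delta - s_1(q),\, 1\}$ produced by Theorem \ref{expmixing} satisfies $\eta_q \geq \eta := \min\{\eta_0,1\} > 0$ uniformly in $q$. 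The Sobolev order is immediate from Remark \ref{mainthmrmk}(4): $m = O(d^2)$ may be chosen independently of $\Gamma$, and in particular uniformly on $\mathcal F$.

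The substantive step is to check that the implicit constants in the $O_\epsilon$-term of Theorem \ref{expmixing} depend on $\Gamma_q$ only through quantities uniformly controlled on $\mathcal F$. I would trace through the proof of Theorem \ref{expmixing} and isolate how each constant enters: the error is assembled from (i) decay estimates for matrix coefficients of non-spherical complementary series representations $\scrU(\upsilon, s)$ of $G$, whose parameters $s$ lie in a bounded range intrinsic to $G$ and whose decay rates depend only on $s$ and $G$; (ii) Sobolev embedding constants on $G$ and on $K$, depending only on $G$ and $d$; and (iii) comparisons between the $m^{\BR}$, $m^{\BR_*}$, $m^{\BMS}$ and Liouville measures, which, under the normalisation convention of the theorem, are equivariant along the covering maps $\Gamma_q \ba \bH^{d+1} \to \Gamma \ba \bH^{d+1}$. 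Provided each ingredient is packaged with explicit dependence only on the spectral parameter and on $G$, no implicit constant can hide a dependence on $q$ or on the covering degree $[\Gamma : \Gamma_q]$. This bookkeeping is the main obstacle: one must reread the representation-theoretic machinery underlying Theorem \ref{expmixing} with an eye toward making every constant a function only of $\delta$, $\eta_0$, $d$, and $m$.

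Finally, Theorem \ref{expbmsmixing} follows from Theorem \ref{expmixing} by the standard smoothing and approximation argument of \cite{MoOh} and \cite{KelmerOh}, in which $\beta$ is produced as an explicit elementary function of $\eta$, $d$, and $m$ (arising from an interpolation between the $C^m$ and Lipschitz scales after approximating $m^{\BMS}$-integration by integration against a thickened Liouville measure). Since $\eta$, $d$, and $m$ have all been secured uniformly in $q$, the resulting $\beta$ is uniform over $\mathcal F$, completing the proposed proof of the corollary.
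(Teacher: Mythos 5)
Your proposal is correct and essentially the same as the paper's intended argument. The paper treats Corollary~\ref{affine} as immediate from the statement of Theorem~\ref{expmixing} together with the uniform spectral gap~\eqref{sp}, and you reconstruct exactly that reasoning: the critical exponent $\delta$ is invariant under passage to finite-index subgroups, the uniform spectral gap yields a uniform lower bound on $\delta - s_1(q)$ and hence a uniform $\eta$, and the Sobolev order $m$ is independent of $\Gamma$ by Remark~\ref{mainthmrmk}(4). One simplification you could make in your middle paragraph: the ``substantive step'' of tracing constants is largely prepackaged, since Theorem~\ref{expmixing} already asserts that the implied constant depends only on $\epsilon$ (for a given $\Gamma$), and inspection of~\eqref{plusbd}--\eqref{maintermbd} shows the remaining hidden dependence is only on $\delta$ and on the compact subinterval $[s_1(q)+r,\delta]\subset(\tfrac{d}{2},d)$ in which the complementary series parameters live; both of these are uniform over $\mathcal F$ once~\eqref{sp} is in force. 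So the full trace-through you propose would succeed, but it reduces to noting that the $\delta$-dependence and the spectral range are $q$-independent. Your treatment of Theorem~\ref{expbmsmixing} is likewise correct: its $\beta$ is produced from $\eta$, $d$, $m$ by the transfer argument cited in the paper, hence inherits uniformity.
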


When $\Gamma$ is convex cocompact, Corollary \ref{affine} was obtained by the second-named author and Winter 
\cite{OhWinter} for $d=1$ and by Sarkar \cite{Sa}
for a general $d\ge 1$, combining Dolgopyat's methods with the expander theory (\cite{BGS2}, \cite{SV}).

Theorems \ref{expmixing}-\ref{expbmsmixing} and Corollary \ref{affine} are known to have many immediate applications in number theory and geometry. To name a few, see  (\cite{DRS}, \cite{EM}, \cite{BO}, \cite{BGS}, \cite{MoOh}) for effective
 counting and affine sieve, \cite{MMO} for the prime geodesic theorem, and \cite{KelmerOh} for shrinking target problems.

\medskip

Fix $o\in \bH^{d+1}$ and $v_o\in \T^1(\bH^{d+1})$ based at $o$. Setting $K=\op{Stab}_G(o)\simeq \SO(d+1)$ and $M=\op{Stab}_G(v_o)\simeq \SO(d)$, we can identify $\Mm=\Gamma\ba G/K$
and $\T^1(\Mm)=\Gamma\ba G/M$. We let $\{a_t\}$ denote the 
one-parameter diagonalizable subgroup of $G$ commuting with $M$ whose right translation on $\Gamma\ba G/M$ 
corresponds to the geodesic flow $\mathcal G^t$ on $\T^1(\Mm)$. As $L^2(\T^1(\Mm))$ can be identified with the space of $M$-invariant functions in $L^2(\Gamma\ba G)$,
Theorem \ref{main} amounts to understanding 
the asymptotic behaviour of the matrix coefficients $\langle a_t \psi_1, \psi_2\rangle$ for $M$-invariant functions
$\psi_1, \psi_2\in L^2(\Gamma\ba G)$.

The non-tempered part of the unitary dual $\hat G$ consists of the trivial representation and the complementary series representations $\scrU(\upsilon,s)$, parameterized by a representation $\upsilon $ in the unitary dual $ \hat M$ and a real number
$s\in \scrI_{\upsilon}$, where $\scrI_{\upsilon}\subset (d/2, d)$ is an interval depending on $\upsilon$. In this parameterization, the complementary series representation $\scrU(\upsilon,s)$ is spherical if and only if $\upsilon\in \hat M$ is trivial, and for $\upsilon$ non-trivial, $\scrI_{\upsilon}$ is contained in $(d/2, d-1)$  \cite{Hirai}; this was the main reason
for the hypothesis $\delta>d-1$ in \cite{MoOh}.

Our main work for the proof of Theorem \ref{main} lies in the detailed analysis of the behavior of matrix coefficients 
 of the complementary series representations $\scrU(\upsilon,s)$.
  We remark that Harish-Chandra's work (\cite{Warner1}, \cite{Warner2}) does not give an asymptotic expansion
 for the matrix coefficients of $\scrU(\upsilon,s)$ for all $s$, as it excludes finitely many (unknown) parameters $s$. Even for those $\scrU(\upsilon, s)$ for which Harish-Chandra's expansion is given, it is hard to use his expansion directly, as it relies on various parameters and recursive formulas.
  
Denoting by $\langle \cdot, \cdot \rangle_{\scrU(\upsilon,s)}$ the {\it unitary} inner product, for each $m\in \NN$ we define a Sobolev norm $\|\cdot\|_{ {\scrS^m(\upsilon,s)}}$ on $\scrU(\upsilon,s)$ as in \eqref{sobnormdef}, and let $\scrS^m(\upsilon,s)$ denote the space of vectors in $\scrU(\upsilon,s)$ with finite $\|\cdot\|_{ {\scrS^m(\upsilon,s)}}$-norm.

Write $\scrU(\upsilon,s)=\oplus_{\tau\in \hat K} \; \scrU(\upsilon,s)_\tau $ for the decomposition into different $K$-types and for $d/2<s < d$, set  $$\eta_s:=\min \{2s-d, 1\}>0.$$

We show the following {\it concrete} asymptotic expansion of matrix coefficients with an optimal rate.
\begin{thm}\label{trivpropo}\label{Umatrixcoeffs0} 
There exists $m\in\NN$ such that for any complementary series representation $\scrU(\upsilon,s)$ containing 
a non-trivial $M$-invariant vector, for all $\vecu,\vecv\in \scrS^m(\upsilon,s)$ 
 and $t\geq 0$,
\begin{align*}
\langle \scrU(\upsilon, s) (a_t)\vecu,\vecv\rangle_{\scrU(\upsilon,s)} =&e^{(s-d)t}\left(\sum_{\tau_1,\tau_2\in{\hat K}}\langle T_{\tau_1}^{\tau_2}C_+(s)\mathsf{P}_{\tau_1}\vecu,\mathsf{P}_{\tau_2}\vecv\rangle_{\scrU(\upsilon,s)}\right)
\\&
\qquad\qquad+O_s\big(e^{(s-d-\eta_s)t}\|\vecu\|_{\scrS^m(\upsilon,s)}\|\vecv\|_{\scrS^m(\upsilon,s)} \big)
\end{align*}
 and the sum
\begin{equation}\label{a1}
\sum_{\tau_1,\tau_2\in{\hat K}}\langle T_{\tau_1}^{\tau_2}C_+(s)\mathsf{P}_{\tau_1}\vecu,\mathsf{P}_{\tau_2}\vecv\rangle_{\scrU(\upsilon,s)}
\end{equation}
converges absolutely. Here $ T_{\tau_1}^{\tau_2}: \scrU(\upsilon,s)_{\tau_1} \to  \scrU(\upsilon,s)_{\tau_2}$
 is given by \eqref{ttt}, $C_+(s)$ is the Harish-Chandra $c$-function given in \eqref{ccc}, and
 $\mathsf{P}_{\tau}$ is the orthogonal projection onto the $K$-type $\tau$. Moreover, the implied constant is uniformly bounded over $s$ in compact subsets of the interval $\scrI_{\upsilon}$.
\end{thm}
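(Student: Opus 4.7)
The plan is to work in the \emph{compact picture}: realize $\scrU(\upsilon,s)$ as the subspace of $L^2(K;V_\upsilon)$ consisting of $M$-equivariant sections, with $G$-action defined via the Iwasawa factorization $g=\kappa(g)\exp(H(g))n$ and unitary inner product provided by the standard intertwining integral. In this model, the $K$-type decomposition is explicit by Frobenius reciprocity, $\scrU(\upsilon,s)|_K\simeq\bigoplus_{\tau\in\hat K}V_\tau\otimes\Hom_M(V_\tau,V_\upsilon)$, and the $\mathsf{P}_\tau$ are genuine orthogonal projections onto the $\tau$-isotypic components. The first step is to reduce to a per-pair computation: for $\vecu\in\scrU(\upsilon,s)_{\tau_1}$ and $\vecv\in\scrU(\upsilon,s)_{\tau_2}$, I would rewrite $\phi^{\upsilon,s}_{\vecu,\vecv}(t):=\langle\scrU(\upsilon,s)(a_t)\vecu,\vecv\rangle_{\scrU(\upsilon,s)}$ via the $NAK$ decomposition of $a_tk$ inside the $K$-integral, so that $a_t$ appears as a concrete dilation of the $N$-coordinate.

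Next, for each such pair I would use the fact that the Casimir acts on $\scrU(\upsilon,s)$ by the scalar $s(s-d)$ to derive a second-order ODE on $\RR_{\geq 0}$ for $\phi^{\upsilon,s}_{\vecu,\vecv}$ whose indicial equation at $t=+\infty$ has roots $s-d$ and $-s$. A Frobenius ansatz yields two linearly independent solutions $\Phi_\pm(\upsilon,s;t)$ asymptotic to $e^{(s-d)t}$ and $e^{-st}$, with subleading corrections proceeding in powers of $e^{-t}$. Matching these against an integral formula valid on all of $\RR_{\geq 0}$ identifies the coefficient of $\Phi_+$ as precisely $\langle T_{\tau_1}^{\tau_2}C_+(s)\vecu,\vecv\rangle_{\scrU(\upsilon,s)}$, with $C_+(s)$ and $T_{\tau_1}^{\tau_2}$ as in \eqref{ccc} and \eqref{ttt}. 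After stripping off the $e^{(s-d)t}$-leading term, two contributions remain: the next descendant of $\Phi_+$, of order $e^{(s-d-1)t}$, and the entire $\Phi_-$-branch, of order $e^{-st}$; the worse of the two gaps $1$ and $2s-d$ gives $\eta_s=\min(2s-d,1)$.

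The final step is summation over $\tau_1,\tau_2\in\hat K$ and verification of absolute convergence of \eqref{a1} together with uniformity in $s$. The operators $T_{\tau_1}^{\tau_2}$ arise from Clebsch--Gordan-type data and grow polynomially in the Casimir eigenvalues of $\tau_1,\tau_2$; the values of $C_+(s)$ are uniformly bounded on compact subsets of $\scrI_\upsilon$. On the other hand, the Sobolev norm $\|\cdot\|_{\scrS^m(\upsilon,s)}$ of \eqref{sobnormdef} is built from elements of $\Ug$ and controls the decay of $\|\mathsf{P}_\tau\vecu\|_{\scrU(\upsilon,s)}$ in inverse powers of the $K$-Casimir eigenvalue of $\tau$. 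Taking $m$ larger than the combined polynomial growth rate of $T_{\tau_1}^{\tau_2}C_+(s)$ (of order $O(d)$ per $K$-type, yielding $m=O(d^2)$ overall, cf.\ Remark~\ref{mainthmrmk}) closes the sum, and uniformity in $s$ then follows from the analytic dependence of every ingredient on $s$ throughout $\scrI_\upsilon$.

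The hard part will be obtaining the per-pair expansion \emph{uniformly} in $s\in\scrI_\upsilon$, bypassing the failure of the classical Harish-Chandra recursion at the resonances $2s-d\in\NN$ (where logarithmic terms could a priori appear in the Frobenius expansion). The cleanest way around this is to avoid the recursion entirely: write $\phi^{\upsilon,s}_{\vecu,\vecv}$ as a single Kunze--Stein-type integral that is manifestly holomorphic in $s$ on all of $\scrI_\upsilon$, then extract the $e^{(s-d)t}$-term directly by a large-$t$ change of variables on the $N$-part of the integrand, bounding the remainder by $e^{(s-d-\eta_s)t}$ through elementary tail estimates. This single move simultaneously rules out logarithms and furnishes constants uniform for $s$ in compacts of $\scrI_\upsilon$.
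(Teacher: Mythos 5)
Your proposal ends up following the same basic route as the paper: rather than pursuing the Harish--Chandra ODE/Frobenius machinery (which you correctly flag as problematic at resonances $2s-d\in\NN$), the paper proves Theorem~\ref{Eisensteinthm} expressing $\mathsf{P}_{\tau_2}U^s(a_t)\mathsf{P}_{\tau_1}$ as a single Eisenstein/Kunze--Stein-type integral over $\overline N$, pulls out the term that converges to $T_{\tau_1}^{\tau_2}C_+(s)$, and bounds the remainder by elementary tail estimates on $\int_0^\infty \min\{1,e^{-t}r\}(1+r^2)^{-s}r^{d-1}\,dr$. Your identification of $\eta_s=\min\{2s-d,1\}$ from the competing rates $e^{-t}$ and $e^{-(2s-d)t}$ is exactly right, and your summation strategy over $K$-types using polynomial growth of $\dim(\tau)$ versus Sobolev decay of $\|\mathsf{P}_\tau\vecu\|$ matches the paper's Lemma~\ref{Fourier}.

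However, there is a genuine gap that your proposal does not engage with: the tail estimate gives the asymptotic expansion with respect to the \emph{$s$-independent} inner product $\langle\cdot,\cdot\rangle_K$ on $L^2(K)$ (this is the paper's Theorem~\ref{Kmatrixcoeffs}), whereas the statement is about $\langle\cdot,\cdot\rangle_{\scrU(\upsilon,s)}$. The unitary structure is $\langle\vecu,\vecv\rangle_{\scrU(\upsilon,s)}=\langle\vecu,\scrA(\upsilon,s)\vecv\rangle_K$ where $\scrA(\upsilon,s)$ acts by scalars $a(\upsilon,s,\tau)$ on each $K$-type. To upgrade the $\langle\cdot,\cdot\rangle_K$-asymptotics to the claim in the theorem one must bound the ratios $a(\upsilon,s,\tau_2)/a(\upsilon,s,\tau_1)$ polynomially in $\tau_1,\tau_2$, \emph{uniformly over $s$ in compacts of $\scrI_\upsilon$}. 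This is the substance of the paper's Lemma~\ref{Gammas} and Proposition~\ref{Abdd1}: deriving the recursion $(s+r_1)a(\upsilon,s,r_1{+}1,r_2)=(d-s+r_1)a(\upsilon,s,r_1,r_2)$ (and the analogous $r_2$-recursion) from the intertwining relation \eqref{intertwining} and the action of $\mathfrak{p}_{\CC}$ on $K$-types, solving it in terms of gamma functions, and controlling the quotients via $\Gamma(s+t)/\Gamma(d-s+t)\asymp_s 1+t^{2s-d}$ together with a case analysis for $d=1,3$ where negative integer shifts occur and the reflection formula is needed. None of this appears in your proposal; remarking that ``the values of $C_+(s)$ are uniformly bounded'' and that the Sobolev norm ``controls decay in inverse powers of the $K$-Casimir eigenvalue'' does not address the fact that the two inner products differ by a factor whose $\tau$-growth must be controlled. (A minor point in the same vein: the Sobolev norm of \eqref{sobnormdef} is built from monomials in a basis of $\mathfrak{k}$, not from $\Ug$.)
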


One of the key observations made in this paper is that if $ \scrU(\upsilon,s)$ is non-spherical and $\vecu\in \scrU(\upsilon,s)$ is $M$-invariant, then the main term \eqref{a1} vanishes (Corollary \ref{sigmazero}):
 for all $\tau_1, \tau_2\in \hat K$,
 $$ T_{\tau_1}^{\tau_2}C_+(s)\mathsf{P}_{\tau_1}\vecu=0.$$ 
Furthermore, there is additional uniformity with respect to the $s$-variable when considering only $M$-invariant vectors.

\begin{cor}\label{Mi}
There exists $m\in\NN$ such that if $\scrU(\upsilon,s)$ is non-spherical, then for all $M$-invariant vectors $\vecu,\vecv\in \scrU (\upsilon,s)$,
\begin{equation*}
 |\langle \scrU(\upsilon, s) (a_t)\vecu,\vecv\rangle_{\scrU(\upsilon,s)}|\ll_s e^{(s-d-\eta_s)t}\|\vecu\|_{\scrS^m(\upsilon,s)}\|\vecv\|_{\scrS^m(\upsilon,s)}.
\end{equation*}
Moreover, the implied constant is uniformly bounded over $s$ in compact subsets of $(\frac{d}{2},d)$.
\end{cor}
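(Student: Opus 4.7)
The plan is to derive this directly from Theorem \ref{Umatrixcoeffs0} by invoking the vanishing observation recorded in Corollary \ref{sigmazero}. I would apply Theorem \ref{Umatrixcoeffs0} to the $M$-invariant vectors $\vecu,\vecv\in\scrU(\upsilon,s)$ to obtain the expansion
\[
\langle\scrU(\upsilon,s)(a_t)\vecu,\vecv\rangle_{\scrU(\upsilon,s)}
= e^{(s-d)t}\!\!\sum_{\tau_1,\tau_2\in\hat K}\!\!\langle T_{\tau_1}^{\tau_2}C_+(s)\mathsf{P}_{\tau_1}\vecu,\mathsf{P}_{\tau_2}\vecv\rangle_{\scrU(\upsilon,s)}
+ O_s\!\left(e^{(s-d-\eta_s)t}\|\vecu\|_{\scrS^m(\upsilon,s)}\|\vecv\|_{\scrS^m(\upsilon,s)}\right).
\]
The non-spherical hypothesis together with the $M$-invariance of $\vecu$ triggers Corollary \ref{sigmazero}, which asserts that $T_{\tau_1}^{\tau_2}C_+(s)\mathsf{P}_{\tau_1}\vecu=0$ for every pair $(\tau_1,\tau_2)\in\hat K\times\hat K$. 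Every summand in the main term therefore vanishes identically, and only the error term survives, giving precisely the claimed estimate.

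The subtler issue is the uniformity of the implied constant over $s$ in compact subsets of the fixed interval $(d/2,d)$ rather than of the $\upsilon$-dependent interval $\scrI_\upsilon$ furnished by Theorem \ref{Umatrixcoeffs0}. Since $\scrI_\upsilon\subset(d/2,d-1)$ for non-spherical $\upsilon$, the inequality is vacuous for $s\in[d-1,d)$ (no such representation exists), so the content is uniformity over compact subsets of $(d/2,d-1)$ as $\upsilon$ simultaneously varies over all $\upsilon\in\hat M\setminus\{\mathrm{triv}\}$ admitting such $s$ in $\scrI_\upsilon$. Because the main term has been eliminated by the vanishing, only the remainder of Theorem \ref{Umatrixcoeffs0} needs to be controlled. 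I would revisit the proof of that theorem and track the $\upsilon$-dependence of the remainder, showing that the relevant quantities — the operator norm of $C_+(s)$ on the participating $K$-types, Casimir eigenvalues, and the combinatorial constants coming from Frobenius reciprocity — remain bounded uniformly in $\upsilon$ provided $s$ stays in a fixed compact subset of $(d/2,d-1)$, with any growth in the highest weight of $\upsilon$ absorbed into $\scrS^m$ at the cost of possibly enlarging $m$.

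I expect the main obstacle to be this uniformity bookkeeping rather than the core argument: the algebraic vanishing is supplied cleanly by Corollary \ref{sigmazero}, so the nontrivial work consists in verifying that the error estimate in Theorem \ref{Umatrixcoeffs0} is genuinely uniform both in $s$ (up to the boundary of $\scrI_\upsilon$ where the complementary series could in principle degenerate) and in $\upsilon$. Since the degeneration at $s=d/2$ corresponds to the unitary principal series (where the bound is already matched by tempered decay) and the degeneration at $s=d-1$ does not occur inside a compact subset of $(d/2,d-1)$, I expect that a careful rerun of the argument of Theorem \ref{Umatrixcoeffs0}, with the $c$-function estimates made explicit, yields the required uniform control.
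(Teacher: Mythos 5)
Your treatment of the main term is essentially the paper's: project onto $K$-types, observe that the main term vanishes, and keep only the error term. One small caveat there: the vanishing $T_{\tau_1}^{\tau_2}C_+(s)\mathsf{P}_{\tau_1}\vecu=0$ is not literally the statement of Proposition \ref{sigmazero}/Corollary \ref{sigmazero3}; those give $T_{\tau_1}^{\tau_2}\vecw=0$ for $M$-invariant $\vecw$, and you need the extra observation (Lemma \ref{css}) that $C_+(s)$ preserves $M$-types, so that $C_+(s)\mathsf{P}_{\tau_1}\vecu$ is again $M$-invariant before the corollary applies. That is easily repaired.

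The genuine gap is in the uniformity claim. You reduce the problem to ``uniformity over compact subsets of $(d/2,d-1)$'' on the grounds that no non-spherical $\scrU(\upsilon,s)$ exists with $s\geq d-1$. That reduction is strictly weaker than what is asserted: a compact subset of $(\frac d2,d)$ may contain $d-1$, so the constant must stay bounded as $s\uparrow d-1$ along non-spherical representations, and uniformity over compact subsets of $(d/2,d-1)$ gives no control there. This is not a technicality one can dismiss --- for general $K$-types the ratio $a(\upsilon,s,\tau_2)/a(\upsilon,s,\tau_1)$ in Lemma \ref{Gammas} contains a factor $\Gamma(d-s+t_4-1)/\Gamma(s+t_4-1)$ that genuinely blows up as $s\to(d-1)^-$ when $t_4=0\neq t_2$, which is exactly why Theorem \ref{trivprop} (your Theorem \ref{Umatrixcoeffs0}) only claims uniformity over compact subsets of $\scrI_\upsilon$, and why the remark after Proposition \ref{Abdd2} emphasizes boundedness as $s\to d-1$. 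The missing idea is the one the paper's proof opens with: since $\vecu,\vecv$ are $M$-invariant and $M\subset K$, each projection $\mathsf{P}_\tau\vecu$, $\mathsf{P}_\tau\vecv$ is $M$-invariant, so only $K$-types containing nontrivial $M$-invariant vectors (i.e.\ $\tau=(t,0)$) contribute to the expansion; for such pairs the offending Gamma factors cancel and Proposition \ref{Abdd2} (fed into the second clause of Proposition \ref{Umatrixcoeffs}) yields the constant uniformly over compact subsets of all of $(\frac d2,d)$. Your proposed ``rerun of the argument with the $c$-function estimates made explicit'' does not identify this cancellation and so would not deliver the stated uniformity, which is precisely what is needed in Section \ref{thmproof} where the non-spherical parameters $s$ in the direct integral can accumulate at $d-1$. (Your concern about uniformity in $\upsilon$ is legitimate but is already absorbed in the paper by Corollary \ref{Tbd} and Lemma \ref{Fourier}.)
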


\noindent{\bf Organization.} We start by recalling in Section \ref{prelims} some key background facts and notation. In Section \ref{compseriessec}, we define the models of the complementary series that we will work in and the Eisenstein integrals that are key to understanding their matrix coefficients. The main technical work is done in Section \ref{matrixcoeffs}; establishing the asymptotic expansion and bounds for matrix coefficients in complementary series representations. Section \ref{Sobmixingsec} is devoted to extending Roblin's mixing result \cite[Theorem 3.4]{Roblin} from compactly supported functions to arbitrary Sobolev functions. Finally, the proof of Theorem \ref{expmixing} is given in Section \ref{thmproof}, the main part of which consists of decomposing the regular representation of $G$ on $L^2(\GaG)$ into irreducible representations and applying the bounds obtained in Section \ref{matrixcoeffs} to each individual component.


\section{Preliminaries}\label{prelims}

Let $d\ge 1$ and $G=\SO^\circ(d+1, 1)$ be the group of orientation-preserving isometries of $\bH^{d+1}$. Let  $\Gamma <G$ be a torsion-free
discrete subgroup of $G$, and let $\Mm=\Gamma\ba \bH^{d+1}$ be the associated hyperbolic manifold.
\subsection{Structure and subgroups of $G$.} 
Let $K=\SO(d+1) <G$ be a maximal compact subgroup, and let $A=\{a_t:t\in \br\}$ be a one-parameter diagonalizable subgroup
and $M=\SO(d)$ the centralizer of $A$ in $K$. 
We can identify $\Mm$ with $\Gamma\ba G/K$ and the unit tangent bundle $\T^1(\Mm)$ with
$\Gamma\ba G/M$  in the way that the geodesic flow on $\T^1(\Mm)$ is given by
 the right translation action of $a_t$ on $\Gamma\ba G/M$.

We denote by $N$ and $\overline N$ the contracting and expanding horospherical subgroups, respectively; i.e.\
$$N=\{g\in G: a_{-t} g a_{t}\to e\text{ as $t\to +\infty$}\};$$
$$\overline
N=\{g\in G: a_{t} g a_{-t}\to e\text{ as $t\to +\infty$}\}.$$
Then  $\overline{N}=\omega N\omega^{-1}$, where $\omega\in N_K(A)$ is such that $\omega a \omega^{-1}=a^{-1}$ for all $a\in A$.

We note that $N$ and $\overline{N}$  are both abelian subgroups, isomorphic to $\br^d$ via the logarithm map,
which are normalized by $AM$. Under this isomorphism $N\simeq \br^d$, conjugation by an element $m\in M=\SO(d)$ is an isometry  and conjugation by $a_t$ corresponds to scaling by $e^{t}$ on $N$, and $e^{-t}$ on $\overline{N}$.
As usual, the Lie algebras of $G$, $K$, $A$, $N$, $\overline N$ are denoted by $\fg$, $\fk$, $\fa$, $\fn$ and $\overline \fn$, respectively. 
This gives
\begin{equation*}
\Ad({a_t})|_{\fn}=e^t \times \mathrm{Id}\quad \text{ and } \quad \Ad({a_t})|_{\overline{\fn}}=e^{-t} \times \mathrm{Id}.
\end{equation*}

We have an Iwasawa decomposition $G=KAN$, where the product map $K\times A\times N\to G$ is a diffeomorphism and
the projection to each individual factor is a  smooth map \cite[Chapter VI.4]{Knapp2}. 
For $g\in G$, we write \begin{equation*}
g=\kappa(g)\exp(H(g)) n_g,
\end{equation*}
where $\kappa(g)\in K$, $H(g)\in \fa$, and $n_g\in N$. 
Letting $\alpha\in \fa_\CC^*$ be the unique element such that 
 $$\alpha (H(a_t))=t,$$ the map  $s\mapsto s\cdot\alpha$
 defines an identification $\CC \simeq \fa_{\CC}^*$. In view of this, we write
\begin{equation*}
e^{sH(g)}=e^{s\cdot\alpha(H(g))} \quad\text{for  $g\in G$ and $s\in \CC$.}
\end{equation*}

\subsection{ Various measures on $\T^1(\Mm)$.}
Let $\Lambda \subset \partial\bH^{d+1}$ denote the limit set of $\Gamma$. We assume that $\Gamma$ is non-elementary, or equivalently,
$\Lambda$ has at least $3$ points. Throughout the paper, we assume that
$${\textit{$\Gamma$ is  geometrically finite}},$$
 that is, the unit neighborhood of the convex core of $\Mm$, given by $\Gamma\ba \text{hull}(\Lambda)$, has finite Riemannian
 volume. We let $\delta$ denote the critical exponent of $\Gamma$, which is known to be equal to the Hausdorff dimension of $\Lambda$. We remark that $\delta=d$ if and only if $\partial \bH^n=\Lambda$ if and only if $\Gamma$ is a lattice in $G$.
 
We fix $o\in \bH^{d+1}$ whose stabilizer is given by $K$, and $v_o\in \T_o(\bH^{d+1})$ the unit vector whose stabilizer is $M$.
Let $\nu_o$ be the Patterson-Sullivan measure on $\Lambda$
which is unique up to a constant multiple; this is characterized by the condition
that $$\gamma^*\nu_o=|\gamma'|^{\delta}\cdot \nu_o$$ for all $\gamma\in \Gamma$, where $|\gamma'|$ denotes
 the derivative of $\gamma$ in the spherical metric on $\partial \bH^{d+1}$ with respect to $o$.
We also let $m_o$ denote the $K$-invariant probability measure on $\partial \bH^{d+1}$.

Let $\pi:\T^1(\bH^{d+1})\to \bH^{d+1}$ be the base point projection. For $u\in \T^1(\bH^{d+1})$, we denote by $u^{\pm}\in \partial \bH^{d+1}$ the forward and the backward endpoints of the geodesic determined by
$u$. Consider the Hopf parameterization of $\T^1(\bH^{d+1})$ given by:
\[
u \mapsto (u^+, u^-, s=\beta_{u^-} (o,\pi(u))),
\]
where  $\beta$ denotes the Busemann function.
Using the Hopf coordinates, the Bowen-Margulis-Sullivan measure $\bms$, 
 the Liouville measure $du$, and the Burger-Roblin measure $\m^{\rm{BR}}$ on $\T^1(\bH^{d+1})$ are respectively given as follows:
\begin{enumerate}
 \item 
$
d\bms(u)= e^{\delta \beta_{u^+}(o,
\pi(u))}\;
 e^{\delta \beta_{u^-}(o, \pi(u)) }\; d\nu_o(u^+) d\nu_o(u^-) ds;
$

\item  $ du=dm^{\op{Liouville}}(u)= e^{d\beta_{u^+}(o,
\pi(u))}\;
 e^{d\beta_{u^-}(o, \pi(u)) }\; dm_o(u^+) dm_o(u^-) ds;
$

\item
$ d\m^{\rm{BR}}(u)= e^{d \beta_{u^+}(o,
\pi(u))}\;
 e^{\delta \beta_{u^-}(o, \pi(u)) }\; dm_o(u^+) d\nu_o(u^-) ds.
 $
\end{enumerate}

The BR measure $m^{\BR}$ is a Lebesgue measure on each $\overline N$-leaf, so we call it the unstable BR measure.
The stable BR-measure $m^{\BR_*}$ is defined similarly to $m^{\BR}$ by exchanging the roles of $u^+$ and $u^-$.

These measures are all left $\Gamma$-invariant, and hence induce Borel measures on $\T^1(\Mm)=\Gamma\ba \T^1(\bH^{d+1})$
for which we use the same notation.
We remark that $m^{\BMS}$ is a finite measure, invariant under the geodesic flow. We will normalize the Patterson-Sullivan measure $\nu_o$ so that $m^{\BMS}(\T^1(\Mm))=1$. The other three measures are infinite, unless
$\delta=d$.

We will sometimes consider these measures as measures on $\Gamma\ba G$ by putting: for $\psi\in C_c(\Gamma\ba G)$,
and for $\star=\BMS,\op{Liouville}, \BR$,
$$m^{\star}(\psi)=m^{\star} \left(\int_M \psi \; dm\right),$$ 
where $dm$ denotes the Haar probability measure on $M$. Note that the Liouville measure,
considered as a measure on $\Gamma\ba G$, is  $G$-invariant; we will denote this by $dg$.

\subsection{ The base eigenfunction $\phi_0$.}
Throughout the article,  we assume
 $$\delta >\frac{d}{2} ,$$
which is a necessary and sufficient condition for the existence of an eigenvalue of $\Delta$ on $L^2(\Mm)$. The smallest eigenvalue of $\Delta$ on $L^2(\Mm)$ is given by $\la_0=\delta (d-\delta)$, and is known to be simple. We denote by $\phi_0$ the the unit eigenfunction in $L^2(\Mm)$ with eigenvalue $\lambda_0$, and call it the \emph{base eigenfunction}. Up to a constant multiple, $\phi_0$ is given by: for $x\in \Mm$,
$$\phi_0(x)=\int_{\xi\in \Lambda} e^{-\delta \beta_{\xi} (o, x)} d\nu_o(\xi).$$
 
The fact that the base eigenfunction $\phi_0$ is square-integrable when $\delta>d/2$ is a key result,
 which allows the unitary representation theory of $G$, specifically the right translation action of $G$ on
 $L^2(\Gamma\ba G)$,  to be used  to prove dynamical results for the geodesic flow.

\begin{thm} [Lax-Phillips \cite{LaxPhillips}] \label{spectralgap}
The intersection of the interval $[0,\frac{d^2}{4})$ with the spectrum of $\Delta$, viewed as an unbounded operator on $L^2(\scrM)$, consists of
a finite set of eigenvalues $\lbrace \lambda_i=s_i(d-s_i)\rbrace_{0\leq i\leq \ell}$, satisfying
\begin{equation*}
0<\lambda_0=\delta(d-\delta)<\lambda_1 \leq \ldots \leq\lambda_\ell < \frac{d^2}{4}.
\end{equation*}
\end{thm}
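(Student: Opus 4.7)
The plan is to combine the geometrically finite structure of $\scrM$ with general self-adjoint spectral theory, breaking the assertion into (a) identification of the essential spectrum and (b) exclusion of eigenvalue accumulation at its infimum. Since $\Gamma$ is geometrically finite, $\scrM$ decomposes as $\scrM=\scrM_{\mathrm{core}}\cup\bigcup_{j=1}^N E_j$, where $\scrM_{\mathrm{core}}$ is relatively compact and each end $E_j$ is either a rank-$k$ cusp ($1\le k\le d$) or a funnel. The finite number of ends is the structural reason the eventual count is finite.

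Step one is the identification $\sigma_{\mathrm{ess}}(\Delta)=[d^2/4,\infty)$. On each end I would pass to adapted hyperbolic coordinates: horocyclic $(y,x)\in[y_0,\infty)\times(\R^d/\Lambda_j)$ with metric $y^{-2}(dy^2+dx^2)$ on a cusp, and geodesic boundary $(t,\omega)\in[t_0,\infty)\times\partial E_j$ with metric $dt^2+\cosh^2(t)\,g_{\partial E_j}$ on a funnel. Fourier expansion along the transverse directions reduces $\Delta|_{E_j}$, after the substitution $u=y^{d/2}v$ (or its funnel analogue) and $r=\log y$, to a direct sum/integral of one-dimensional Schr\"odinger operators of the form $-\partial_r^2+V_\xi(r)+d^2/4$ on $[r_0,\infty)$. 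A direct check shows that each such operator has spectrum in $[d^2/4,\infty)$, with $d^2/4$ attained precisely by the zero-frequency mode. Dirichlet-Neumann bracketing between $\scrM_{\mathrm{core}}$ and the $E_j$'s, together with Weyl's theorem on invariance of essential spectrum under relatively compact perturbations, then yields $\sigma_{\mathrm{ess}}(\Delta)=[d^2/4,\infty)$. Hence $\sigma(\Delta)\cap[0,d^2/4)$ consists of eigenvalues of finite multiplicity whose only possible accumulation point is $d^2/4$.

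Step two is to exclude accumulation at $d^2/4$. I would argue by contradiction: a sequence of distinct eigenvalues $\lambda_n\uparrow d^2/4$ would yield an $L^2$-orthonormal sequence of eigenfunctions $(f_n)$. The eigenvalue equation forces the Fourier expansion of $f_n$ on each end $E_j$ to be, mode-by-mode, the unique $L^2$-solution $y^{s_n}$ (or its funnel analogue) with $s_n=\frac{d}{2}+\sqrt{d^2/4-\lambda_n}\to d/2$. Using these explicit decay profiles, one shows that the $L^2$-mass of $f_n$ in the non-zero Fourier modes on the ends tends to zero uniformly as $\lambda_n\to d^2/4$, so that modulo a finite-dimensional projection onto the zero modes on ends the $f_n$ are essentially supported in $\scrM_{\mathrm{core}}$ with uniformly bounded Sobolev energy. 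Rellich compactness on the relatively compact core then extracts a convergent subsequence in $L^2$, contradicting orthonormality of the slightly perturbed $(f_n)$.

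The main technical obstacle is precisely this uniform ``confinement to the core'' of the $(f_n)$ as $\lambda_n\to d^2/4$, and it is delicate in the presence of non-maximal rank cusps, where continuous Fourier frequencies $|\xi|$ produce one-dimensional potentials $e^{2r}|\xi|^2$ that degenerate in the zero-frequency limit. This is exactly where geometric finiteness of $\Gamma$, rather than just discreteness, is used in an essential way. The classical Lax-Phillips argument handles this via a scattering/wave-equation analysis; a more modern alternative is to establish the meromorphic continuation of the resolvent $(\Delta-s(d-s))^{-1}$ across the critical line $\Re s=d/2$ (following Mazzeo-Melrose) and read off finiteness of eigenvalues below $d^2/4$ from the discreteness of poles in the physical half-plane.
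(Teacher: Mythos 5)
This result is cited in the paper directly from Lax and Phillips, with the identification $\lambda_0=\delta(d-\delta)$ and its simplicity attributed to Patterson and Sullivan; the paper supplies no proof of Theorem \ref{spectralgap}, so there is no in-paper argument to compare your proposal against.

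On its own terms, your two-step outline is the standard modern skeleton, and Step one (identify $\sigma_{\mathrm{ess}}(\Delta)=[d^2/4,\infty)$ by reducing $\Delta$ on the finitely many cusp and funnel ends to one-dimensional Schr\"odinger operators, combined with bracketing and Weyl's essential-spectrum theorem) is sound. The gap is in Step two. After discarding the uniformly exponentially decaying non-zero Fourier modes, what remains on the ends is the zero-mode content of each $f_n$, with radial profile $y^{s_n}$ and $s_n\downarrow d/2$. These profiles do \emph{not} span a fixed finite-dimensional subspace of $L^2(\scrM)$ (there is a different profile for each $n$), and their $L^2$-mass on the ends has no a priori reason to be small: as $\lambda_n\uparrow d^2/4$ the entire mass of $f_n$ could in principle migrate into these critically decaying zero modes, in which case the functions produced by your ``finite-dimensional projection'' tend to zero in $L^2$ and Rellich compactness on $\scrM_{\mathrm{core}}$ yields no contradiction. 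Ruling out this escape of mass into the ends is the genuine content of the Lax--Phillips theorem, and it requires either their scattering-theoretic wave-equation energy estimates or the meromorphic continuation of the resolvent across $\Re s=d/2$ in the Mazzeo--Melrose framework, not a projection-plus-Rellich step; your closing paragraph in effect concedes this. Separately, your sketch does not address $\lambda_0=\delta(d-\delta)$ or its simplicity, which the theorem statement incorporates; these rest on the Patterson--Sullivan density and would need to be cited or proved as an additional input.
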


\subsection{The quasi-regular representation $L^2(\GaG)$.}\label{repdefsec}
We denote $L^2(\GaG)$ the space of square-integrable functions on $\Gamma\ba G$ with respect to $dg$.
The $G$-invariance of $dg$ gives rise to a unitary representation $\big(\rho,L^2(\GaG)\big)$ of $G$, where $\rho$ is given by the right translation:
\begin{equation*}
[\rho(g)f](x)=f(xg)
\end{equation*}
for all $f\in L^2(\GaG),\, g\in G,\, x\in \GaG.$

As a number of inner products on different vector spaces will show up throughout the article, we reserve now $\langle\cdot,\cdot\rangle$ to mean the $\rho(G)$-invariant inner product on $L^2(\GaG)$. All other inner products will have some additional notation to distinguish them.
The subspace of $\rho(K)$-invariant vectors in $L^2(\GaG)$ is denoted $L^2(\GaG)^K$. Similarly, $L^2(\GaG)^M$ denotes the subspace of $\rho(M)$-invariant vectors. We use $L^2(\GaG)^K$ to construct subrepresentations of $\big(\rho,L^2(\GaG)\big)$ as follows: define
\begin{equation*}
L^2(\GaG)_{\mathrm{sph}}:=\text{the closure of }{\lbrace \rho(g)f\,:\, f\in L^2(\GaG)^K,\,g\in G\rbrace}.
\end{equation*}
Then 
\begin{equation*}
\big(\rho,L^2(\GaG)\big)= \big(\rho,L^2(\GaG)_{\mathrm{sph}}\big)\bigoplus \big(\rho,L^2(\GaG)_{\mathrm{sph}}^{\perp}\big);
\end{equation*}
note that both $L^2(\GaG)_{\mathrm{sph}}$ and $L^2(\GaG)_{\mathrm{sph}}^{\perp}$ are $\rho(G)$-invariant closed subspaces.
 Viewing the base eigenfunction $\phi_0$ as an element of $L^2(\GaG)^K$, we define in a similar manner
\begin{equation*}
\scrB_{\delta}:=\text{the closure of the span of }\{\rho(g)\phi_0\,:\, g\in G\}\subset L^2(\GaG)_{\mathrm{sph}},
\end{equation*}
and
\begin{equation*}
 \big(\rho,L^2(\GaG)_{\mathrm{sph}}\big)=(\rho,\scrB_{\delta})\oplus(\rho,\scrW),
\end{equation*}
$\scrW$ being the orthogonal complement of $\scrB_{\delta}$ in $L^2(\GaG)_{\mathrm{sph}}$. It will be of importance later that $(\rho,\scrB_{\delta})$ is an (irreducible) complementary series representation and that Theorem \ref{spectralgap} gives a complete understanding of the complementary series representations contained in $(\rho,\scrW)$. A useful fact (cf. \cite{KontOh}) that we make of in Section \ref{Sobmixingsec} is that
for all $f\in L^2(\GaG)^K$,
\begin{equation}\label{mBRinnerprod}
\mBR(f)=\mBRast(f)= \langle f,\phi_0\rangle.
\end{equation}

Finally, the direct integral decomposition  of $\big(\rho,L^2(\GaG)\big)$ reads
\begin{equation}\label{L2decomp}
\big(\rho,L^2(\GaG)\big)\cong \int_{\mathsf{Z}}^{\oplus} \big(\mathcal{\pi}_{\zeta}, \mathcal{H}_{\zeta}\big)\,d\mu_{\mathsf{Z}}(\zeta),
\end{equation}
where $(\pi_{\zeta}, \mathcal{H}_{\zeta})$ is an irreducible unitary representation of $G$ for $\mu_{\mathsf{Z}}$-a.e.\ $\zeta$ (cf.\ \cite[Corollary 14.9.5]{Wallach}). 
\subsection{Sobolev norms on unitary representations of $K$}\label{Sobnormdefsec}
Given a unitary representation $(\pi,V)$ of $K$ with invariant inner product $(\cdot,\cdot)_V$,
a basis $\lbrace X_j\rbrace$ of $\mathfrak{k}$ and $m\in\NN$, we define a Sobolev norm $\|\cdot\|_{\scrS^m(V)}$ on $V$ by
\begin{equation}\label{sobnormdef}
\|\vecv\|_{\scrS^m(V)}^2:=\|\vecv\|^2_V+\sum_{U} \| d\pi(U)\vecv\|_V^2\qquad \text{ for any } \vecv\in V,
\end{equation}
where $\|\cdot\|_V$ denotes the corresponding norm and the sum runs over all monomials in $\{X_j\}$ of order up to $m$. 

We set $\scrS^m(V):=\{\vecv\in V:\|\vecv\|_{\scrS^m(V)}<\infty\}$. Observe that different choices of the basis $\lbrace X_j\rbrace$ give rise to equivalent norms, and that in the case when $V$ is finite-dimensional, we have $\scrS^m(V)=V$ for any $m\ge 0$. 

Viewing $\big(\rho,L^2(\GaG)\big)$ as a unitary representation of $K$, given a function $f\in L^2(\GaG)$, we let either $\|f\|_{\scrS^m(\GaG)}$ or simply $\scrS^m(f)$ denote the norm $\|f\|_{\scrS^m(L^2(\GaG))}$ as defined by \eqref{sobnormdef} above.
We denote by $\scrS^m(\GaG)$ the space of all functions $f\in L^2(\GaG)$ with $\scrS^m(f)<\infty$.

For $f\in L^2(\Gamma\ba G)$, $m^{\BR}(f)$ may be infinite in general when $\delta<d$. However we have the following lemma: \begin{lem}\label{mMeasSobbd}
If $m>\frac{(d+1)d}{4}$, then for any $f\in\scrS^m(\GaG)$,  
\begin{equation*}
m^{\BR}(f),\; m^{\BR_*}(f) \; \ll\; \scrS^{m}(f).
\end{equation*}
\end{lem}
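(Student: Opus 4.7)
The plan is to dominate $|f|$ by a nonnegative $K$-invariant function whose $L^2$-norm is controlled by $\scrS^m(f)$, and then invoke the identity \eqref{mBRinnerprod}, which is essentially the only global tool at our disposal for controlling the infinite measures $\mBR,\mBRast$ against non-compactly-supported functions. Concretely, set
\[
f^*(x):=\sup_{k\in K}|f(xk)|.
\]
This is manifestly right $K$-invariant and satisfies $|f|\le f^*$ pointwise. If we can show that $\|f^*\|_{L^2(\GaG)}\ll \scrS^m(f)$, then since $f^*\in L^2(\GaG)^K$, the chain
\[
|\mBR(f)|\le \mBR(f^*)=\langle f^*,\phi_0\rangle\le \|f^*\|_{L^2}\|\phi_0\|_{L^2}\ll \scrS^m(f)
\]
finishes the proof, and the argument for $\mBRast$ is identical.

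The bound $\|f^*\|_{L^2}\ll \scrS^m(f)$ reduces to Sobolev embedding on the compact Lie group $K=\SO(d+1)$, which has dimension $d(d+1)/2$. For each fixed $x\in\GaG$ the right-translation map $k\mapsto f(xk)$ lies in $L^2(K)$, and under Fubini the $K$-Sobolev norm from \eqref{sobnormdef}---built from $d\rho(U)$ for $U$ a monomial of order $\le m$ in a basis of $\fk$---is exactly the integral of $\|f(x\,\cdot\,)\|^2_{W^{m,2}(K)}$ over $x$ in a fundamental domain for the right $K$-action on $\GaG$. The classical embedding $W^{m,2}(K)\hookrightarrow C^0(K)$, valid precisely for $m>\dim(K)/2=d(d+1)/4$, then gives a pointwise bound $f^*(x)\le C_m\,\|f(x\,\cdot\,)\|_{W^{m,2}(K)}$ with $C_m$ independent of $x$; squaring and integrating converts this into $\|f^*\|_{L^2}\le C_m\,\scrS^m(f)$.

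The only real conceptual step is the choice of the $K$-invariant majorant---everything else is Sobolev bookkeeping, which is exactly why the dimensional threshold $m>d(d+1)/4$ arises. The single technical wrinkle worth flagging is that $f^*$ need not be compactly supported, so \eqref{mBRinnerprod} is being applied in the extended sense $\mBR(f^*)=\langle f^*,\phi_0\rangle\in [0,\infty]$; this is legitimate since $f^*,\phi_0\ge 0$ and equality propagates from compactly supported $K$-invariant approximants by monotone convergence. The resulting finiteness of $\mBR(f^*)$ then retroactively certifies that $\mBR(f)$ (and analogously $\mBRast(f)$) is indeed well-defined for every $f\in\scrS^m(\GaG)$, which is really the substantive content of the lemma.
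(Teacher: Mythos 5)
Your proof is correct and follows essentially the same route as the paper's: both dominate $|f|$ by the $K$-invariant majorant $\sup_{k\in K}|f(xk)|$, apply \eqref{mBRinnerprod} together with Cauchy--Schwarz, and use Sobolev embedding $W^{m,2}(K)\hookrightarrow C^0(K)$ (valid for $m>\dim(K)/2=d(d+1)/4$) to bound the $L^2$-norm of the majorant by $\scrS^m(f)$. The only addition is your explicit remark about extending \eqref{mBRinnerprod} beyond $C_c$ via monotone convergence, which the paper leaves implicit.
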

\begin{proof} 
Given $f\in C(\GaG)\cap L^2(\GaG)$, define
\begin{equation*}
f_K( x):= \max_{k\in K} |f(xk)| \qquad \text{ for } x\in\GaG.
\end{equation*}
By construction, $f_K$ is $\rho(K)$-invariant and $ |f(x)|\leq f_K(x)$ for all $ x\in \GaG$, hence $|m^{\BR}(f)|\leq m^{\BR}(f_K)$. 
In view of \eqref{mBRinnerprod},  the Sobolev embedding theorem on the compact manifold $K$ gives
\begin{equation*}
|m^{\BR}( f )| \leq m^{\BR}(f_K)=\langle f_K,\phi_0\rangle\leq \|f_K\|\ll \|f\|_{\scrS^m(\GaG)}
\end{equation*}
for all $m>\frac{\dim(K)}{2}$ (cf.\ \cite[Theorem 2.30]{Aubin}). Since $\dim K=\frac{d(d+1)}2$, the chain of inequalities above holds for any $m>\frac{(d+1)d}{4}$.
The claim for $m^{\BR_*}$ follows similarly.
\end{proof}

\section{Complementary series representations and Eisenstein integrals}\label{compseriessec}
In this section we recall the definition and $K$-type structure of the complementary series representations of $G=\SO^\circ(d+1, 1)$, as well as the Eisenstein integral representations of matrix coefficients for these representations. We start by reviewing the representation theory of
 the special orthogonal groups $\SO(n)$.
\subsection{Irreducible representations of $K$ and $M$}\label{SO(n)reps}
The primary reference for the first facts listed below is \cite[pp. 272-277]{BrtD}. Let $m\ge 1$.
The irreducible representations of $\SO(2m)$ are parameterized by $m$-tuples $\tau=(\tau_1,\tau_2,\ldots,\tau_m)\in\ZZ^{m}$ such that
\begin{equation*}
\tau_1\geq\tau_2\geq\ldots\geq \tau_{m-1}\geq \tau_{m-1}\geq |\tau_m|.
\end{equation*}
Similarly, the irreducible representations of $\SO(2m+1)$ are parameterized by $m$-tuples $\tau=(\tau_1,\tau_2,\ldots,\tau_m)\in\ZZ^{m}$ such that
\begin{equation*}
\tau_1\geq\tau_2\geq\ldots\geq \tau_{m-1}\geq \tau_{m}\geq 0.
\end{equation*}
We recall that all irreducible representations of $\SO(2m+1)$ and $\SO(4m)$ are self-dual, and that the dual $\tau^*$ of an irreducible representation $\tau=(\tau_1,\tau_2,\ldots,\tau_{2m+1})$ of $\SO(4m+2)$ is given by $\tau^*=(\tau_1,\tau_2,\ldots,-\tau_{2m+1})$; the dual $\tau^*$ is defined by $\tau^*(k)=\tau(k^{-1})$, acting on the dual (or complex conjugate) of the underlying vector space.

A key result that we will make repeated use of is the branching law for restrictions of representations of $K$ to $M$ which we recall
 (cf.\ \cite[Chapter IX.3]{Knapp2} or \cite[pp. 377-380]{Zelo}). We henceforth call irreducible representations of $K$ or $M$ a $K$ or $M$-type, respectively. 
 Let $\tau$ be a $K$-type.  Then the decomposition of $\tau|_M$ reads
\begin{equation*}
\tau|_M=\bigoplus_{\sigma\in{\hat M}} m_{\sigma,\tau}\,\cdot\, \sigma,
\end{equation*}
where $m_{\sigma,\tau}=1$ if $\sigma=(\sigma_1,\ldots,\sigma_{\lfloor \frac{d}{2}\rfloor})$ satisfies the interlacing property
\begin{equation*}
\tau_1\geq \sigma_1\geq \tau_2\geq \sigma_2\geq\ldots \geq \sigma_{m-1}\geq |\tau_m| \qquad\mathrm{for\;} d=2m-1,
\end{equation*}
and
\begin{equation*}
\tau_1\geq \sigma_1\geq \tau_2\geq \sigma_2\geq\tau_m\geq |\sigma_m| \qquad\mathrm{for\;} d=2m,
\end{equation*}
and $m_{\sigma,\tau}=0$ otherwise. If $m_{\sigma,\tau}=1$, we say that $\tau$ contains $\sigma$, and write $\sigma\subset \tau$. The fact that each $M$-type occurs at most once in $\tau$ will allow us to make repeated use of Schur's lemma in the following manner: if a linear operator on $V_{\tau}$ ($V_{\tau}$ being a vector space on which $\tau$ is realized) commutes with $\tau(M)$, then it acts as a scalar on each $\sigma\subset\tau$. From now on we write $\dim(\tau)$ and $\dim(\sigma)$ for $\dim(V_{\tau})$ and $\dim(V_{\sigma})$, respectively.

We connect the dimensions of $K$-types with the Sobolev norms introduced in Section \ref{Sobnormdefsec}. Firstly, let $(\pi,\scrH)$ be a unitary representation of $G$. For each $\tau\in{\hat K}$, we define
$$\chi_{\tau}(k)=\dim (V_{\tau})\,\cdot\, \mathrm{tr}\big(\tau(k)\big)$$ (the trace being defined with respect to any invariant inner product on any realization of $\tau$)
and
 an operator $\mathsf{P}_{\tau}$ by
\begin{equation*}
\mathsf{P}_{\tau}=\int_K \overline{\chi_{\tau}(k)}\pi(k)\,dk.
\end{equation*}
 Note that $\mathsf{P}_{\tau}$ is the orthogonal projection onto the space $\scrH_{\tau}$, where
\begin{equation*}
\scrH_{\tau}:=\lbrace \vecv\in\scrH\,:\, \mathsf{P}_{\tau}\vecv=\vecv\rbrace.
\end{equation*}
This gives rise to a decomposition of $\scrH$ as the orthogonal direct sum 
\begin{equation*}
\scrH=\bigoplus_{\tau\in{\hat K}} \scrH_{\tau}.
\end{equation*}
If $(\pi,\scrH)$ is irreducible then each $\scrH_{\tau}$ is finite-dimensional. Each $\scrH_{\tau}$ has the property that $\pi(k)|{\scrH_{\tau}}\cong \tau(k) $ for all $k\in K$.  If $\scrH_{\tau}\neq 0$, we say that $\pi$ contains $\tau$, and write $\tau\subset \pi$. There is a similar decomposition of $\scrH$ with respect to $M$-types, and projection operators $\mathsf{P}_{\sigma}=\int_M \overline{\chi_{\sigma}(m)}\pi(m)\,dm$ for $\sigma\in \hat M$. As usual, $\lceil \beta \rceil $ denotes the smallest integer greater than or
equal to $\beta$.
\begin{lem}\label{Fourier}
There exists $m_0\in\NN$ depending only on $K$ such that for any unitary representation $(\pi,V)$ of $K$, $\alpha>0$, $m\in\NN$, and for all $\vecv\in V$, 
\begin{equation*}
\sum_{\tau\subset V} \dim(\tau)^{\alpha}\|\mathsf{P}_{\tau}\vecv\|_{\scrS^{m}(V)}\ll \|\vecv\|_{\scrS^{m+m_0\lceil \frac{\alpha +1}{2}\rceil}(V)},
\end{equation*}
where the implied constant depends only on $K$.
\end{lem}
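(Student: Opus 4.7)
The strategy is to extract polynomial decay in the Casimir eigenvalue to offset the $\dim(\tau)^\alpha$ weights, and then apply Cauchy--Schwarz together with the Parseval identity for the $K$-type decomposition. Fix a basis $\{X_i\}$ of $\fk$ orthonormal with respect to an $\Ad(K)$-invariant inner product, and set $\Omega := 1 - \sum_i X_i^2 \in \mathcal{U}(\fk_\CC)$. Then $\Omega$ is central, so $d\pi(\Omega)$ acts on each $K$-type $V_\tau$ as a positive scalar $\lambda_\tau = 1 + c(\tau) \geq 1$, where $c(\tau)$ is the Casimir eigenvalue of $\tau$. By Weyl's dimension formula together with the comparison $c(\tau) \asymp \|\tau\|^2$, one obtains $\dim(\tau) \leq A\, \lambda_\tau^{N/2}$ with $A$ and $N$ depending only on $K$ (here $N$ is the number of positive roots of $K$).

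Because $\chi_\tau$ is a class function, $\mathsf{P}_\tau$ commutes with $\pi(k)$ for all $k\in K$, and therefore with $d\pi(U)$ for every $U\in \mathcal{U}(\fk_\CC)$. This yields both a Parseval-type identity $\|w\|_{\scrS^m(V)}^2 = \sum_{\tau\subset V} \|\mathsf{P}_\tau w\|_{\scrS^m(V)}^2$ and the key relation $\lambda_\tau^M \|\mathsf{P}_\tau \vecv\|_{\scrS^m(V)} = \|\mathsf{P}_\tau d\pi(\Omega^M)\vecv\|_{\scrS^m(V)}$ for every $M\in\NN$. Splitting $\dim(\tau)^\alpha = (\dim(\tau)^\alpha \lambda_\tau^{-M}) \cdot \lambda_\tau^M$ and applying Cauchy--Schwarz gives
\[
\sum_{\tau \subset V} \dim(\tau)^\alpha \|\mathsf{P}_\tau \vecv\|_{\scrS^m(V)} \leq \Bigl(\sum_{\tau \in \hat K} \dim(\tau)^{2\alpha} \lambda_\tau^{-2M}\Bigr)^{1/2} \|d\pi(\Omega^M)\vecv\|_{\scrS^m(V)}.
\]
The second factor is $\ll \|\vecv\|_{\scrS^{m+2M}(V)}$ because $\Omega^M$ has degree $2M$ in $\mathcal{U}(\fk_\CC)$, while the first factor is bounded using $\dim(\tau) \leq A\,\lambda_\tau^{N/2}$ together with the elementary count that the number of dominant weights with $\lambda_\tau \leq T$ is $O(T^{r/2})$, where $r:=\rank K$. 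This reduces the summability requirement to $2M - \alpha N > r$.

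Finally, choosing $m_0 = m_0(K)$ large enough (e.g.\ $m_0 = 2N + 2r + 2$) that $m_0 \lceil (\alpha+1)/2\rceil > \alpha N + r$ for every $\alpha > 0$, and setting $2M := m_0 \lceil (\alpha+1)/2\rceil$, delivers the claimed bound with the stated Sobolev order $m + m_0 \lceil (\alpha+1)/2\rceil$ on the right. The main technical point is controlling the $\alpha$-dependence of the implied constant that arises when $\Omega^M$ is expanded as a combination of monomials of degree up to $2M$; this is absorbed by choosing $m_0$ strictly above the summability threshold, so that the first Cauchy--Schwarz factor contributes enough extra decay in $\alpha$ to compensate for the constants produced in bounding $\|d\pi(\Omega^M)\vecv\|_{\scrS^m(V)}$.
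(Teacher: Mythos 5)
Your overall framework is the same as the paper's: use a central element of $\mathcal{U}(\fk_\CC)$ whose eigenvalues dominate $\dim(\tau)$, pull powers of it out of $\mathsf{P}_\tau \vecv$, apply Cauchy--Schwarz, and invoke Parseval together with a summability estimate. But there is a genuine gap in your handling of the constant. You bound $\dim(\tau) \leq A\,\lambda_\tau^{N/2}$ with $A>1$ depending on $K$, so the first Cauchy--Schwarz factor becomes $\bigl(\sum_\tau \dim(\tau)^{2\alpha}\lambda_\tau^{-2M}\bigr)^{1/2} \leq A^{\alpha}\bigl(\sum_\tau \lambda_\tau^{\alpha N-2M}\bigr)^{1/2}$. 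Increasing $M$ makes the sum converge and even shrink, but it does nothing to the prefactor $A^\alpha$, which is attached to every term including the trivial representation (where $\lambda_\tau=1$), and which blows up as $\alpha\to\infty$. The final paragraph's claim that ``extra decay in $\alpha$'' from the sum compensates is not correct; the decay and the growth act on different factors. So as written you only get a constant depending on $K$ \emph{and} $\alpha$, which is weaker than the stated lemma (``the implied constant depends only on $K$'').

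The paper avoids this by engineering the central element $\omega_K$ (using the Harish-Chandra isomorphism and highest-weight theory) so that its eigenvalue satisfies the clean, constant-free inequality $|c_\tau|\geq \dim(\tau)^2$. Then $\dim(\tau)^{\alpha}/|c_\tau|^{\ell}\leq \dim(\tau)^{\alpha-2\ell}$, and with $\ell=\lceil(\alpha+1)/2\rceil$ the exponent is $\leq -1$, so the first Cauchy--Schwarz factor is bounded by $\bigl(\sum_\tau \dim(\tau)^{-2}\bigr)^{1/2}$ --- a single universal constant of $K$, independent of $\alpha$. Your proof can be repaired by the same device: replace $\Omega$ by a fixed power $(1-\sum X_i^2)^p$, $p=p(K)$, chosen so that the eigenvalue already dominates $\dim(\tau)^2$; then $m_0=2p$ works and the constant is genuinely $\alpha$-independent. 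A minor secondary point: counting dominant weights with $\lambda_\tau\leq T$ as $O(T^{r/2})$ gives a summability threshold $2M-\alpha N>r/2$, not $>r$; this does not affect your choice of $m_0$, which is more than large enough either way.
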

\begin{proof}
 Though the argument is standard (cf.\ \cite[Chapter 4.4.2]{Warner1} or \cite[Lemmas 10.3 and 10.4]{Knapp1}), we briefly recount it: using the Harish-Chandra isomorphism and  the highest weight theory, we find an element $\omega_K$ in the center of the universal enveloping algebra of $\mathfrak{k}$ such that for each $\tau\in{\hat K}$, $d\tau(\omega_K)$ acts as a scalar $c_{\tau}$ on $\tau$, where $|c_{\tau}|\geq \dim(\tau)^2$. Letting $m_0$ be the order of $\omega_K$, for any $\ell\in \NN$, we have (using the Cauchy-Schwarz inequality),
\begin{align*} 
 \sum_{\tau\subset V} \dim(\tau)^{\alpha}\|\mathsf{P}_{\tau}\vecv\|_{\scrS^{m}(V)}&=\sum_{\tau\subset V} \frac{\dim(\tau)^{\alpha}}{|c_{\tau}|^\ell}\|\mathsf{P}_{\tau}d\pi(\Omega_K^\ell)\vecv\|_{\scrS^{m}(V)}
 \\ & \le \left( \sum_{\tau\subset V} \dim(\tau)^{2\alpha-4\ell}\sum_{\tau\subset V} \|\mathsf{P}_{\tau}d\pi(\Omega_K^\ell)\vecv\|^2_{\scrS^{m}(V)}\right)^{1/2}\\&\ll  \left(\sum_{\tau\subset V} \dim(\tau)^{2\alpha-4\ell}\right)^{1/2}  \|\vecv\|_{\scrS^{m+\ell m_0}(V)}.
\end{align*} 
The sum $\sum_{\tau} {\dim(\tau)^{-2}}$ is finite by \cite[Lemma 13]{Knapp1}, so choosing $\ell=\lceil \frac{ \alpha +1}{2}\rceil$ gives $\sum_{\tau\subset V} \dim(\tau)^{2\alpha-4\ell} <\infty$.
\end{proof}
\subsection{Complementary series representations}\label{compserdefsec}
We will now recall Hirai's classification \cite{Hirai} of the non-tempered unitary dual of $G$ and construct the models of the complementary series that we will work with, see \cite[Chapter 5.5]{Warner1}  (cf.\ also \cite[Sections 3.1-3.3]{MoOh}). 

Given $s\in\CC$, we define the standard representation $U^s$ of $G$ on $L^2(K)$ by
\begin{equation}\label{uss}
[U^s(g)\vecv](k)=e^{-sH(g^{-1}k)}\vecv\big(\kappa(g^{-1}k)\big)
\end{equation}
 for all $\vecv\in L^2(K),\; g\in G,$ and $ k\in K$.
We let $\lambda$ and $\rho$ denote the left- and right-regular representations of $K$ on $L^2(K)$ respectively. Observe that $U^s|_K=\lambda$; for this reason, it is practical to always view $L^2(K)$ as the unitary representation $(\lambda, L^2(K))$ of $K$. The decomposition of $L^2(K)$ into $K$-types reads
\begin{equation*}
L^2(K)=\bigoplus_{\tau\in{\hat K}} L^2(K)_{\tau},
\end{equation*}
where each $L^2(K)_{\tau}$ is isomorphic to $\dim(\tau)$ copies of $\tau$. Given $\upsilon\in{\hat M}$, we define
\begin{align*}
L^2(K:\upsilon):=\left\lbrace \vecv\in L^2(K)\,:\, \int_M\overline{\chi_{\upsilon}(m)}\rho(m)\vecv\,dm=\vecv\right\rbrace.
\end{align*}

The fact that $U^s(g)$ and $\rho(m)$ commute for all $g\in G$ and $m\in M$ shows that $\big(U^s,L^2(K:\upsilon) \big)$ is a representation of $G$; in fact $\big(U^s,L^2(K:\upsilon) \big)$ is isomorphic to $\dim(\upsilon)$ copies of the representation
$\mathrm{ind}_{MAN}^G ( \upsilon\otimes s\otimes 1)$ (cf.\ \cite[Chapter VII]{Knapp1}). Note that $L^2(K:\upsilon)_{\tau}\subset L^2(K)_{\tau}$ for all $\tau\in{\hat K}$. From Hirai's classification of the unitary dual of $G$ \cite{Hirai}, for each $\upsilon\in {\hat M}$, there exists an interval $\scrI_{\upsilon}\subset (\frac{d}{2},d)$ such that if $s\in \scrI_{\upsilon}$, then there exists an irreducible, unitarizable subrepresentation $\scrU(\upsilon,s)$ of $\big(U^s,L^2(K:\upsilon) \big)$. Furthermore, every non-tempered representation of $G$ may be realized as  $\scrU(\upsilon,s)$
for some $\upsilon\in \hat M$ and $s\in \scrI_{\upsilon}$ in this way, and
each $\tau\in \hat K$ that contains $\upsilon$ occurs exactly once in $\scrU(\upsilon,s)$
(cf.\ \cite[Theorem 5.5.1.5]{Warner1} and \cite[Theorem 8.37]{Knapp1}). 

The inner product that makes $\scrU(\upsilon,s)$ a unitary representation is denoted by
$\langle \cdot,\cdot\rangle_{\scrU(\upsilon,s)}$. Note that for $g\in G$, and $\vecv, \vecu \in \scrU(\upsilon,s)$,
$$\langle U^s(g) \vecv, \vecu\rangle_{\scrU(\upsilon,s)}=\langle \scrU(\upsilon,s)(g) \vecv, \vecu\rangle_{\scrU(\upsilon,s)} .$$

We conclude this section by recalling two general facts regarding complementary series representations that let us classify the representation $\scrB_{\delta}$ (cf. Section \ref{repdefsec}). Firstly, the spherical complementary series representation are the $\scrU(1,s)$; i.e.\ precisely those where the representation $\upsilon$ is trivial. Secondly, the Casimir operator $\scrC$ of $G$ acts as the scalar $-s(d-s)$ on the smooth vectors of $\scrU(\upsilon,s)$. Combining these facts with the observation that the restriction of $\scrC$ to right $K$-invariant functions on $G$ is the Laplace-Beltrami operator lets one conclude that the representation $\scrB_{\delta}$ is isomorphic to $\scrU(1,\delta)$.

\subsection{Eisenstein integrals}
Here we develop the Eisenstein integrals needed to obtain the desired asymptotic expansion of matrix coefficients
(cf.\ \cite[Section 3.3]{MoOh}, \cite[Chapter 6]{Warner2}). Before starting, we make the following elementary but important observation:
\begin{lem}\label{Mzerolem} If $\sigma\in{\hat M}\setminus\lbrace\upsilon^*\rbrace$, then
for any $\vecv\in L^2(K:\upsilon)_{\sigma}$,  we have
$$\vecv(m)=0\quad\text{ for all $m\in M$.}$$
\end{lem}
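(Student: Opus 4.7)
My plan is to restrict $\vecv$ to the subgroup $M \subset K$ and apply the Peter--Weyl theorem for the two-sided action of $M$ on $L^2(M)$. The key point is that $L^2(K:\upsilon)_\sigma$ is cut out by two commuting isotypic conditions for $M$: by the defining condition of $L^2(K:\upsilon)$, the vector $\vecv$ is $\upsilon$-isotypic under the right-regular action $\rho|_M$; and by the subscript $\sigma$, the vector $\vecv$ is $\sigma$-isotypic under the left-regular action $\lambda|_M$, where $L^2(K)$ is viewed as a $K$-representation via $\lambda$ (as in Section \ref{compserdefsec}).

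First I would set $f(m) := \vecv(m)$ for $m \in M$ and record how the two conditions restrict to the subgroup: for any $m_0\in M$,
\begin{equation*}
[\rho(m_0)\vecv](m) = f(mm_0)\qquad\text{and}\qquad [\lambda(m_0)\vecv](m) = f(m_0^{-1}m).
\end{equation*}
Thus $f \in L^2(M)$ lies simultaneously in the $\upsilon$-isotypic component under the right-regular $M$-action and in the $\sigma$-isotypic component under the left-regular $M$-action.

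Next I would invoke the Peter--Weyl decomposition
\begin{equation*}
L^2(M) \;\cong\; \bigoplus_{\tau \in \hat M} V_\tau^* \otimes V_\tau,
\end{equation*}
where $\lambda$ acts on the first factor through the contragredient $\tau^*$ (via $\xi \mapsto \xi\circ \tau(m_0)^{-1}$) and $\rho$ acts on the second factor through $\tau$. A function that is $\upsilon$-isotypic on the right and $\sigma$-isotypic on the left must lie in the summand with $\tau \cong \upsilon$ and $\tau^* \cong \sigma$, forcing $\sigma = \upsilon^*$. Since we assume $\sigma \neq \upsilon^*$, this summand does not contribute, so $f \equiv 0$, i.e., $\vecv(m) = 0$ for every $m \in M$. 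I do not foresee a serious obstacle: the only care needed is the bookkeeping of contragredient representations, which matches the explicit duality conventions recalled in Section \ref{SO(n)reps}, and the observation that the inclusion $L^2(K:\upsilon) \hookrightarrow L^2(K)$ is $(M\times M)$-equivariant so that the two isotypic conditions genuinely restrict to the analogous conditions on $L^2(M)$.
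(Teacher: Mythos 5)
Your proof is correct, and it reaches the conclusion by a slightly different mechanism than the paper. Both arguments exploit the same two pieces of data — that $\vecv$ is $\upsilon$-isotypic under the right $M$-action $\rho|_M$ (the defining condition of $L^2(K:\upsilon)$) and $\sigma$-isotypic under the left $M$-action $\lambda|_M$ (the subscript $\sigma$) — but the paper never leaves $L^2(K)$: it starts from the identity $\vecv=\int_M \overline{\chi_\upsilon(m_1)}\rho(m_1)\vecv\,dm_1$, evaluates both sides at $m\in M$, and after a change of variables ($m_1\mapsto m^{-1}m_1 m$ followed by $m_1\mapsto m_1^{-1}$, using conjugation-invariance of characters and $\overline{\chi_\upsilon(m_1^{-1})}=\overline{\chi_{\upsilon^*}(m_1)}$) obtains the pointwise identity $\vecv(m)=[\mathsf P_{\upsilon^*}\vecv](m)$; orthogonality of $\lambda|_M$-isotypic components then forces $\mathsf P_{\upsilon^*}\vecv=0$. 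You instead pass to the restriction $f=\vecv|_M\in L^2(M)$, observe that the restriction map is $(M\times M)$-equivariant so both isotypic conditions descend, and invoke the Peter--Weyl decomposition of $L^2(M)$ to see that the right-$\upsilon$ and left-$\sigma$ summands intersect trivially unless $\sigma=\upsilon^*$. The two proofs are essentially equivalent in content (the paper's change-of-variables computation is Schur orthogonality/Peter--Weyl in concrete form), but yours is the more structural reading, while the paper's is self-contained and avoids passing to $L^2(M)$ or quoting the Peter--Weyl theorem. One cosmetic point worth noting in either version: evaluating $\vecv$ at points of $M$ implicitly uses that the vectors to which the lemma is applied lie in a single $K$-type (hence are continuous matrix coefficients), which holds in every application in the paper.
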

\begin{proof} Since $\vecv\in L^2(K:\upsilon)$, we have $\vecv=\int_M \rho(m)\vecv \overline{\chi_{\upsilon}(m)}\,dm$. Hence for any $m\in M$,
\begin{align*}
\vecv(m)&=\int_M \vecv(mm_1)\overline{\chi_{\upsilon}(m_1)}\,dm_1\\
&=\int_M \vecv(m_1m)\overline{\chi_{\upsilon}(m^{-1}m_1m)}\,dm_1
\\&=\int_M \vecv(m_1^{-1}m)\overline{\chi_{\upsilon}(m_1^{-1})}\,dm_1\\&=\int_M \vecv(m_1^{-1}m)\overline{\chi_{\upsilon^*}(m_1)}\,dm_1
\\&=[\mathsf{P}_{\upsilon^*}\vecv](m).
\end{align*}
Since  $\vecv\in L^2(K:\upsilon)_{\sigma}$ for $\sigma\in{\hat M}\setminus\lbrace\upsilon^*\rbrace$,
 the orthogonality of the $M$-types of $L^2(K)$ gives $\mathsf{P}_{\upsilon^*}\vecv=0$, yielding the claim.
\end{proof}
Before stating the main result of this section, we introduce some more notation. Firstly, we let $\langle\cdot,\cdot\rangle_K$ denote the usual inner product on $L^2(K)$. The corresponding norm on $L^2(K)$ is denoted $\|\cdot\|_{K}$, and similarly for the operator norm defined with respect to it. 

In the rest of this section, we fix a complementary series representation $\scrU(\upsilon,s)$, $s\in \scrI_{\upsilon}$.
 For each $K$-type $\tau$ of $\scrU(\upsilon,s)$ we define a vector $\vecchi_{\tau}\in \scrU(\upsilon,s)_{\tau}$ by 
\begin{equation}\label{Projchi}
\vecchi_{\tau}=\sum_{i=1}^{\dim(\tau)} \overline{\vecv_i(e)}\,\vecv_i,
\end{equation}
where $\lbrace\vecv_i\rbrace_{i=1}^{\dim(\tau)}$ is an orthornormal basis of $\scrU(\upsilon,s)_{\tau}$ (recall that $\tau$ has multiplicity one in $\scrU(\upsilon,s)$). Note that $\vecchi_{\tau}$ is independent of the choice of basis, and  for all $ \vecv\in\scrU(\upsilon,s)_{\tau}$ and $ k\in K$, we have
\begin{equation}\label{evalinnerprod}
\vecv(k)=\langle \vecv, U^s(k)\vecchi_{\tau}\rangle_K.
\end{equation}

\begin{lem}\label{Mvint} Let $\tau$ be a $K$-type of $ \scrU(\upsilon,s)$. 
\begin{enumerate}
\item For all $\vecv\in \scrU(\upsilon,s)_{\tau}\cap \scrU(\upsilon,s)_{\upsilon^*}$, we have
\begin{equation*}
\int_M |\vecv(m)|^2\,dm=\frac{\dim(\tau)\|\vecv\|_{K}^2}{\dim(\upsilon)}.\end{equation*}
\item $\int_M |\vecchi_{\tau}(m)|^2\,dm=\frac{\dim(\tau)^2}{\dim (\upsilon)}$.
\end{enumerate}
\end{lem}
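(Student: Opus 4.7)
The plan is to deduce both identities from two successive applications of Schur orthogonality---first on $K$ to fix the normalization of $\vecchi_\tau$, then on $M$ to compute the integrals---using the reproducing identity \eqref{evalinnerprod} and Lemma \ref{Mzerolem} as the bridge between these compact-group computations and evaluations of elements of $\scrU(\upsilon,s)_\tau$ at points of $M$.

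I would first establish the normalization $\|\vecchi_\tau\|_K^2=\dim(\tau)$. For any $\vecv\in\scrU(\upsilon,s)_\tau$, \eqref{evalinnerprod} and the fact that $U^s|_K=\lambda$ give
\begin{equation*}
\|\vecv\|_K^2=\int_K|\vecv(k)|^2\,dk=\int_K|\langle \vecv,\lambda(k)\vecchi_\tau\rangle_K|^2\,dk,
\end{equation*}
and since $\big(\lambda,\scrU(\upsilon,s)_\tau\big)$ is an irreducible unitary $K$-representation of type $\tau$, Schur orthogonality evaluates the right-hand side as $\tfrac{1}{\dim(\tau)}\|\vecv\|_K^2\|\vecchi_\tau\|_K^2$. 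Cancelling $\|\vecv\|_K^2$ yields the claim.

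Next I would show that $\vecchi_\tau\in\scrU(\upsilon,s)_\tau\cap\scrU(\upsilon,s)_{\upsilon^*}$. Specializing \eqref{evalinnerprod} to $k=e$ identifies $\vecchi_\tau$ as the Riesz representative on $\scrU(\upsilon,s)_\tau$ of the evaluation functional $\vecv\mapsto\vecv(e)$. By Lemma \ref{Mzerolem}, every $\vecw\in L^2(K:\upsilon)_\sigma$ with $\sigma\neq\upsilon^*$ satisfies $\vecw(m)=0$ for all $m\in M$, hence in particular $\vecw(e)=0$. Therefore $\vecchi_\tau$ is orthogonal to every $M$-isotypic component of $\scrU(\upsilon,s)_\tau$ other than the $\upsilon^*$-component, so it lies in that component.

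Part~(1) is then a second Schur computation, now on $M$. For $\vecv\in\scrU(\upsilon,s)_\tau\cap\scrU(\upsilon,s)_{\upsilon^*}$, \eqref{evalinnerprod} gives $\vecv(m)=\langle\vecv,\lambda(m)\vecchi_\tau\rangle_K$, and by the previous step both entries of this inner product lie in $\scrU(\upsilon,s)_\tau\cap\scrU(\upsilon,s)_{\upsilon^*}$. Since the branching from $K$ to $M$ is multiplicity-free, this intersection is an irreducible $M$-module of type $\upsilon^*$ and dimension $\dim(\upsilon^*)=\dim(\upsilon)$, so Schur orthogonality on $M$ yields
\begin{equation*}
\int_M|\vecv(m)|^2\,dm=\frac{\|\vecv\|_K^2\,\|\vecchi_\tau\|_K^2}{\dim(\upsilon)}=\frac{\dim(\tau)}{\dim(\upsilon)}\|\vecv\|_K^2.
\end{equation*}
Part~(2) is then immediate by taking $\vecv=\vecchi_\tau$, which is permissible since $\vecchi_\tau$ was shown to lie in $\scrU(\upsilon,s)_\tau\cap\scrU(\upsilon,s)_{\upsilon^*}$. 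I anticipate no substantive obstacle: the content of the lemma is essentially bookkeeping for the Peter--Weyl orthogonality relations on $K$ and $M$, and the only step requiring any thought is locating $\vecchi_\tau$ inside the $\upsilon^*$-isotypic, which is cleanest to obtain via Lemma \ref{Mzerolem} and the Riesz interpretation of $\vecchi_\tau$ rather than by unwinding an explicit tensor-product realization of $\scrU(\upsilon,s)_\tau$.
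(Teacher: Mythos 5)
Your proof is correct and follows essentially the same route as the paper: identify $\vecchi_\tau$ as the $\upsilon^*$-isotypic Riesz representative of evaluation at the identity via Lemma~\ref{Mzerolem}, then apply Schur orthogonality on $M$. The one place you add detail that the paper leaves implicit is the normalization $\|\vecchi_\tau\|_K^2=\dim(\tau)$, which the paper asserts directly in \eqref{Wmbasissum} after completing to a basis of $\scrU(\upsilon,s)_\tau$, whereas you derive it by a preliminary Schur computation on $K$ — a cleaner way to justify that step.
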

\begin{proof}
Let $\lbrace \vecv_j\rbrace$ be an orthonormal basis  of $\scrU(\upsilon,s)_{\tau}\cap \scrU(\upsilon,s)_{\upsilon^*}$ with respect to $\langle \cdot,\cdot\rangle_K$. Using \eqref{Projchi} and
the Schur orthogonality relations, we get
\begin{align*}
\int_M |\vecv(m)|^2\,dm &=\int_M |\langle\vecv,U^s(m)\vecchi_{\tau} \rangle_K|^2\,dm
\\&=\sum_{i,j} \overline{\vecv_i(e)}\vecv_j(e)\int_M \langle U^s(m)\vecv,\vecv_i \rangle_K\overline{\langle U^s(m)\vecv,\vecv_j \rangle_K}\,dm
\\& =\frac{\|\vecv\|_{K}^2\sum_i |\vecv_i(e)|^2}{\dim(\upsilon)}.
\end{align*}
Complete $\lbrace \vecv_j\rbrace$ into an
orthonormal basis  $\lbrace \vecv_j\rbrace \cup \lbrace \vecw_i\rbrace $ for all of $\scrU(\upsilon,s)_{\tau}$.
Note that $\vecw_i(e)=0$ for all $i$  by Lemma \ref{Mzerolem}.
Therefore we have
\begin{equation}\label{Wmbasissum}
\sum_j |\vecv_j(e)|^2=\sum_j |\vecv_j(e)|^2+\sum_i |\vecw_i(e)|^2=\dim(\tau).
\end{equation} 
Hence the claim (1) follows. Claim (2) follows from (1).
\end{proof}

For $K$-types $\tau_1, \tau_2$ of $\scrU(\upsilon,s)$, we define an operator
$$ T_{\tau_1}^{\tau_2}: \scrU(\upsilon,s)_{\tau_1}\to \scrU(\upsilon,s)_{\tau_2}$$ by
\begin{equation}\label{ttt}
T_{\tau_1}^{\tau_2}\vecv=\int_M \vecv(m) U^s(m)\vecchi_{\tau_2}\,dm\qquad \text{for all } \vecv\in\scrU(\upsilon,s)_{\tau_1}.
\end{equation}

We have the following interpretation of the Eisenstein integral (a similar formula appears in \cite[Theorem 3.4]{MoOh}): \begin{thm}\label{Eisensteinthm}
For any $K$-types $\tau_1$, $\tau_2$ of $\scrU(\upsilon,s)$ and $g\in G$, we have
\begin{equation*}
\mathsf{P}_{\tau_2}U^s(g)\mathsf{P}_{\tau_1}=\int_K e^{(s-d)H(gk)} U^s\big(\kappa(gk)\big) T_{\tau_1}^{\tau_2} U^s(k^{-1})\,dk,
\end{equation*} 
where $\mathsf{P}_{\tau_2}U^s(g)\mathsf{P}_{\tau_1}$ is viewed as an operator from $\scrU(\upsilon,s)_{\tau_1}$ to $\scrU(\upsilon,s)_{\tau_2}$.
\end{thm}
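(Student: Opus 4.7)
The plan is to verify the operator identity by applying both sides to an arbitrary vector $\vecv\in\scrU(\upsilon,s)_{\tau_1}$ and matching the resulting elements of $\scrU(\upsilon,s)_{\tau_2}\subset L^2(K)$ pointwise on $K$. Since $\mathsf{P}_{\tau_2}U^s(g)\vecv$ lies in $\scrU(\upsilon,s)_{\tau_2}$, the reproducing identity \eqref{evalinnerprod} gives
\begin{equation*}
[\mathsf{P}_{\tau_2}U^s(g)\vecv](k_0) \;=\; \langle \mathsf{P}_{\tau_2}U^s(g)\vecv,\, U^s(k_0)\vecchi_{\tau_2}\rangle_K \;=\; \langle U^s(g)\vecv,\, U^s(k_0)\vecchi_{\tau_2}\rangle_K,
\end{equation*}
the second equality using that $\mathsf{P}_{\tau_2}$ is self-adjoint and fixes $U^s(k_0)\vecchi_{\tau_2}$.

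First I unfold the left-hand side as an $L^2(K)$-integral via \eqref{uss} and apply the substitution $k\mapsto \kappa(gk)$, which is a self-bijection of $K$ with inverse $k\mapsto\kappa(g^{-1}k)$ and for which the standard Iwasawa formula gives $dk = e^{-dH(gk')}\,dk'$. The Iwasawa relations $\kappa(g^{-1}\kappa(gk')) = k'$ and $H(g^{-1}\kappa(gk')) = -H(gk')$ then collapse the integrand, and the symmetry $\overline{\vecchi_{\tau_2}(k)} = \vecchi_{\tau_2}(k^{-1})$---a consequence of the Hermitian symmetry and $K$-biinvariance of the reproducing kernel of $\scrU(\upsilon,s)_{\tau_2}$---yields
\begin{equation*}
[\mathsf{P}_{\tau_2}U^s(g)\vecv](k_0) \;=\; \int_K e^{(s-d)H(gk)}\vecv(k)\,\vecchi_{\tau_2}\!\bigl(\kappa(gk)^{-1}k_0\bigr)\,dk.
\end{equation*}

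Next I expand the right-hand side. Using $U^s|_K = \lambda$ and the formula $[U^s(k^{-1})\vecv](m) = \vecv(km)$, the definition \eqref{ttt} of $T_{\tau_1}^{\tau_2}$ together with the fact that left translation by $\kappa(gk)\in K$ is given by pre-composition with $\kappa(gk)^{-1}$ rewrites the claimed right-hand side applied to $\vecv$, evaluated at $k_0$, as
\begin{equation*}
\int_K\!\int_M e^{(s-d)H(gk)}\,\vecv(km)\,\vecchi_{\tau_2}\!\bigl(m^{-1}\kappa(gk)^{-1}k_0\bigr)\,dm\,dk.
\end{equation*}
The crucial step is to collapse this double integral back to the single integral above. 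Interchanging the order of integration and substituting $k\mapsto km^{-1}$ in the inner $K$-integral leaves $dk$ invariant by Haar invariance; moreover, because $m\in K$ with $M$ normalizing $N$ and commuting with $A$, writing $gkm^{-1} = \kappa(gk)a(gk)n(gk)m^{-1} = \bigl(\kappa(gk)m^{-1}\bigr)\cdot a(gk)\cdot\bigl(mn(gk)m^{-1}\bigr)$ identifies this as the Iwasawa decomposition, giving $H(gkm^{-1}) = H(gk)$ and $\kappa(gkm^{-1}) = \kappa(gk)m^{-1}$. Consequently the factor $m^{-1}$ inside $\vecchi_{\tau_2}$ cancels, the integrand becomes $m$-independent, and $\int_M dm = 1$ delivers the single integral from the previous paragraph.

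I do not anticipate a serious obstacle: the proof is essentially careful bookkeeping, combining the reproducing formula \eqref{evalinnerprod}, the Iwasawa measure-transformation identity for $k\mapsto\kappa(gk)$, and the compatibility of $\kappa$ and $H$ with right $M$-translation. The only delicate point is verifying the latter compatibility, which is what makes the $m$-variable inside $\vecchi_{\tau_2}$ drop out and permits the reduction from the double integral to the single one.
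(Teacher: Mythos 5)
Your proof is correct, and it takes a genuinely different route from the paper's. The paper first establishes, via the change of variables $k\mapsto\kappa(g^{-1}k)$ and the reproducing identity \eqref{evalinnerprod}, the intermediate operator identity
\begin{equation*}
\mathsf{P}_{\tau_2}U^s(g)\mathsf{P}_{\tau_1}=\int_K e^{(s-d)H(gk)}\,U^s\big(\kappa(gk)\big)\,T_0\,U^s(k^{-1})\,dk,\qquad T_0\vecv=\vecv(e)\vecchi_{\tau_2},
\end{equation*}
and then upgrades $T_0$ to its $M$-average $T_{\tau_1}^{\tau_2}=\int_M U^s(m)T_0U^s(m^{-1})\,dm$ by writing $g=k_1a_tk_2$ in Cartan form, inserting an $\int_M\,dm$ using $M=Z_K(A)\subset K$ to commute $U^s(m)$ past $U^s(a_t)$ and the projections, and changing variables in $k$. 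You avoid both the intermediate operator $T_0$ and the Cartan decomposition: after converting each side to a scalar integral via \eqref{evalinnerprod} and the symmetry $\overline{\vecchi_{\tau_2}(k)}=\vecchi_{\tau_2}(k^{-1})$ (which indeed follows from $\vecchi_{\tau_2}(k)=\langle\vecchi_{\tau_2},U^s(k)\vecchi_{\tau_2}\rangle_K$ and Hermitian symmetry), you collapse the $K\times M$ double integral on the right by the Haar substitution $k\mapsto km^{-1}$ combined with the right-$M$-equivariance of the Iwasawa factors, namely $H(gkm^{-1})=H(gk)$ and $\kappa(gkm^{-1})=\kappa(gk)m^{-1}$, both consequences of $M$ centralizing $A$ and normalizing $N$. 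This is the same algebraic input that powers the paper's Cartan step, but applied directly at the level of the integrand, which lets you treat arbitrary $g\in G$ uniformly and gives a somewhat shorter argument. Two minor points of presentation: the phrase ``apply the substitution $k\mapsto\kappa(gk)$'' is a little loose---what you actually do is set $k'=\kappa(g^{-1}k)$, equivalently $k=\kappa(gk')$, yielding $dk=e^{-dH(gk')}\,dk'$---and the symmetry of $\vecchi_{\tau_2}$ is most transparently seen from $\vecchi_{\tau_2}(k)=\langle\vecchi_{\tau_2},U^s(k)\vecchi_{\tau_2}\rangle_K$ rather than invoking reproducing-kernel language; neither affects correctness.
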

\begin{proof}
Following \cite[Theorem 6.2.2.4, pp. 42-43]{Warner2}, the key fact is that for any $g\in G$,
$$dk=e^{d\, H(gk)}d \big(\kappa (gk)\big).$$
 Let $\vecv\in \Uus_{\tau_1}$ and $\vecw\in \Uus_{\tau_2}$. Then
\begin{align*}
\langle U^s(g)\vecv,\vecw\rangle_K &=\int_K e^{-sH(g^{-1}k)}\vecv\big(\kappa(g^{-1}k)\big)\overline{\vecw(k)}\,dk
\\ &=\int_K e^{-sH(g^{-1}k)}\vecv\big(\kappa(g^{-1}k)\big)\overline{\vecw(k)}\,e^{d\,H(g^{-1}k)}\,d\big(\kappa (g^{-1}k)\big).
\end{align*}
We now carry out the change of variables $\tilde{k}=\kappa (g^{-1}k)$; this gives $k=\kappa(g\tilde{k})$, allowing us to rewrite the above integral as follows
\begin{align*}
&=\int_K e^{(d-s)H\big(g^{-1}\kappa(gk)\big)}\vecv(k)\overline{\vecw\big(\kappa(gk)\big)}\,\,dk
\\&=\int_K e^{(s-d)H(gk)}\vecv(k)\overline{\vecw\big(\kappa(gk)\big)}\,dk,
\end{align*}
where we used the identity 
$$g^{-1}\kappa(gk)=k\exp(-H(gk)) \big(\exp(H(gk)n_{kg}^{-1}\exp(-H(gk)\big)$$
to obtain $H(g^{-1}\kappa(gk))=-H(gk)$.

By \eqref{evalinnerprod}, we get
\begin{align*}
\vecv(k)\overline{\vecw\big(\kappa(gk)\big)}=&\langle \vecv,U^s(k)\vecchi_{\tau_1}\rangle_K \cdot
 \overline{\langle \vecw,U^s\big(\kappa(gk)\big)\vecchi_{\tau_2}\rangle_K}
\\=&\langle U^s(k^{-1})\vecv,\vecchi_{\tau_1}\rangle_K \cdot \langle U^s\big(\kappa(gk)\big)\vecchi_{\tau_2},\vecw\rangle_K
\\=&\Big\langle\langle U^s(k^{-1})\vecv,\vecchi_{\tau_1}\rangle\,\cdot\,  U^s\big(\kappa(gk)\big)\vecchi_{\tau_2},\vecw\Big\rangle_K.
\end{align*}
Hence
\begin{align*}
\langle U^s(g)\vecv,\vecw\rangle_K=&\int_K e^{(s-d)H(gk)}\Big\langle\langle U^s(k^{-1})\vecv,\vecchi_{\tau_1}\rangle\,\cdot\,  U^s\big(\kappa(gk)\big)\vecchi_{\tau_2},\vecw\Big\rangle_K\,dk
\\=&\left\langle\int_K e^{(s-d)H(gk)}\langle U^s(k^{-1})\vecv,\vecchi_{\tau_1}\rangle\,\cdot\,  U^s\big(\kappa(gk)\big)\vecchi_{\tau_2}\,dk,\vecw\right\rangle_K.
\end{align*}
We define $T_0:\scrU(\upsilon,s)_{\tau_1}\rightarrow \scrU(\upsilon,s)_{\tau_2}$ by 
$$T_0\vecv= \langle \vecv , \vecchi_{\tau_1}\rangle_K \cdot  \vecchi_{\tau_2}=\vecv(e)\vecchi_{\tau_2}.$$ 
Then 
\begin{equation}\label{PUsP}
\mathsf{P}_{{\tau_2}}U^s(g)\mathsf{P}_{{\tau_1}}= \int_K e^{(s-d)H(gk)} U^s\big(\kappa(gk)\big) T_0U^s(k^{-1}) \,dk.
\end{equation} 
Using \eqref{uss} and \eqref{ttt}, we observe that \begin{equation*}
T_{\tau_1}^{\tau_2}=\int_M U^s(m) T_0 U^s(m^{-1})\,dm.
\end{equation*}
Writing $g=k_1 a_t k_2$, we then have
\begin{align*}
&\mathsf{P}_{{\tau_2}}U^s(g)\mathsf{P}_{{\tau_1}} = U^s(k_1) \mathsf{P}_{{\tau_2}}U^s(a_t)\mathsf{P}_{{\tau_1}} U^s(k_2) 
\\&=\int_M U^s(k_1) U^s(m)\mathsf{P}_{{\tau_2}}U^s(a_t)\mathsf{P}_{{\tau_1}}U^s(m^{-1}) U^s(k_2)\,dm,
\end{align*}
where we used the facts that $M=Z_K(A)$ and $M\subset K$ to commute $U^s(m)$ past $U^s(a_t)$ and $\mathsf{P}_{\tau_i}$. Now using \eqref{PUsP} gives 
\begin{align*}
&= U^s(k_1) \left(\int_M \int_K e^{(s-d)H(a_tk)} U^s\big(m\kappa(a_tk)\big)T_0U^s(k^{-1}m^{-1})\,dk\,dm\, \right)U^s(k_2) 
\\&=   \int_K e^{(s-d)H(a_tk)} U^s\big(k_1\kappa(a_tk)\big)\left( \int_M U^s(m)T_0 U^s({m^{-1}})\,dm\right) U^s(k^{-1}k_2)\,dk.
\end{align*}
Finally, using the identities $k_1\kappa(a_tk)=\kappa(k_1a_tk)$ and $H(a_tk)=H(k_1a_tk)$ together with the change of variables $k=k_2\tilde{k}$ gives
\begin{align*}
\mathsf{P}_{{\tau_2}}U^s(g)\mathsf{P}_{{\tau_1}}=   \int_K e^{(s-d)H(g\tilde{k})} U^s\big(\kappa(g\tilde{k})\big)T_{\tau_1}^{\tau_2} U^s(\tilde{k}^{-1})\,d\tilde{k},
\end{align*}
proving the first identity claimed in the theorem. The second identity follows from the fact that $\langle U^s(m^{-1})\vecv,\vecchi_{\tau_1}\rangle_K=\vecv(m)$ for all $\vecv\in \Uus_{\tau_1}$ and $m\in M$.
\end{proof}

\subsection{On the operator $T_{\tau_1}^{\tau_2}$.}
 Using Lemma \ref{Mzerolem}, we deduce:
 \begin{prop}\label{sigmazero}  Let $\tau_1$, $\tau_2$ be $K$-types of $\scrU(\upsilon,s)$.
If $\vecv\in \scrU(\upsilon,s)_{\tau_1}$ is orthogonal to $\scrU(\upsilon,s)_{\upsilon^*}$, then 
$$T_{\tau_1}^{\tau_2}\vecv=0.$$ 
Consequently, $T_{\tau_1}^{\tau_2}\, \scrU(\upsilon,s)_{\tau_1}\subset \scrU(\upsilon,s)_{\tau_2}\cap \scrU(\upsilon,s)_{\upsilon^*}$.
\end{prop}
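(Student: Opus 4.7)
The approach is to reduce the first assertion directly to Lemma \ref{Mzerolem} by exploiting the fact that $T_{\tau_1}^{\tau_2}\vecv = \int_M \vecv(m) U^s(m)\vecchi_{\tau_2}\,dm$ depends on $\vecv$ only through its restriction to $M$. The plan is to decompose $\vecv$ into $M$-isotypic components under the left-translation action (the ambient $M$-action inherited from the $K$-action $U^s|_K = \lambda$ on $\scrU(\upsilon,s)$), and to show that none of the components actually present in $\vecv$ contribute to $\vecv|_M$.

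Concretely, I would first note that $\scrU(\upsilon,s)_{\tau_1}$, being a finite-dimensional $K$-isotypic subspace, consists of smooth functions on $K$, so pointwise values at $m \in M$ are well defined. Writing
\[
\vecv = \sum_{\sigma \subset \tau_1|_M} \mathsf{P}_\sigma \vecv,
\]
the orthogonality hypothesis gives $\mathsf{P}_{\upsilon^*}\vecv = 0$. For each remaining $\sigma \neq \upsilon^*$, the component $\mathsf{P}_\sigma\vecv$ lies in $L^2(K:\upsilon)_\sigma$: indeed, $\mathsf{P}_\sigma$ and $\mathsf{P}_{\tau_1}$ commute since both are built from the same $K$-representation, and $L^2(K:\upsilon)$ is preserved by $\lambda$ because $\lambda$ and $\rho$ commute. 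Lemma \ref{Mzerolem} then forces $[\mathsf{P}_\sigma\vecv](m) = 0$ for all $m \in M$; summing over $\sigma$ gives $\vecv|_M \equiv 0$, whence $T_{\tau_1}^{\tau_2}\vecv = 0$.

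For the inclusion $T_{\tau_1}^{\tau_2}\scrU(\upsilon,s)_{\tau_1} \subset \scrU(\upsilon,s)_{\tau_2} \cap \scrU(\upsilon,s)_{\upsilon^*}$, note first that $T_{\tau_1}^{\tau_2}\vecw \in \scrU(\upsilon,s)_{\tau_2}$ by construction, since $\vecchi_{\tau_2} \in \scrU(\upsilon,s)_{\tau_2}$ and the latter is $M$-stable. A brief change-of-variables $m \mapsto m_0 m$ in the defining integral shows that $T_{\tau_1}^{\tau_2}$ intertwines the $M$-actions on source and target. The first assertion, applied to $(I - \mathsf{P}_{\upsilon^*})\vecw$, yields $T_{\tau_1}^{\tau_2}\vecw = T_{\tau_1}^{\tau_2}(\mathsf{P}_{\upsilon^*}\vecw)$; since $\mathsf{P}_{\upsilon^*}\vecw \in \scrU(\upsilon,s)_{\upsilon^*}$, the $M$-equivariance of $T_{\tau_1}^{\tau_2}$ places the image inside $\scrU(\upsilon,s)_{\upsilon^*}$.

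I do not foresee a substantive obstacle here; the only conceptual point requiring care is that two different $M$-actions act on $L^2(K)$ (left and right translation), and the subscript $\upsilon^*$ in $\scrU(\upsilon,s)_{\upsilon^*}$ refers to the left-translation isotypic component, which is precisely the one featuring in Lemma \ref{Mzerolem}.
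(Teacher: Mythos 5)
Your proof is correct and follows essentially the same route as the paper: reduce to Lemma \ref{Mzerolem} to get $\vecv|_M\equiv 0$, then use $M$-equivariance of $T_{\tau_1}^{\tau_2}$ (which the paper phrases via Schur's lemma, and you phrase via commutation with $\mathsf{P}_{\upsilon^*}$) to locate the image inside $\scrU(\upsilon,s)_{\tau_2}\cap\scrU(\upsilon,s)_{\upsilon^*}$. The only difference is that you make explicit the decomposition of $\vecv$ into $M$-isotypic pieces before invoking Lemma \ref{Mzerolem} component by component, a step the paper leaves implicit (its proof of that lemma in fact shows $\vecv(m)=[\mathsf{P}_{\upsilon^*}\vecv](m)$ for any $\vecv\in L^2(K:\upsilon)$, which directly gives what is needed).
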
 
\begin{proof}
We have $T_{\tau_1}^{\tau_2}\vecv=\int_M \vecv(m) U^s(m)\vecchi_{\tau_2}\,dm$. By Lemma \ref{Mzerolem}, 
if $\vecv$ is orthogonal to $\scrU(\upsilon,s)_{\upsilon^*}$, then $\vecv(m)=0$ for all $m\in M$
and thus $T_{\tau_1}^{\tau_2}\vecv=0$. A direct computation using \eqref{ttt} shows that $T_{\tau_1}^{\tau_2}$ commutes with $U^s(m)$ for all $m\in M$, so Schur's lemma then gives that if $T_{\tau_1}^{\tau_2}\vecv$ is non-zero, it must be contained in $\scrU(\upsilon,s)_{\tau_2}\cap \scrU(\upsilon,s)_{\upsilon^*}$. 
\end{proof}

If $\vecv$ is $M$-invariant and $\upsilon\in \hat M$ is non-trivial,
then $\upsilon^*$ is non-trivial, and hence $\vecv$  is orthogonal to all $\scrU(\upsilon,s)_{\upsilon^*}$.
Therefore we deduce the following corollary:
\begin{cor}\label{sigmazero3} If $\upsilon\in \hat M$ is non-trivial, then for any $M$-invariant $\vecv\in \scrU(\upsilon,s)_{\tau_1}$,
$$T_{\tau_1}^{\tau_2}\vecv=0.$$ 
\end{cor}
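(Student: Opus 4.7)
The plan is to deduce this corollary as a direct consequence of Proposition \ref{sigmazero}, after making one small observation about duality of $M$-types. The key point is that ``$M$-invariant'' means belonging to the isotypic component of the trivial representation $\mathbf{1}\in\hat M$, so I just need to show that an $M$-invariant vector lies in the orthogonal complement of $\scrU(\upsilon,s)_{\upsilon^*}$ whenever $\upsilon$ is non-trivial.

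First I would note that since duality in $\hat M$ (sending $\sigma$ to $\sigma^*$) takes the trivial representation to itself and takes non-trivial representations to non-trivial representations, the hypothesis that $\upsilon$ is non-trivial implies $\upsilon^*\neq\mathbf{1}$. Next, I would invoke orthogonality of distinct $M$-isotypic components in the decomposition of $\scrU(\upsilon,s)$ under $\rho(M)$: any $M$-invariant vector sits in $\scrU(\upsilon,s)_{\mathbf{1}}$, which is orthogonal to $\scrU(\upsilon,s)_{\upsilon^*}$ since $\upsilon^*\neq\mathbf{1}$. In particular, an $M$-invariant vector $\vecv\in\scrU(\upsilon,s)_{\tau_1}$ is orthogonal to $\scrU(\upsilon,s)_{\upsilon^*}$.

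Finally I would apply Proposition \ref{sigmazero} directly to conclude $T_{\tau_1}^{\tau_2}\vecv=0$. There is no real obstacle; the whole content is packaged in Proposition \ref{sigmazero} and Lemma \ref{Mzerolem}, and the role of non-triviality of $\upsilon$ is merely to ensure $\upsilon^*\neq\mathbf{1}$ so that the $M$-invariance hypothesis forces orthogonality to $\scrU(\upsilon,s)_{\upsilon^*}$. One could equivalently give the proof more directly by going back to the definition \eqref{ttt}: for an $M$-invariant vector $\vecv$, the function $m\mapsto \vecv(m)$ on $M$ is the restriction of an $M$-invariant vector in the induced model, hence must coincide with its projection onto the trivial $M$-type; but since $\vecv\in L^2(K:\upsilon)$ with $\upsilon\neq\mathbf{1}$, Lemma \ref{Mzerolem} (applied with $\sigma=\mathbf{1}\neq\upsilon^*$) forces $\vecv(m)=0$ for all $m\in M$, so the integrand in \eqref{ttt} vanishes identically.
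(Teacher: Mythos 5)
Your proposal is correct and follows the same route as the paper: observe that $\upsilon$ non-trivial implies $\upsilon^*$ non-trivial (so $\upsilon^*\neq\mathbf 1$), deduce that an $M$-invariant vector, lying in the trivial $M$-isotypic component, is orthogonal to $\scrU(\upsilon,s)_{\upsilon^*}$, and apply Proposition~\ref{sigmazero}. Your alternative unwinding via Lemma~\ref{Mzerolem} and the definition~\eqref{ttt} is also valid and is essentially the content of Proposition~\ref{sigmazero} itself.
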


\begin{lem}\label{Tadj} For any $K$-types $\tau_1, \tau_2$ of $\scrU(\upsilon,s)$, we have
\begin{equation*}
(T_{\tau_1}^{\tau_2})^*=T_{\tau_2}^{\tau_1},
\end{equation*}
where the adjoint is defined with respect to $\langle\cdot,\cdot\rangle_K$.
\end{lem}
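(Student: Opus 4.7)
The plan is a direct computation: unfold the definition of $T_{\tau_1}^{\tau_2}$ applied to $\vecv\in\scrU(\upsilon,s)_{\tau_1}$, pair with $\vecw\in\scrU(\upsilon,s)_{\tau_2}$ under $\langle\cdot,\cdot\rangle_K$, and use the reproducing identity \eqref{evalinnerprod} to turn the inner products into pointwise values on $M$, then recognize the symmetry that appears.

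More concretely, first I would write
\begin{equation*}
\langle T_{\tau_1}^{\tau_2}\vecv,\vecw\rangle_K = \int_M \vecv(m)\,\langle U^s(m)\vecchi_{\tau_2},\vecw\rangle_K\,dm.
\end{equation*}
By \eqref{evalinnerprod} applied to $\vecw\in\scrU(\upsilon,s)_{\tau_2}$ we have $\vecw(m)=\langle\vecw,U^s(m)\vecchi_{\tau_2}\rangle_K$, hence $\langle U^s(m)\vecchi_{\tau_2},\vecw\rangle_K=\overline{\vecw(m)}$, and therefore
\begin{equation*}
\langle T_{\tau_1}^{\tau_2}\vecv,\vecw\rangle_K = \int_M \vecv(m)\,\overline{\vecw(m)}\,dm.
\end{equation*}
Running the same argument with the roles of $\tau_1,\tau_2$ swapped (and the same use of \eqref{evalinnerprod}, now applied to $\vecv$) yields
\begin{equation*}
\langle T_{\tau_2}^{\tau_1}\vecw,\vecv\rangle_K = \int_M \vecw(m)\,\overline{\vecv(m)}\,dm,
\end{equation*}
which is the complex conjugate of the previous expression. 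Hence $\langle T_{\tau_1}^{\tau_2}\vecv,\vecw\rangle_K=\overline{\langle T_{\tau_2}^{\tau_1}\vecw,\vecv\rangle_K}=\langle \vecv,T_{\tau_2}^{\tau_1}\vecw\rangle_K$, as required.

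There is essentially no obstacle here: the only subtlety is making sure one applies \eqref{evalinnerprod} in the correct $K$-isotypic component (the vectors $\vecchi_{\tau_i}$ were specifically constructed so that this identity holds on $\scrU(\upsilon,s)_{\tau_i}$, and both $\vecv$ and $\vecw$ lie in the appropriate isotypic components), and that the integrals converge, which is immediate because $M$ is compact and the integrands are continuous. So the lemma reduces to a symmetric double integral identity, and no further ingredients are needed.
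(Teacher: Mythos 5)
Your proof is correct and follows essentially the same route as the paper's: both unfold the definition of $T_{\tau_1}^{\tau_2}$ and invoke the reproducing identity \eqref{evalinnerprod} to turn $\langle U^s(m)\vecchi_{\tau},\cdot\rangle_K$ into pointwise evaluation at $m$. Your version is marginally cleaner in presentation, reducing both pairings to the manifestly symmetric integral $\int_M \vecv(m)\overline{\vecw(m)}\,dm$, whereas the paper moves the integral inside $\langle\vecu,\cdot\rangle_K$ directly, but the underlying computation is the same.
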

\begin{proof}
For any $\vecu\in \scrU(\upsilon,s)_{\tau_1}$ and $\vecv\in \scrU(\upsilon,s)_{\tau_2}$, we have
\begin{align*}
\langle T_{\tau_1}^{\tau_2}\vecu,\vecv\rangle_K&=\int_M \vecu(m)\langle U^s(m)\vecchi_{\tau_2},\vecv\rangle_K\,dm
\\&=\int_M \langle \vecu, U^s(m)\vecchi_{\tau_1}\rangle_K\langle U^s(m)\vecchi_{\tau_2},\vecv\rangle_K\,dm
\\&=\left\langle\vecu,\int_M\langle U^s(m^{-1})\vecv,\vecchi_{\tau_2}\rangle_K\,\cdot\, U^s(m)\vecchi_{\tau_1}\,dm \right\rangle
\\&=\langle\vecu,T_{\tau_2}^{\tau_1}\vecv\rangle_K.
\end{align*}
\end{proof}
\begin{cor}\label{Tbd}  For any $K$-types $\tau_1, \tau_2$ of $\scrU(\upsilon,s)$,
\begin{equation*}
\|T_{\tau_1}^{\tau_2}\|_{K}\leq {\frac{\sqrt{\dim(\tau_1)\dim(\tau_2)}}{\dim(\upsilon)}}.
\end{equation*}
\end{cor}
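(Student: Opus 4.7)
My plan is to bound the operator norm by duality: estimate $\|T_{\tau_1}^{\tau_2}\vecv\|_K$ by pairing $T_{\tau_1}^{\tau_2}\vecv$ against unit test vectors $\vecw\in\scrU(\upsilon,s)_{\tau_2}$ and simplifying. Starting from the definition \eqref{ttt} and pulling the inner product inside the integral, the identity $\langle U^s(m)\vecchi_{\tau_2},\vecw\rangle_K=\overline{\vecw(m)}$ (which is the complex conjugate of \eqref{evalinnerprod} applied to $\vecw\in\scrU(\upsilon,s)_{\tau_2}$) collapses the expression to the clean formula
\begin{equation*}
\langle T_{\tau_1}^{\tau_2}\vecv,\vecw\rangle_K=\int_M\vecv(m)\overline{\vecw(m)}\,dm.
\end{equation*}

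Next I would reduce to the relevant $M$-isotypic pieces. By Proposition \ref{sigmazero}, $T_{\tau_1}^{\tau_2}$ annihilates the orthogonal complement of $\scrU(\upsilon,s)_{\tau_1}\cap\scrU(\upsilon,s)_{\upsilon^*}$ in $\scrU(\upsilon,s)_{\tau_1}$, and its image sits inside $\scrU(\upsilon,s)_{\tau_2}\cap\scrU(\upsilon,s)_{\upsilon^*}$. Hence it suffices to establish the bound for $\vecv\in\scrU(\upsilon,s)_{\tau_1}\cap\scrU(\upsilon,s)_{\upsilon^*}$ with the supremum restricted to $\vecw\in\scrU(\upsilon,s)_{\tau_2}\cap\scrU(\upsilon,s)_{\upsilon^*}$.

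Applying the Cauchy--Schwarz inequality on $M$ to the displayed identity yields
\begin{equation*}
|\langle T_{\tau_1}^{\tau_2}\vecv,\vecw\rangle_K|^2\le\int_M|\vecv(m)|^2\,dm\cdot\int_M|\vecw(m)|^2\,dm,
\end{equation*}
at which point Lemma \ref{Mvint}(1) evaluates each factor as $\tfrac{\dim(\tau_i)}{\dim(\upsilon)}\|\cdot\|_K^2$, giving the advertised bound. I don't anticipate any real obstacle; the only tactical point worth flagging is that pairing with $\vecw$ before estimating is essential. A naive triangle inequality applied directly to \eqref{ttt}, combined with Lemma \ref{Mvint}(2), would only yield the strictly weaker bound $\sqrt{\dim(\tau_1)\dim(\tau_2)/\dim(\upsilon)}$, off by a factor of $\sqrt{\dim(\upsilon)}$ from the claim.
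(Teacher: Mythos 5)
Your argument is correct, and it takes a genuinely different route from the paper's. Starting from the definition \eqref{ttt} and the evaluation identity \eqref{evalinnerprod}, you obtain the clean duality formula $\langle T_{\tau_1}^{\tau_2}\vecv,\vecw\rangle_K=\int_M\vecv(m)\overline{\vecw(m)}\,dm$ (this is in fact the second line in the proof of Lemma \ref{Tadj}), reduce via Proposition \ref{sigmazero} to $\vecv,\vecw$ in the $\upsilon^*$-isotypic pieces, apply Cauchy--Schwarz over $M$, and evaluate each factor by Lemma \ref{Mvint}(1). The paper instead computes $\|T_{\tau_1}^{\tau_2}\vecv\|_K^2$ directly: it unfolds a double integral over $M\times M$, invokes the Schur orthogonality relations to collapse one of the integrals, applies Cauchy--Schwarz to $\int_M\overline{\vecchi_{\tau_1}(m)}\vecchi_{\tau_2}(m)\,dm$, and finishes with Lemma \ref{Mvint}(2). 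Your version is shorter and avoids the Schur orthogonality computation; the paper's version has the advantage that the same intermediate identity is re-used, with $\tau_1=\tau_2$, to derive the exact equality of Corollary \ref{Ttautau} (where Cauchy--Schwarz is an equality). One small correction to your closing remark: the naive triangle-inequality bound $\sqrt{\dim(\tau_1)\dim(\tau_2)/\dim(\upsilon)}$ you describe would be obtained by combining $\|\vecchi_{\tau_2}\|_K=\sqrt{\dim(\tau_2)}$ with Cauchy--Schwarz on $\int_M|\vecv(m)|\,dm$ and Lemma \ref{Mvint}\emph{(1)}, not Lemma \ref{Mvint}(2); but the tactical point you make — that pairing against a test vector before estimating gains a factor of $\sqrt{\dim(\upsilon)}$ — is exactly right.
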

\begin{proof}
For any unit vector $\vecv\in \scrU(\upsilon,s)_{\tau_1}\cap \scrU(\upsilon,s)_{\upsilon^*}$, we have
\begin{align*}
&\langle T_{\tau_1}^{\tau_2}\vecv,T_{\tau_1}^{\tau_2}\vecv\rangle_K
=\int_M \overline{\vecv(m)}\langle T_{\tau_1}^{\tau_2}\vecv,U^s(m)\vecchi_{\tau_2}\rangle_K\,dm
\\&=\int_M\int_M \overline{\vecv(m_1)}\vecv(m_2)\langle U^s(m_2)\vecchi_{\tau_2},U^s(m_1)\vecchi_{\tau_2}\rangle_K\,dm_2\,dm_1
\\&=\int_M\int_M \overline{\langle \vecv,U^s(m_1)\vecchi_{\tau_1}\rangle_K}\langle \vecv,U^s(m_2)\vecchi_{\tau_1}\rangle_K\vecchi_{\tau_2}(m_2^{-1}m_1)\,dm_2\,dm_1
\\&=\int_M\left(\int_M \overline{\langle \vecv,U^s(m_2m_3)\vecchi_{\tau_1}\rangle_K}\langle \vecv,U^s(m_2)\vecchi_{\tau_1}\rangle_K\,dm_2\right)\vecchi_{\tau_2}(m_3)\,dm_3.
\end{align*}
Since $\vecv$ and $\vecchi_{\tau_1}$ are both in $\scrU(\upsilon,s)_{\tau_1}\cap \scrU(\upsilon,s)_{\upsilon^*}$, the Schur orthogonality relations for $M$ give
\begin{align*}
\int_M& \overline{\langle \vecv,U^s(m_2m_3)\vecchi_{\tau_1}\rangle_K}\langle \vecv,U^s(m_2)\vecchi_{\tau_1}\rangle_K\,dm_2 
\\&=\frac{\|\vecv\|^2}{\dim(\upsilon)}\overline{\langle\vecchi_{\tau_1},U^s(m_3)\vecchi_{\tau_1}\rangle}=\frac{\overline{\vecchi_{\tau_1}(m_3)}}{\dim(\upsilon)}.
\end{align*}
We thus have 
\begin{align*}
\langle T_{\tau_1}^{\tau_2}\vecv,T_{\tau_1}^{\tau_2}\vecv\rangle_K&=\frac{1}{\dim(\upsilon)}\int_M\overline{\vecchi_{\tau_1}(m)}\vecchi_{\tau_2}(m)\,dm
\\ &\leq  \frac{1}{\dim(\upsilon)}\sqrt{\int_M|\vecchi_{\tau_1}(m)|^2\,dm\int_M|\vecchi_{\tau_2}(m)|^2\,dm}
\\ &= \frac{\dim(\tau_1)\dim(\tau_2)}{\dim(\upsilon)^2}
\end{align*}
by Lemma \ref{Mvint} (2).
Combining this estimate with Proposition  \ref{sigmazero}, the claim follows.
\end{proof}
\begin{cor}\label{Ttautau}
For any $K$-type $\tau$ of $\scrU(\upsilon,s)$,  we have
\begin{equation*}
T_{\tau}^{\tau}=\frac{\dim(\tau)}{\dim(\upsilon)} \mathsf{P}_{\upsilon^*}.
\end{equation*}
\end{cor}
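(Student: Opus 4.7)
The plan is to apply Schur's lemma after verifying that $T_\tau^\tau$ is $M$-intertwining, and then to pin down the scalar by a trace computation. First, a direct change of variable in the defining integral \eqref{ttt} shows that $T_\tau^\tau \circ U^s(m) = U^s(m) \circ T_\tau^\tau$ for every $m \in M$:
\begin{equation*}
T_\tau^\tau U^s(m_1)\vecv = \int_M \vecv(m_1^{-1}m) U^s(m)\vecchi_\tau\,dm = U^s(m_1)\int_M \vecv(m) U^s(m)\vecchi_\tau\,dm.
\end{equation*}
Combined with Proposition \ref{sigmazero}, which tells us that $T_\tau^\tau$ annihilates the orthogonal complement of $\scrU(\upsilon,s)_{\upsilon^*}$ in $\scrU(\upsilon,s)_\tau$ and has image inside $\scrU(\upsilon,s)_\tau \cap \scrU(\upsilon,s)_{\upsilon^*}$, the branching law recalled in Section \ref{SO(n)reps}---according to which $\upsilon^*$ appears in $\tau|_M$ with multiplicity at most one---together with Schur's lemma forces $T_\tau^\tau = c\, \mathsf{P}_{\upsilon^*}\big|_{\scrU(\upsilon,s)_\tau}$ for some scalar $c$.

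To pin down $c$ we compute $\mathrm{tr}(T_\tau^\tau)$ in two different ways, both taken with respect to $\langle\cdot,\cdot\rangle_K$. On the one hand, $\mathrm{tr}\big(c\, \mathsf{P}_{\upsilon^*}|_{\scrU(\upsilon,s)_\tau}\big) = c\,\dim(\scrU(\upsilon,s)_\tau\cap \scrU(\upsilon,s)_{\upsilon^*})$, which equals $c\dim(\upsilon)$ when $\upsilon^*\subset \tau$. On the other hand, pick an orthonormal basis $\{\vecv_i\}$ of $\scrU(\upsilon,s)_\tau$ and use the reproducing property \eqref{evalinnerprod} to get
\begin{equation*}
\langle T_\tau^\tau \vecv_i,\vecv_i\rangle_K = \int_M \vecv_i(m)\,\overline{\langle \vecv_i, U^s(m)\vecchi_\tau\rangle_K}\,dm = \int_M |\vecv_i(m)|^2\,dm.
\end{equation*}
Lemma \ref{Mzerolem} shows that the contribution of any basis vector lying outside the $\upsilon^*$-isotype is zero, so we may arrange the basis so that only the $\dim(\upsilon)$ vectors in $\scrU(\upsilon,s)_\tau \cap \scrU(\upsilon,s)_{\upsilon^*}$ contribute; Lemma \ref{Mvint}(1) then says each contributes $\dim(\tau)/\dim(\upsilon)$, giving $\mathrm{tr}(T_\tau^\tau) = \dim(\tau)$. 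Equating the two expressions yields $c = \dim(\tau)/\dim(\upsilon)$ as desired. The degenerate case $\upsilon^* \not\subset \tau$ requires no separate argument: both sides of the asserted identity vanish, the left-hand side by Proposition \ref{sigmazero} and the right-hand side because $\mathsf{P}_{\upsilon^*}$ is zero on $\scrU(\upsilon,s)_\tau$.

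There is no genuine obstacle here; the only care needed is in bookkeeping the inner product (always $\langle\cdot,\cdot\rangle_K$) and in making the orthogonal decomposition of the basis of $\scrU(\upsilon,s)_\tau$ according to $M$-isotype so that Lemmas \ref{Mzerolem} and \ref{Mvint}(1) apply directly. Those lemmas supply exactly the correct normalization constant $\dim(\tau)/\dim(\upsilon)$.
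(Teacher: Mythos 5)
Your proposal is correct. The overall structure matches the paper's: Proposition~\ref{sigmazero} together with the $M$-intertwining of $T_\tau^\tau$ and multiplicity-one branching reduces the claim to evaluating a scalar $c$. Where you part ways with the paper is in pinning down $c$. The paper recomputes $\|T_\tau^\tau\|_K = \dim(\tau)/\dim(\upsilon)$ by rerunning the argument from Corollary~\ref{Tbd} (giving $|c|$), and then separately observes that $c = \langle T_\tau^\tau\vecv,\vecv\rangle_K = \int_M |\vecv(m)|^2\,dm \geq 0$ to fix the sign. You instead compute $\mathrm{tr}(T_\tau^\tau)$ in two ways, using Lemmas~\ref{Mzerolem} and~\ref{Mvint}(1) to evaluate the sum $\sum_i \int_M |\vecv_i(m)|^2\,dm = \dim(\tau)$, bypassing the norm computation from Corollary~\ref{Tbd} entirely. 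Your trace route is a clean alternative; in fact it can be simplified even further, since for a single unit vector $\vecv \in \scrU(\upsilon,s)_\tau \cap \scrU(\upsilon,s)_{\upsilon^*}$, the identity $c = \langle T_\tau^\tau\vecv,\vecv\rangle_K = \int_M|\vecv(m)|^2\,dm$ combined with Lemma~\ref{Mvint}(1) already gives $c = \dim(\tau)/\dim(\upsilon)$ directly, no trace summation or norm computation required. Your explicit treatment of the degenerate case $\upsilon^*\not\subset\tau$ is harmless but in fact vacuous: since every $K$-type $\tau$ of $\scrU(\upsilon,s)$ contains $\upsilon$ and the interlacing conditions are invariant under $\upsilon\mapsto\upsilon^*$, any such $\tau$ automatically contains $\upsilon^*$ as well.
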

\begin{proof}
Following the proof of Corollary \ref{Tbd}, we obtain
\begin{equation*}
\|T_{\tau}^{\tau}\|_K^2=\frac{1}{\dim(\upsilon)}\int_M|\vecchi_{\tau}(m)|^2\,dm=\frac{\dim(\tau)^2}{\dim(\upsilon)^2}.
\end{equation*}
So by Proposition \ref{sigmazero}, we have 
$$T_{\tau}^{\tau}=c\cdot\mathsf{P}_{\upsilon^*},$$ where $c$ is one of $\pm \frac{\dim(\tau)}{\dim(\upsilon)}$. Since $$c=\langle T_{\tau}^{\tau}\vecv,\vecv\rangle_K=\int_M|\vecv(m)|^2\,dm\geq 0$$
for any unit vector $\vecv\in\scrU(\upsilon,s)_{\tau}\cap\scrU(\upsilon,s)_{\upsilon^*}$,
we get $c=\frac{\dim(\tau)}{\dim(\upsilon)}$.
\end{proof}
\section{Asymptotic expansions of matrix coefficients}\label{matrixcoeffs}
We fix  a complementary series representation $\scrU(\upsilon,s)$ of $G$ for some $\upsilon\in \hat M$ and $s\in \scrI_{\upsilon}$.
The main goal of this section is to obtain  effective expansions of matrix coefficients for $\scrU(\upsilon,s)$. More precisely, we will write a matrix coefficient $\langle U^s(a_t) \vecv, \vecu\rangle_{\scrU(\upsilon,s)}$
as a main term  that decays like $e^{(s-d)t}$ as $t\rightarrow\infty$ and an error term that decays exponentially faster depending only on $2s-d>0$. To do this, we first work out the asymptotics of matrix coefficients with respect to $\langle \cdot,\cdot\rangle_K$ and then use explicit formulas for intertwining operators to convert our results into statements for $\langle \cdot,\cdot\rangle_{\scrU(\upsilon,s)}$.

\subsection{Matrix coefficients with respect to $\langle \cdot,\cdot,\rangle_K$.}
We start by proving a bound on how far elements of $K$ move vectors in representations of $K$. We let $\dist_K$ denote the bi-invariant Riemannian metric on $K$ induced from the negative of the Killing form on $\mathfrak{k}$, and
let $|\cdot|_K$ denote the corresponding norm on $\mathfrak{k}$.
\begin{lem}\label{KSob} Let $(\pi,V)$ be a finite-dimensional unitary representation of $K$ with invariant inner product $(\cdot,\cdot)_V$. Then
for all $\vecv\in V$ and $k\in K$, we have \begin{equation*}
\|\pi(k)\vecv-\vecv\|_{V} \ll \dist_K(k,e)\,\|\vecv\|_{\scrS^1(V)},
\end{equation*}
where the implied constant depends solely on the choice of basis defining $\|\cdot\|_{\scrS^m(V)}$.
\end{lem}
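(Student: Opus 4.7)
The plan is to reduce to the case when $k$ lies in a small neighborhood of the identity, write such a $k$ as a one-parameter exponential, and differentiate.

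First, I would handle the case when $\dist_K(k,e)$ is bounded away from zero. Since $K$ is compact, its diameter $D:=\sup_{k\in K}\dist_K(k,e)$ is finite, and the unitarity of $\pi$ gives the trivial bound $\|\pi(k)\vecv-\vecv\|_V\leq 2\|\vecv\|_V\leq \frac{2}{\dist_K(k,e)}\dist_K(k,e)\|\vecv\|_{\scrS^1(V)}$. So for $\dist_K(k,e)$ bounded below (say, $\geq 1$), the claim is immediate with an absolute implied constant.

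Now fix a small neighborhood $\scrO$ of $e\in K$ on which $\exp:\fk\to K$ is a diffeomorphism and such that for every $k\in \scrO$, the unique $X\in\fk$ with $\exp(X)=k$ satisfies $|X|_K=\dist_K(k,e)$. For $k=\exp(X)\in\scrO$, the fundamental theorem of calculus applied to the smooth curve $t\mapsto\pi(\exp(tX))\vecv$ gives
\begin{equation*}
\pi(k)\vecv-\vecv=\int_0^1 \pi\big(\exp(tX)\big)\,d\pi(X)\vecv\,dt.
\end{equation*}
Taking $V$-norms and using unitarity of $\pi$ yields $\|\pi(k)\vecv-\vecv\|_V\leq \|d\pi(X)\vecv\|_V$.

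Next, expanding $X=\sum_j c_j(X) X_j$ in the chosen basis $\{X_j\}$ of $\fk$, I have $|c_j(X)|\ll |X|_K$ with an implied constant depending only on the basis, so
\begin{equation*}
\|d\pi(X)\vecv\|_V\leq \sum_j |c_j(X)|\cdot \|d\pi(X_j)\vecv\|_V\ll |X|_K\cdot \|\vecv\|_{\scrS^1(V)},
\end{equation*}
by the very definition \eqref{sobnormdef} of the Sobolev norm. Since $|X|_K=\dist_K(k,e)$, this gives the desired bound for $k\in\scrO$, and combining with the trivial bound outside $\scrO$ finishes the proof.

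There is no real obstacle: this is a standard exponential-map-plus-fundamental-theorem-of-calculus argument. The only thing to watch is that the cut locus of $e$ in $K$ prevents us from writing every $k$ as $\exp(X)$ with $|X|_K=\dist_K(k,e)$, but compactness of $K$ together with unitarity handles the far-away regime trivially.
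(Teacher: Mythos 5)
Your proof is correct, and the core idea (fundamental theorem of calculus along a one-parameter subgroup, then estimating $d\pi(X)\vecv$ via the Sobolev norm) is the same as the paper's. The difference is that you split into a near-identity regime and a far regime, because you were wary that $k$ might not be expressible as $\exp(X)$ with $|X|_K=\dist_K(k,e)$. In fact that worry is unfounded here: for a bi-invariant Riemannian metric on a compact Lie group, the Lie-theoretic exponential coincides with the Riemannian exponential at $e$, and completeness (Hopf--Rinow) gives a minimizing geodesic from $e$ to any $k$; such a geodesic is a one-parameter subgroup $t\mapsto\exp(tJ)$, so one can always write $k=\exp(\dist_K(k,e)J)$ with $|J|_K=1$. (The cut locus affects uniqueness and the local diffeomorphism property of $\exp$, not the existence of a minimizing geodesic.) The paper uses exactly this fact and so has no case distinction: it integrates $t\mapsto\pi(\exp(tJ))\vecv$ over $[0,\dist_K(k,e)]$ and bounds the integrand by $\max_{|X|_K=1}\|d\pi(X)\vecv\|_V\ll\|\vecv\|_{\scrS^1(V)}$. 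Your split-into-cases workaround costs nothing mathematically, but recognizing the global minimizing-geodesic fact gives a slightly cleaner, one-step argument.
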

\begin{proof}
Since $\dist_K$ is bi-invariant, there exists $J\in \mathfrak{k}$ with $|J|_K=1$ such that the $k=\exp(\dist_K(k,e) J)$. This gives
\begin{align*}
\pi(k)\vecv-\vecv&=\int_0^{\dist_K(k,e)} \frac{d}{dt} \pi\big( \exp(tJ)\big)\vecv\,dt
\\&=\int_0^{\dist_K(k,e)}  \pi(\exp(tJ))d\pi\big( \Ad({\exp(-tJ)})J\big)\vecv\,dt.
\end{align*}
Since $\pi$ is unitary, $|J|_K=1$, and $K$ is compact, we get
\begin{equation*}
\big\| \pi(\exp(tJ))d\pi\big( \Ad({\exp(-tJ)})J\big)\vecv\big\|_V\leq\max_{\overset{X\in\mathfrak{k}}{|X|_K=1}}\|d\pi( X)\vecv\|_V\ll \|\vecv\|_{\scrS^1(V)}.
\end{equation*}
\end{proof}

\begin{defn} The Harish-Chandra $c$-function $C_+(s): \scrU(\upsilon,s)\to \scrU(\upsilon,s)$ is defined as follows:
\begin{equation}\label{ccc}
C_+(s)=\int_{\overline{N}} U^s\big(\kappa(\overline{n})^{-1}\big) e^{-s H(\overline{n})}\,d\overline{n}.
\end{equation}
\end{defn}
 Since $U^s$ is unitary on $\scrU(\upsilon,s)$ and $\int_{\overline{N}}  e^{-s H(\overline{n})}\,d\overline{n}<\infty$ for all $s>\frac{d}{2}$ (cf.\ \cite[Proposition 7.6]{Knapp1}), $C_+(s)$ is a well-defined bounded operator on $\scrU(\upsilon,s)$. Since $C_+(s)$ is defined using only the restriction of $U^s$ to $K$, it preserves the $K$-types of $\scrU(\upsilon,s)$.
 
 \begin{lem}\label{css}
 $C_+(s)$ preserves the $M$-types of $\scrU(\upsilon,s)$.
 \end{lem}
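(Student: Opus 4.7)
The plan is to show directly that $C_+(s)$ commutes with $U^s(m)$ for every $m\in M$; since the projections $\mathsf{P}_\sigma=\int_M \overline{\chi_\sigma(m)}U^s(m)\,dm$ are built from $U^s(M)$, any operator commuting with $U^s(M)$ automatically commutes with each $\mathsf{P}_\sigma$ and hence preserves every $M$-type $\scrU(\upsilon,s)_\sigma$.

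To verify the commutation, I would perform the change of variables $\overline{n}\mapsto m^{-1}\overline{n}m$ in the integral defining $C_+(s)$. The key inputs are: (i) $M$ normalizes $\overline{N}$, since $M$ centralizes $A$ and normalizes the root space $\overline{\fn}$; (ii) as noted in Section \ref{prelims}, conjugation by $m\in M=\SO(d)$ acts as an isometry on $\overline{N}\cong\br^d$, so the Haar measure $d\overline{n}$ is $M$-invariant; and (iii) the Iwasawa components satisfy the $M$-equivariance
\begin{equation*}
\kappa(m^{-1}\overline{n}m)=m^{-1}\kappa(\overline{n})m,\qquad H(m^{-1}\overline{n}m)=H(\overline{n}),
\end{equation*}
which follows by applying $m^{-1}(\cdot)m$ to the decomposition $\overline{n}=\kappa(\overline{n})\exp(H(\overline{n}))n_{\overline{n}}$ and using that $M$ commutes with $A$ and normalizes $N$.

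Granted these three facts, substituting into \eqref{ccc} yields
\begin{align*}
U^s(m)C_+(s)&=\int_{\overline{N}}U^s\bigl(m\,\kappa(\overline{n})^{-1}\bigr)e^{-sH(\overline{n})}\,d\overline{n}\\
&=\int_{\overline{N}}U^s\bigl(m\,(m^{-1}\kappa(\overline{n})m)^{-1}\bigr)e^{-sH(\overline{n})}\,d\overline{n}\\
&=\int_{\overline{N}}U^s\bigl(\kappa(\overline{n})^{-1}\,m\bigr)e^{-sH(\overline{n})}\,d\overline{n}=C_+(s)U^s(m),
\end{align*}
which gives the claim. I expect no essential obstacle: the only thing to be careful about is tracking the $M$-equivariance of $\kappa$ and $H$, and this is immediate from the uniqueness of the Iwasawa decomposition together with the commutation relations $mAm^{-1}=A$ and $mNm^{-1}=N$.
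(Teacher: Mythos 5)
Your proof is correct and follows essentially the same route as the paper's: both establish $[C_+(s),U^s(m)]=0$ by conjugating the variable of integration in \eqref{ccc} by $m$, using that $M$ normalizes $\overline{N}$ with $M$-conjugation-invariant Haar measure and that $\kappa$ and $H$ are $M$-equivariant under conjugation. The only cosmetic difference is that the paper starts from $C_+(s)U^s(m)$ rather than $U^s(m)C_+(s)$; your added remark that commutation with $U^s(M)$ yields commutation with each $\mathsf{P}_\sigma$ is left implicit in the paper.
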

 \begin{proof} Note that $M$ normalizes $\overline{N}$, $d\overline{n}=d(m\overline{n}m^{-1})$ and $H(mgm^{-1})=H(g)$ for all $m\in M$, $\overline{n}\in\overline{N}$, and $g\in G$. Therefore for all $m\in M$,
\begin{align*}
C_+(s)U^s(m)&=\int_{\overline{N}} U^s\big(\kappa(\overline{n})^{-1}m\big) e^{-s H(\overline{n})}\,d\overline{n}
\\&=\int_{\overline{N}} U^s\big(m\kappa(m^{-1}\overline{n}m)^{-1}\big) e^{-s H(\overline{n})}\,d\overline{n}=U^s(m)C_+(s);
\end{align*}
hence the claim follows.
\end{proof}
For $d/2<s< d$, set
\begin{equation}\label{eta2}
\eta_s=\min\lbrace 2s-d, 1\rbrace >0.
\end{equation}
We remark that the following theorem was shown in \cite[Theorem 3.23]{MoOh} for {\it{some} $\eta_s$}, based on Harish-Chandra's expansion formula and the maximum modulus principle. We give a more direct argument, with the explicit $\eta_s$ given in \eqref{eta2}. Let $\scrS^1(K)$ denote the Sobolev norm $\scrS^1(L^2(K))$ defined in Section \ref{Sobnormdefsec}.
\begin{thm}\label{Kmatrixcoeffs} Let $\tau_1, \tau_2$ be $K$-types of $\scrU(\upsilon,s)$.
For all $\vecu\in \scrU(\upsilon,s)_{\tau_1}$ and $\vecv\in \scrU(\upsilon,s)_{\tau_2}$, we have for any $t\geq 0$,  
\begin{align*}
\langle U^s(a_t)\vecu,\vecv\rangle_{K}=e^{(s-d)t}&\langle T_{\tau_1}^{\tau_2}C_+(s)\vecu,\vecv\rangle_{K}
\\&\quad + O_s\left(e^{(s-d-\eta_s)t}\|T_{\tau_1}^{\tau_2}\|_K\|\vecu\|_{K} \|\vecv\|_{\scrS^1(K)}\right),
\end{align*}
where the implied constant is uniformly bounded over $s$ in compact subsets of $(\frac{d}{2},d)$.
\end{thm}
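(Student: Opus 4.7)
The plan is to start from the Eisenstein integral representation of $\mathsf{P}_{\tau_2}U^s(a_t)\mathsf{P}_{\tau_1}$ in Theorem~\ref{Eisensteinthm} and convert the $K$-integration into one over $\overline{N}$, where the contraction $\overline{n}_t:=a_t\overline{n}a_{-t}\to e$ as $t\to+\infty$ makes the asymptotic behavior transparent. Moving the factor $U^s(\kappa(a_tk))$ over to $U^s(\kappa(a_tk)^{-1})$ acting on $\vecv$ via the inner product, substituting $k=\kappa(\overline{n})$ with $dk=e^{-dH(\overline{n})}\,d\overline{n}$, and invoking the Iwasawa cocycle identities
\begin{equation*}
H(a_t\kappa(\overline{n}))=t+H(\overline{n}_t)-H(\overline{n}),\qquad \kappa(a_t\kappa(\overline{n}))=\kappa(\overline{n}_t),
\end{equation*}
the first step is to factor out $e^{(s-d)t}$ and rewrite the matrix coefficient as
\begin{equation*}
e^{(s-d)t}\int_{\overline{N}} e^{(s-d)H(\overline{n}_t)}e^{-sH(\overline{n})}\bigl\langle T_{\tau_1}^{\tau_2}U^s(\kappa(\overline{n})^{-1})\vecu,\,U^s(\kappa(\overline{n}_t)^{-1})\vecv\bigr\rangle_K d\overline{n}.
\end{equation*}

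The second step is to identify the main term. Since $\overline{n}_t\to e$, formally replacing $\overline{n}_t$ by $e$ in the integrand produces $e^{(s-d)t}\langle T_{\tau_1}^{\tau_2}C_+(s)\vecu,\vecv\rangle_K$ upon recognizing the definition \eqref{ccc} of the Harish-Chandra $c$-function. The resulting difference splits, after Cauchy--Schwarz together with the uniform bound $\|T_{\tau_1}^{\tau_2}U^s(\kappa(\overline{n})^{-1})\vecu\|_K\leq \|T_{\tau_1}^{\tau_2}\|_K\|\vecu\|_K$, into two pieces, controlled respectively by $\|U^s(\kappa(\overline{n}_t)^{-1})\vecv-\vecv\|_K$ and by $|e^{(s-d)H(\overline{n}_t)}-1|$. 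Lemma~\ref{KSob} bounds the first by $\min(1,\dist_K(\kappa(\overline{n}_t),e))\|\vecv\|_{\scrS^1(K)}$, while $s<d$ together with $H(\overline{n}_t)\geq 0$ bounds the second by $\min(1,H(\overline{n}_t))$.

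The third step is the explicit integral estimate. Writing $\overline{n}=\exp(\overline{X})$ with $r=|\overline{X}|$ and using the standard formulas $e^{H(\overline{n})}=1+r^2$, $H(\overline{n}_t)=\log(1+e^{-2t}r^2)$, and $\dist_K(\kappa(\overline{n}_t),e)\lesssim \min(1,e^{-t}r)$, the error reduces in polar coordinates to bounding
\begin{equation*}
\int_0^\infty (1+r^2)^{-s}\min(1,e^{-t}r)\,r^{d-1}dr\quad\text{and}\quad \int_0^\infty (1+r^2)^{-s}\min(1,e^{-2t}r^2)\,r^{d-1}dr.
\end{equation*}
Splitting each at $r=e^t$ and using $s>d/2$ to handle the tail, both integrals are $O_s(e^{-\eta_s t})$: the large-$r$ regime contributes $e^{(d-2s)t}$, coming from the borderline integrability of $C_+(s)$, and the small-$r$ regime contributes $e^{-t}$, coming from the first-order Sobolev estimate; their maximum gives exactly $\eta_s=\min(2s-d,1)$. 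Uniformity in $s$ over compact subsets of $(d/2,d)$ is immediate since the implied constants depend on $s$ only through rational factors such as $(2s-d)^{-1}$.

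The principal obstacle is extracting the sharp exponent $\eta_s=\min(2s-d,1)$, as opposed to the non-explicit positive rate afforded by Harish-Chandra's recursion as used in \cite{MoOh}. The dichotomy emerges because the $c$-function integrability and the first-order Sobolev cost balance exactly at the cutoff scale $r\asymp e^t$; neither bound alone yields the correct rate, and the direct $\overline{N}$-integration avoids the opaque recursion coefficients in Harish-Chandra's expansion.
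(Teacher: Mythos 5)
Your proposal is correct and follows essentially the same route as the paper's own proof: Eisenstein integral from Theorem~\ref{Eisensteinthm}, passage from $K$ to $\overline N$ via $dk=e^{-dH(\overline n)}d\overline n$, the Iwasawa cocycle identities, extraction of the Harish-Chandra $c$-function as the $\overline n_t\to e$ limit, the bound on $\|U^s(k)\vecv-\vecv\|_K$ from Lemma~\ref{KSob}, and the polar-coordinate estimate split at $r\asymp e^t$ yielding the competing rates $e^{(d-2s)t}$ and $e^{-t}$. The only cosmetic difference is that the paper packages the two error sources into a single quantity $\vecw(r,\vectheta)=(1+r^2)^{s-d}U^s(\kappa(\overline n_{(r,\vectheta)})^{-1})\vecv-\vecv$ and bounds $\|\vecw(r,\vectheta)\|_K\ll_s\min\{1,r\}\|\vecv\|_{\scrS^1(K)}$, whereas you treat the $K$-rotation piece and the $e^{(s-d)H(\overline n_t)}-1$ piece as two separate integrals; the resulting estimates are identical.
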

\begin{proof}
Applying Theorem \ref{Eisensteinthm}, we have 
\begin{equation*}
\langle U^s(a_t)\vecu,\vecv\rangle_{K}=\int_K e^{(s-d)H(a_tk)} \langle U^s\big(\kappa(a_tk)\big) T_{\tau_1}^{\tau_2}U^s(k^{-1})\vecu,\vecv\big\rangle_{K}\,dk.
\end{equation*}
Since the function $k\mapsto e^{(s-d)H(a_tk)} \langle U^s\big(\kappa(a_tk)\big) T_{\tau_1}^{\tau_2}U^s(k^{-1})\vecu,\vecv\big\rangle_{\scrU(\upsilon,s)}$ is right $M$-invariant, we may use the integration formula \cite[Consequence 3, p. 147]{Knapp1} to obtain
\begin{multline*}
\langle U^s(a_t)\vecu,\vecv\rangle_{K}\\=\int_{\overline{N}}e^{(s-d)H(a_t\kappa(\overline{n}))} \big\langle U^s\big(\kappa(a_t\kappa(\overline{n}))\big) T_{\tau_1}^{\tau_2}U^s(\kappa(\overline{n})^{-1})\vecu,\vecv\big\rangle_{K}e^{-d\,H(\overline{n})}\,d\overline{n}.
\end{multline*}
The identities
\begin{align*}
H\big(a_t\kappa(\overline{n})\big)&=H(a_t\overline{n}a_{-t})+H(a_t)-H(\overline{n})\quad\text{ and }\\
\kappa\big(a_t\kappa(\overline{n})\big)&=\kappa(a_t\overline{n}a_{-t})
\end{align*} 
then give that the previous integral is equal to
\begin{align*}
e^{(s-d)t}\int_{\overline{N}}\big\langle  T_{\tau_1}^{\tau_2}U^s(\kappa(\overline{n})^{-1})\vecu,U^s\big(\kappa(a_t\overline{n}a_{-t})^{-1}\big)\vecv\big\rangle_{K}e^{(s-d)H(a_t\overline{n}a_{-t})-s\,H(\overline{n})}\,d\overline{n}.
\end{align*}
We now use the identification of $\overline{N}$ with $\RR^d$ to again rewrite: 
\begin{equation}\label{Nint}
e^{(s-d)t}\int_{\RR^d}\big\langle  T_{\tau_1}^{\tau_2}U^s(\kappa(\overline{n}_{\vecx})^{-1})\vecu,U^s\big(\kappa(\overline{n}_{e^{-t}\vecx})^{-1}\big)\vecv\big\rangle_{K}e^{(s-d)H(\overline{n}_{e^{-t}\vecx})-s\,H(\overline{n}_{\vecx})}\,d\vecx;
\end{equation}
note that the integral is absolutely convergent due to $s>\frac{d}{2}$. Using the fact that  $e^{H(\overline{n}_{\vecx})}= 1+\|\vecx\|^2$ (cf. \cite[p.\ 486 and p.\ 564]{Knapp2}) gives
\begin{align*}
\langle& U^s(a_t)\vecu,\vecv\rangle_{K}=e^{(s-d)t}\langle T_{\tau_1}^{\tau_2}C_+(s)\vecu,\vecv\rangle_{K}+
\\&e^{(s-d)t}\int_{\RR^d} \big\langle  T_{\tau_1}^{\tau_2}U^s(\kappa(\overline{n}_{\vecx})^{-1})\vecu,(1+\|e^{-t}\vecx\|^2)^{s-d}U^s\big(\kappa(\overline{n}_{e^{-t}\vecx})^{-1}\big)\vecv-\vecv\big\rangle_{K}  \sfrac{d\vecx}{(1+\|\vecx\|^2)^{s}}.
\end{align*} 
Changing to spherical coordinates, we let $\vecx=(r, \vectheta)$, $r\geq 0$, $\vectheta\in \mathbb{S}^{d-1}$ and set
\begin{equation*}
\vecw(r,\vectheta)=(1+r^2)^{s-d}U^s\big(\kappa(\overline{n}_{(r,\vectheta)})^{-1}\big)\vecv-\vecv.
\end{equation*}
Using this, we deduce
\begin{align*}
\langle& U^s(a_t)\vecu,\vecv\rangle_{K}=e^{(s-d)t}\langle T_{\tau_1}^{\tau_2}C_+(s)\vecu,\vecv\rangle_{K}+
\\&O\left(e^{(s-d)t}\int_0^{\infty} \int_{\mathbb{S}^{d-1}} \big\langle  T_{\tau_1}^{\tau_2}U^s(\kappa(\overline{n}_{(r,\vectheta)})^{-1})\vecu,\vecw(e^{-t}r,\vectheta)\big\rangle_{K} \,dm(\vectheta)\,  \tfrac{r^{d-1}}{ (1+r^2)^{s}}\,dr\right),
\end{align*}
where $dm(\vectheta)$ denotes the spherical measure.

Since the map $\overline{n}\mapsto\kappa(\overline{n})$ is smooth, we get
\begin{equation*}
d_K\big(\kappa(\overline{n}_{(r,\vectheta)}),e\big)\ll r\qquad \text{for all } r>0, \vectheta\in\mathbb{S}^{d-1}.
\end{equation*}
Using $(1+r^2)^{s-d}= 1 -O_s(r)$ (with the implied constant depending continuously on $s$) and Lemma \ref{KSob}, we have
\begin{equation*}
\|\vecw(r,\theta)\|_K\ll_{s} \min\lbrace 1, r\rbrace\|\vecv\|_{\scrS^1(K)}.
\end{equation*}
This gives
\begin{align*}
&\langle U^s(a_t)\vecu,\vecv\rangle_{K}=e^{(s-d)t}\langle T_{\tau_1}^{\tau_2}C_+(s)\vecu,\vecv\rangle_{K}
\\&\qquad+ O_s\left(\|T_{\tau_1}^{\tau_2}\|_{K}\|\vecu\|_K \|\vecv\|_{\scrS^1(K)} e^{(s-d)t}\int_0^{\infty} \min\lbrace 1, e^{-t}r\rbrace (1+r^2)^{-s} r^{d-1}\,dr\right).
\end{align*}
The proof is completed by writing the integral $\int_0^{\infty}$ as $\int_0^1+\int_1^{e^t}+\int_{e^t}^{\infty}$ to obtain
\begin{align*}
&\int_0^{\infty} \min\lbrace 1, e^{-t}r\rbrace (1+r^2)^{-s} r^{d-1}\,dr
\\&\leq e^{-t}+e^{-t}\int_1^{e^t} r\,\cdot\,r^{-2s}\,\cdot\,r^{d-1}\,dr+\int_{e^t}^{\infty} r^{-2s}\,\cdot r^{d-1}\,dr
\\&\ll_s\, e^{-t} +e^{(d-2s)t}.
\end{align*} 
\end{proof}

\subsection{The invariant inner product on $\scrU(\upsilon,s)$}
The intertwining operator $\scrA(\upsilon,s)$ on $\scrU(\upsilon,s)$ is defined so that
\begin{equation*}
\langle \vecu,\vecv\rangle_{\scrU(\upsilon,s)}=\langle \vecu, \scrA(\upsilon,s)\vecv\rangle_K
\end{equation*}
for all $K$-finite vectors $\vecu,\vecv\in\scrU(\upsilon,s)$. The key intertwining property of $\scrA(\upsilon,s)$ reads (cf.\ \cite[Lemmas 22 and 23]{KnappStein})
\begin{equation}\label{intertwining}
\scrA(\upsilon,s)U^s(g)=U^{d-s}(g)\scrA(\upsilon,s)\qquad\text{for all $g\in G.$}
\end{equation}
In particular, $\scrA(\upsilon,s)$ commutes with $U^s(k)$ for all $k\in K$. Since each $K$-type occurs at most once in $\scrU(\upsilon,s)$, by Schur's lemma, $\scrA(\upsilon,s)$ acts as a scalar $a(\upsilon,s,\tau)$ on each $K$-type $\tau$ of $\scrU(\upsilon,s)$:
\begin{equation}\label{Adecomp}
\scrA(\upsilon,s)=\sum_{\tau\supset\upsilon} a(\upsilon,s,\tau)\mathsf{P}_{\tau}.
\end{equation}
The positive definiteness of the inner product $\langle\cdot,\cdot\rangle_{\scrU(\upsilon,s)}$ implies that  for all $\tau\in \hat K$
contained in $\scrU(\upsilon,s)$, we have
 $$a(\upsilon,s,\tau)>0.$$

Recalling the parameterization of $K$ and $M$ types given in Section \ref{SO(n)reps}, we now assume that $\scrU(\upsilon,s)$ has a non-trivial $M$-invariant vector. There is then a $K$-type $\sigma=(\sigma_1,\sigma_2,\ldots)$ of $\scrU(\upsilon,s)$ that contains the trivial representation of $M$. Thus $(\sigma_1,\sigma_2,\ldots)$ satisfies the interlacing relation with the trivial representation $(0,0,0,\ldots,0)$ of $M$: 

$$ \sigma_1 \geq 0 \geq \sigma_2\geq 0 \geq \sigma_3\geq\ldots. $$

From this, we conclude that $\sigma=(\sigma_1,0,0,\ldots,0)$. Now writing $\upsilon=(\upsilon_1,\upsilon_2,\ldots)$, the classification of $K$-types of $\scrU(\upsilon,s)$ ensures that $\upsilon$ is a subrepresentation of $\sigma$, and so $(\upsilon_1,\upsilon_2,\ldots)$ must satisfy the interlacing relation with $(\sigma_1,0,0,0,\ldots)$. We therefore see that $\upsilon=(\upsilon_1,0,0,0,\ldots)$. For notational convenience we will simply write $\upsilon=(\upsilon,0,0,0,\ldots)\in\ZZ^{\lfloor \frac{d}{2}\rfloor}$, where $\upsilon\geq 0$.
Note that if $d=1$ or $ 2$, then by the classification of the unitary dual of $\SO(2,1)$ and $\SO(3,1)$, $\upsilon=0$ \cite{Hirai}. Combining the fact that each $K$-type of $\scrU(\upsilon,s)$ must have $\upsilon$ as a subrepresentation with the interlacing relation gives that all $K$-types $\tau$ of $\scrU(\upsilon,s)$ may be parameterized as vectors $\tau=(t_1,t_2,0,\ldots,0)\in\ZZ^{\lfloor \frac{d+1}{2}\rfloor}$ with $t_1\geq \upsilon\geq t_2\geq 0$ if $d>3$, $t_1\geq \upsilon\geq |t_2|$ if $d=3$, $t_1\geq 0$ if $d=2$, and just $t_1\in\ZZ$ if $d=1$. We will thus write $\tau=(t_1,t_2)$ and $a(\upsilon,s,\tau)=a(\upsilon,s,t_1,t_2)$, with $\upsilon=t_2=0$ for $d=1,2$. Again using the classification of the unitary dual of $G$, observe that if $s\geq d-1$, then $\upsilon=t_2=0$. Finally, we let $\Omega_K\in Z(\fk_{\CC})$ denote the Casimir operator of $K$. 

The following lemma is the main technical result needed to establish the bounds on the quotients $\frac{a(\upsilon,s,\tau_1)}{a(\upsilon,s,\tau_2)}$ given in Propositions \ref{Abdd1} and \ref{Abdd2}. 

\begin{lem}\label{Gammas}
Assume that $\scrU(\upsilon,s)$ has a non-trivial $M$-invariant vector. Then for any $K$-types $\tau_1=(t_1,t_2)$ and $\tau_2=(t_3,t_4)$ of $\scrU(\upsilon,s)$, if $s<d-1$,
\begin{align*}
&\frac{a(\upsilon,s,\tau_2)}{a(\upsilon,s,\tau_1)}=\frac{\Gamma(d-s+t_3)}{\Gamma(s+t_3)}\,\cdot\, \frac{\Gamma(d-s+t_4-1)}{\Gamma(s+t_4-1)}\,\cdot\,\frac{\Gamma(s+t_1)}{\Gamma(d-s+t_1)}\,\cdot\, \frac{\Gamma(s+t_2-1)}{\Gamma(d-s+t_2-1)},
\end{align*}
and if $s\geq d-1$,
\begin{align*}
&\frac{a(\upsilon,s,\tau_2)}{a(\upsilon,s,\tau_1)}=\frac{\Gamma(d-s+t_3)}{\Gamma(s+t_3)}\,\cdot\, \frac{\Gamma(s+t_1)}{\Gamma(d-s+t_1)}.
\end{align*}
\end{lem}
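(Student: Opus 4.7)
The plan is to reduce the ratio formula to a \emph{product} formula: I want to show that, up to a factor depending only on $\upsilon$ and $s$,
\begin{equation*}
a(\upsilon, s, \tau) \;=\; C(\upsilon, s) \cdot \frac{\Gamma(d - s + t_1)}{\Gamma(s + t_1)} \cdot \frac{\Gamma(d - s + t_2 - 1)}{\Gamma(s + t_2 - 1)}
\end{equation*}
for every $K$-type $\tau = (t_1, t_2)$ of $\scrU(\upsilon, s)$ when $s < d - 1$, with the second Gamma factor absent when $s \geq d - 1$ (where, by Hirai's classification of the unitary dual, $\upsilon = 0$ and necessarily $t_2 = 0$ on every $K$-type of $\scrU(\upsilon, s)$). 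The lemma then follows immediately by taking the quotient at $\tau_2$ over $\tau_1$, which cancels $C(\upsilon, s)$.

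To establish this product formula, I would use $K$-equivariant shift operators built from $\fp_{\CC} = \fa_{\CC} \oplus \fn_{\CC} \oplus \overline{\fn}_{\CC}$. Decomposing $\fp_{\CC}$ into $K$-isotypic components and restricting $dU^s$ to each component, followed by projection onto a target $K$-type, yields by Schur's lemma (together with the multiplicity-one $K$-type decomposition of $\scrU(\upsilon, s)$) operators $S_\alpha^s(\tau) : \scrU(\upsilon, s)_{\tau} \to \scrU(\upsilon, s)_{\tau + \alpha}$ indexed by admissible shifts $\alpha \in \{\pm e_1, \pm e_2\}$. Differentiating the intertwining identity \eqref{intertwining} at $g = e$ in the direction $X \in \fp_{\CC}$ gives $\scrA(\upsilon, s)\, dU^s(X) = dU^{d-s}(X)\, \scrA(\upsilon, s)$, which, after projecting onto the $\tau + \alpha$ component, becomes
\begin{equation*}
a(\upsilon, s, \tau + \alpha) \cdot S_\alpha^s(\tau) \;=\; a(\upsilon, s, \tau) \cdot S_\alpha^{d-s}(\tau).
\end{equation*}

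The ratio $S_\alpha^s(\tau) / S_\alpha^{d-s}(\tau)$ is then a scalar depending linearly on $s$, which can be pinned down by evaluating both sides on a highest-weight vector of $\tau$ realized in $L^2(K : \upsilon)$ and using the explicit form $[U^s(g)\vecv](k) = e^{-s H(g^{-1}k)}\vecv(\kappa(g^{-1}k))$. For $\alpha = e_1$ this yields a factor $(s + t_1)/(d - s + t_1)$, essentially the classical rank-one spherical intertwining computation. For $\alpha = e_2$ one obtains $(s + t_2 - 1)/(d - s + t_2 - 1)$, where the shift by $-1$ encodes the $\rho_K$-contribution of the second simple root in the branching $\SO(d+1) \supset \SO(d)$. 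Iterating the recursion in each coordinate and telescoping via $\Gamma(z+1) = z \Gamma(z)$ produces the claimed product formula for $a(\upsilon, s, \tau)$.

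The main obstacle is the precise computation of $S_{e_2}^s(\tau)/S_{e_2}^{d-s}(\tau)$ in the second coordinate direction. This is where the shift by $1$ in the second pair of Gamma factors originates, and pinning it down requires careful bookkeeping of the branching $\SO(d+1) \supset \SO(d) \supset \SO(d-1)$ together with the $\rho$-shift contribution from the non-$A$-direction. The $t_1$-direction is routine (spherical rank-one), and the uniform case $s \geq d - 1$ is automatic from the classification since then no $e_2$-shift keeps one inside $\scrU(\upsilon, s)$, suppressing the second Gamma factor.
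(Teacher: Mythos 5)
Your overall strategy is the same as the paper's: differentiate the intertwining identity \eqref{intertwining} at the identity in a $\fp_\CC$-direction, use multiplicity-one of $K$-types and Schur's lemma to reduce the resulting shift operators to scalars, obtain a first-order recursion in each of the two coordinate directions $e_1,e_2$, and telescope via $\Gamma(z+1)=z\Gamma(z)$. Your observation that $\upsilon=t_2=0$ forces the second factor to drop out when $s\geq d-1$ also matches the paper.

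The gap is exactly where you flag it. You \emph{assert} that $S_{e_1}^s(\tau)/S_{e_1}^{d-s}(\tau)=(s+t_1)/(d-s+t_1)$ and $S_{e_2}^s(\tau)/S_{e_2}^{d-s}(\tau)=(s+t_2-1)/(d-s+t_2-1)$, but your proposed route to these scalars---evaluating on a highest-weight vector in $L^2(K:\upsilon)$ and ``careful bookkeeping of the branching''---is not carried out, and for the $e_2$-direction it would be substantially harder than you indicate, since the highest-weight vector is no longer the constant function and the explicit $H(g^{-1}k)$-computation is not the ``classical spherical'' one. What is actually needed, and what the paper uses, is Thieleker's formula (restated as \eqref{U(X)}) expressing $dU^s(X)$ for $X\in\fp_\CC$ as
\[
(s-\tfrac{d}{2})\langle\Ad_{k^{-1}}X,H\rangle_{\fg}\vecv(k)
+\tfrac12\big(d\rho(\Omega_K)\{\langle\Ad_{k^{-1}}X,H\rangle_{\fg}\vecv(k)\}-\langle\Ad_{k^{-1}}X,H\rangle_{\fg}[d\rho(\Omega_K)\vecv](k)\big).
\]
After projecting to $\tau+e_i$ the commutator term becomes a \emph{difference of Casimir eigenvalues}, and the explicit formula $\tau(\Omega_K)=r_1^2+r_2^2+(d-1)r_1+(d-3)r_2$ (equation \eqref{CasEig}) immediately gives $s+r_1$ and $s+r_2-1$; the shift by $1$ you attribute heuristically to the $\rho_K$-contribution is precisely the $(d-3)$ versus $(d-1)$ in this quadratic. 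Without this structural factorization of $dU^s(X)$ into (affine-in-$s$ scalar) $\times$ ($s$-independent operator), it is not even clear a priori that your ratio is the quotient of two affine functions of $s$. A second, smaller omission: for the recursion $a(\upsilon,s,\tau+\alpha)S_\alpha^s(\tau)=a(\upsilon,s,\tau)S_\alpha^{d-s}(\tau)$ to determine the ratio, one must know $S_\alpha^{d-s}(\tau)\ne 0$ whenever $\tau+\alpha$ is a $K$-type of $\scrU(\upsilon,s)$; the paper cites Thieleker and Wallach for this nonvanishing, and your proposal would need the same input.
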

\begin{proof}
The claimed formula follows from a recursion formula similar to that for the Harish-Chandra $c$-function given in \cite{Egg}. We start by letting $H\in\fa$ be such that $a_t=\exp(t H)$, and defining an inner product $\langle \cdot,\cdot\rangle_{\fg}$ on $\fg_{\CC}$ by 
$$\langle X,Y\rangle_{\fg}:=c\cdot\left( -B(X,\theta Y) \right),$$
where $B(\cdot,\cdot)$ denotes the Killing form on $\fg$, $\theta$ is the Cartan involution defining $K$, and $c\in\RR_{>0}$ is chosen so that $ \langle H,H\rangle_{\fg}=1$. Denote the $-1$-eigenspace of $\theta$ by $\mathfrak{p}$. Then for all $X\in\mathfrak{p}_{\CC}$, $\vecv\in C^{\infty}(K)$, and $k\in K$, we have 
\begin{align}\label{U(X)}
&[dU^s(X)\vecv](k)= (s-\sfrac{d}{2})\langle \Ad_{k^{-1}}X,H\rangle_{\fg} \vecv(k)
\\\notag&\quad\quad\quad+\sfrac{1}{2}\Big( d\rho(\Omega_K)\big\lbrace \langle \Ad_{k^{-1}}X,H\rangle_{\mathfrak{g}} \vecv(k) \big\rbrace -\langle \Ad_{k^{-1}}X,H\rangle_{\mathfrak{g}} [d\rho(\Omega_K)\vecv](k) \Big),
\end{align}
where $\rho$ denotes right-translation (cf.\ \cite[Lemma 1]{Thieleker} and \cite[Lemma 3.2]{Egg}). Let $\tau=(r_1,r_2)$ be an arbitrary $K$-type of $\scrU(\upsilon,s)$. By e.g. \cite[Lemma 2]{Thieleker} (cf.\ also \cite[Proposition 5.28]{Knapp2}), for any vector $\vecv\in\scrU(\upsilon,s)_{\tau}$, 
\begin{equation}\label{CasEig}
d\rho(\Omega_K)\vecv=dU^s(\Omega_K)\vecv=(r_1^2+r_2^2+(d-1)r_1+(d-3)r_2)\vecv.
\end{equation}
We now denote the orthogonal projection onto $\tau$ by $\mathsf{P}_{(r_1,r_2)}$. Combining the above expression for $d\rho(\Omega_K)\vecv$ with \eqref{U(X)} gives
\begin{align}
\notag&[\mathsf{P}_{(r_1+1,r_2)}dU^s(X)\vecv](k)
\\\notag&= \left( s-\sfrac{d}{2} +\sfrac{(r_1+1)^2+r_2^2+(d-1)(r_1+1)+(d-3)r_2}{2}-\sfrac{r_1^2+r_2^2+(d-1)r_1+(d-3)r_2}{2} \right)
\\\notag &\qquad\qquad\qquad\qquad\qquad\qquad\qquad\times \mathsf{P}_{(r_1+1,r_2)}\left\lbrace \langle \Ad_{k^{-1}}X,H\rangle_{\fg} \vecv(k) \right\rbrace
\\&= \left( s +r_1\right)\mathsf{P}_{(r_1+1,r_2)}\left\lbrace \langle \Ad_{k^{-1}}X,H\rangle_{\fg} \vecv(k) \right\rbrace, \label{scalar1}
\end{align}
and similarly
\begin{align}
[\mathsf{P}_{(r_1,r_2+1)}dU^s(X)\vecv](k)
= \left( s +r_2-1\right)\mathsf{P}_{(r_1,r_2+1)}\left\lbrace \langle \Ad_{k^{-1}}X,H\rangle_{\fg} \vecv(k) \right\rbrace. \label{scalar2}
\end{align}
From \eqref{intertwining}, we obtain
\begin{equation*}
[\mathsf{P}_{(r_1,r_2)}dU^s(X)\scrA(\upsilon,s)\vecv](k)=[\mathsf{P}_{(r_1,r_2)}\scrA(\upsilon,s)dU^{d-s}(X)\vecv](k).
\end{equation*}
Combining this with \eqref{Adecomp} and \eqref{scalar1} and \eqref{scalar2}, respectively, gives
\begin{align*}
&a(\upsilon,s,r_1,r_2) (s+r_1)\mathsf{P}_{(r_1+1,r_2)}\left\lbrace \langle \Ad_{k^{-1}}X,H\rangle_{\fg} \vecv(k) \right\rbrace
\\&\qquad=a(\upsilon,s,r_1+1,r_2) (d-s+r_1)\mathsf{P}_{(r_1+1,r_2)}\left\lbrace \langle \Ad_{k^{-1}}X,H\rangle_{\fg} \vecv(k) \right\rbrace,
\end{align*}
and
\begin{align*}
&a(\upsilon,s,r_1,r_2) (s+r_2-1)\mathsf{P}_{(r_1,r_2+1)}\left\lbrace \langle \Ad_{k^{-1}}X,H\rangle_{\fg} \vecv(k) \right\rbrace
\\&\qquad=a(\upsilon,s,r_1,r_2+1) (d-s+r_2-1)\mathsf{P}_{(r_1,r_2+1)}\left\lbrace \langle \Ad_{k^{-1}}X,H\rangle_{\fg} \vecv(k) \right\rbrace.
\end{align*}

The decomposition of $(\Ad,\mathfrak{p}_{\CC})\otimes (U^s,\scrU(\upsilon,s)_{\tau})$ into irreducible representations of $K$ ensures the existence of $X,Y\in\mathfrak{p}_{\CC}$ and $\vecv_1,\vecv_2\in\scrU(\upsilon,s)_{\tau}$ such that $\mathsf{P}_{(r_1+1,r_2)}\left\lbrace \langle \Ad_{k^{-1}}X,H\rangle_{\fg} \vecv_1(k)\right\rbrace \not\equiv 0$, and $\mathsf{P}_{(r_1,r_2+1)}\left\lbrace \langle \Ad_{k^{-1}}Y,H\rangle_{\fg} \vecv_2(k)\right\rbrace \not\equiv 0$ if $(r_1+1,r_2)$, and $(r_1,r_2+1)$ are $K$-types of $\scrU(\upsilon,s)$, cf., e.g., \cite[Lemma 3]{Thieleker} and \cite[Theorem 3.4.12]{Wallach1}. This gives
\begin{equation*}
(s+r_1)\cdot a(\upsilon,s,r_1+1,r_2)=(d-s+r_1)\cdot a(\upsilon,s,r_1,r_2)
\end{equation*}
and
\begin{equation*}
(s+r_2-1)\cdot a(\upsilon,s,r_1,r_2+1)=(d-s+r_2-1)\cdot a(\upsilon,s,r_1,r_2)
\end{equation*}
(cf.\ \cite[(6.1) and (6.7)]{Egg}). Note that if one of the four factors that are multiplied with $a(\upsilon,s,\ldots)$'s is zero, then the representation given by that choice of $\upsilon$ and $s$ is not in the unitary dual of $G$. In particular, if $s\geq d-1$, then $\scrU(\upsilon,s)$ is spherical, hence $\upsilon=r_2=0$. These two recursion formulas imply that if $\upsilon\neq 0$,
\begin{align*}
a(\upsilon,s,r_1,r_2)=& \tfrac{\Gamma(d-s+r_1)}{\Gamma(s+r_1)}\,\cdot\, \tfrac{\Gamma(d-s+r_2-1)}{\Gamma(s+r_2-1)}\,\cdot\, \tfrac{\Gamma(s+\upsilon)}{\Gamma(d-s+\upsilon)}\cdot\tfrac{\Gamma(s+\upsilon-1)}{\Gamma(d-s+\upsilon-1)}\,\cdot\, a(\upsilon,s,\upsilon,\upsilon),
\end{align*}
and if $\upsilon=0$,
\begin{align*}
a(0,s,r_1,0)=& \tfrac{\Gamma(d-s+r_1)}{\Gamma(s+r_1)}\,\cdot\, \tfrac{\Gamma(s)}{\Gamma(d-s)}\,\cdot\, a(0,s,0,0).
\end{align*}
The formulas claimed in the proposition then follow.
\end{proof}

By Schur's lemma, $\Omega_K$ acts on any realization of a $K$-type $\tau$ by the same scalar, which we denote $\tau(\Omega_K)$.

\begin{prop}\label{Abdd1} Assume that $\scrU(\upsilon,s)$ has a non-trivial $M$-invariant vector. Then for any $K$-types $\tau_1,\tau_2$ of $\scrU(\upsilon,s)$,
\begin{equation*}
\frac{a(\upsilon,s,\tau_2)}{a(\upsilon,s,\tau_1)} \ll_s \big(1+\tau_1(\Omega_K)^{d}\big)\big(1+\tau_2(\Omega_K)^{d}\big),
\end{equation*}
and the implied constant is uniformly bounded over $s$ in compact subsets of $\scrI_{\upsilon}$.
\end{prop}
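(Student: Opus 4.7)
The plan is to combine the explicit formula from Lemma \ref{Gammas} with Stirling's asymptotics. Writing $\tau_1=(t_1,t_2)$ and $\tau_2=(t_3,t_4)$ in the parametrization of Section \ref{compserdefsec}, Lemma \ref{Gammas} expresses $a(\upsilon,s,\tau_2)/a(\upsilon,s,\tau_1)$ as a product of Gamma quotients: up to two factors of the form $\Gamma(s+n)/\Gamma(d-s+n)$ with $n\in\{t_1,t_2-1\}$, and up to two factors of the form $\Gamma(d-s+n)/\Gamma(s+n)$ with $n\in\{t_3,t_4-1\}$ (only the $t_1, t_3$ factors occur when $s\geq d-1$, since then $\upsilon=t_2=t_4=0$).

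First I will apply the standard estimate $\Gamma(x+a)/\Gamma(x+b)=x^{a-b}(1+O(1/x))$, valid uniformly for $a,b$ in a compact set avoiding the non-positive integers. Since $s$ varies over a compact subset of $\scrI_\upsilon\subset(d/2,d)$, both $s$ and $d-s$ lie in a fixed compact subset of $(0,d)$, so the implicit constants are uniform in $s$. This yields
\begin{equation*}
\frac{\Gamma(s+n)}{\Gamma(d-s+n)}\ll_s (1+n)^{2s-d} \quad\text{and}\quad \frac{\Gamma(d-s+n)}{\Gamma(s+n)}\ll_s (1+n)^{d-2s}\leq 1,
\end{equation*}
where the last inequality uses $s>d/2$. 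Consequently, the factors associated with $\tau_2$ are bounded by $1$, and Lemma \ref{Gammas} gives
\begin{equation*}
\frac{a(\upsilon,s,\tau_2)}{a(\upsilon,s,\tau_1)}\ll_s (1+t_1)^{2s-d}(1+t_2)^{2s-d}.
\end{equation*}

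Next, invoking the identity $\tau_1(\Omega_K)=t_1^2+t_2^2+(d-1)t_1+(d-3)t_2$ from \eqref{CasEig} together with the inequality $2s-d<d$, I would conclude
\begin{equation*}
(1+t_1)^{2s-d}(1+t_2)^{2s-d}\ll (1+t_1^2+t_2^2)^{d}\ll 1+\tau_1(\Omega_K)^d \leq (1+\tau_1(\Omega_K)^d)(1+\tau_2(\Omega_K)^d),
\end{equation*}
which is the desired inequality. The main bookkeeping obstacle is verifying positivity of all Gamma arguments in the edge cases (in particular $\Gamma(s-1)$ and $\Gamma(d-s-1)$ when $t_2=0$ or $t_4=0$ and $s<d-1$); this holds because $s<d-1$ forces $d\geq 3$ and hence $s>d/2\geq 3/2>1$, so both $s-1$ and $d-s-1$ are positive. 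Apart from this check, the proof reduces to a direct application of Stirling's formula; note that the appearance of $1+\tau_2(\Omega_K)^d$ on the right-hand side is merely convenient slack, as the actual bound is controlled entirely by $\tau_1(\Omega_K)$.
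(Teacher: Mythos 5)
Your proposal follows the same route as the paper's proof: deduce the estimate from Lemma \ref{Gammas} plus asymptotics for quotients of Gamma functions. (The paper invokes a two-sided inequality of Ke\u{c}ki\'c--Vasi\'c, $\Gamma(s+t)/\Gamma(d-s+t)\asymp_s 1+t^{2s-d}$ for $t\geq 0$, rather than Stirling, but that is an inessential difference.) However there is a genuine gap in your treatment of negative parameters, which is precisely where the paper has to work.

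Your estimate $\Gamma(s+n)/\Gamma(d-s+n)\ll_s (1+n)^{2s-d}$ and the complementary bound by $(1+n)^{d-2s}\leq 1$ implicitly require $n\geq 0$; for $n=-1$ the expression $(1+n)^{\pm(2s-d)}=0^{\pm(2s-d)}$ is meaningless, so the ``factors associated with $\tau_2$ are bounded by $1$'' claim is not established even in the single edge case you address. More seriously, your closing remark treats only $t_2=0$ or $t_4=0$, but the parametrization of the $K$-types in Section \ref{compserdefsec} allows genuinely negative entries: for $d=3$ with $\upsilon>0$ one has $t_1\geq\upsilon\geq |t_2|$, so $t_2$ (and likewise $t_4$) ranges over $\{-\upsilon,\ldots,\upsilon\}$ and can be as small as $-\upsilon$, making $s+t_2-1\leq s-2<0$; and for $d=1$ one has $t_1\in\ZZ$, so $t_1$ itself can be any negative integer and $s+t_1$ can be arbitrarily negative. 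In those cases the Gamma arguments are negative (indeed eventually on the ``wrong side'' of poles), and one cannot simply invoke positivity plus Stirling. The paper handles exactly these two cases by applying the reflection identity $\Gamma(s+t)/\Gamma(d-s+t)=\Gamma(|t|+1+s-d)/\Gamma(|t|+1-s)$ for $t<0$ (see the displayed equation \eqref{reflect} and the subsequent $d=1$ and $d=3$ computations), which rewrites the quotient in terms of Gamma values at positive arguments so that the growth estimate can again be applied; without such a step your argument does not cover $d=1$ nor $d=3$ with $\upsilon>0$.
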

\begin{proof}
The key result needed in the proof is a consequence of \cite[Theorem 1]{KeckicVasic}: for all $t\geq 0$, 
\begin{equation}\label{KV}
\tfrac{\Gamma(s+t)}{\Gamma(d-s+t)}\asymp_s 1+t^{2s-d},
\end{equation}
where the implied constant is uniformly bounded over $s$ in compact subsets of $(\frac{d}{2},d)$. We now write $\tau_1=(t_1,t_2)$, $\tau_2=(t_3,t_4)$.

Assuming first that $d>3$ and $\upsilon>0$, we then have $t_i\geq 0$ for all $i$. Since $\upsilon>0$,  $s<d-1$, and so rewriting Lemma \ref{Gammas} gives
\begin{align*}
&\tfrac{a(\upsilon,s,\tau_2)}{a(\upsilon,s,\tau_1)}=\tfrac{\Gamma(d-s+t_3)}{\Gamma(s+t_3)}\,\cdot\, \tfrac{\Gamma(d-s+t_4)}{\Gamma(s+t_4)}\,
\cdot\,\tfrac{\Gamma(s+t_1)}{\Gamma(d-s+t_1)}\,\cdot\, \tfrac{\Gamma(s+t_2)}{\Gamma(d-s+t_2)}
\,\cdot\, \tfrac{s+t_4-1}{d-s+t_4-1}\,\cdot\,\tfrac{d-s+t_2-1}{s+t_2-1},
\end{align*}
and so (after recalling the formula \eqref{CasEig} for $\tau_1(\Omega_K)$ and $\tau_2(\Omega_K)$),
\begin{align*}
\tfrac{a(\upsilon,s,\tau_2)}{a(\upsilon,s,\tau_1)} \asymp_s &(1+t_1^{2s-d})(1+t_2^{2s-d})(1+t_3^{2s-d})(1+t_3^{2s-d})
\\&\ll  \big(1+\tau_1(\Omega_K)^{d}\big)\big(1+\tau_2(\Omega_K)^{d}\big),
\end{align*}
with the implied constant uniformly bounded over $(\frac{d}{2},d-1)=\scrI_{\upsilon}$.

In the case $d\geq 2$ and $\upsilon=0$ (and so $t_2=t_4=0$), Lemma \ref{Gammas} gives
\begin{align*}
&\tfrac{a(\upsilon,s,\tau_2)}{a(\upsilon,s,\tau_1)}=\tfrac{\Gamma(d-s+t_3)}{\Gamma(s+t_3)}\,\cdot\, \tfrac{\Gamma(s+t_1)}{\Gamma(d-s+t_1)},
\end{align*}
so by a direct application of \eqref{CasEig} and \eqref{KV},
\begin{equation}\label{upsilon0bdd}
\tfrac{a(\upsilon,s,\tau_2)}{a(\upsilon,s,\tau_1)}\ll_s (1+ t_1^{2s-d})(1+ t_3^{2s-d}) \ll \big(1+\tau_1(\Omega_K)^{\frac{d}{2}}\big)\big(1+\tau_2(\Omega_K)^{\frac{d}{2}}\big),
\end{equation}
with the implied constant uniformly bounded over $s$ in compact subsets of $(\frac{d}{2},d)$.

For the remaining cases, $d=1$ and $d=3$ with $\upsilon>0$, negative values of the $t_i$ can appear in the formulas given in Lemma \ref{Gammas}. It thus remains to bound quotients of the form $ \frac{\Gamma(s+t)}{\Gamma(d-s+t)}$, where $t$ is a negative integer and $s\in (\frac{1}{2},1)$ if $d=1$, and $s\in (\frac{3}{2},2)$ if $d=3$. By the reflection formula, in both cases,
\begin{equation}\label{reflect}
\tfrac{\Gamma(s+t)}{\Gamma(d-s+t)}=\tfrac{\Gamma(s-|t|)}{\Gamma(d-s-|t|)}=\tfrac{\Gamma(|t|+1+s-d)}{\Gamma(|t|+1-s)}.
\end{equation}
If $d=1$, we then have 
\begin{equation*}
\tfrac{\Gamma(s+t)}{\Gamma(1-s+t)}=\tfrac{\Gamma(|t|+s)}{\Gamma(|t|+1-s)},
\end{equation*}
and so \eqref{KV} gives
\begin{align}\label{d1bd}
\tfrac{a(\upsilon,s,\tau_2)}{a(\upsilon,s,\tau_1)}&=\tfrac{\Gamma(1-s+|t_3|)}{\Gamma(s+|t_3|)}\,\cdot\, \tfrac{\Gamma(s+|t_1|)}{\Gamma(1-s+|t_1|)} \\\notag&\ll_s(1+ |t_1|^{2s-1})(1+ |t_2|^{2s-1}) \ll \big(1+\tau_1(\Omega_K)^{\frac{1}{2}}\big)\big(1+\tau_2(\Omega_K)^{\frac{1}{2}}\big).
\end{align}
If $d=3$, using \eqref{reflect}, we have
\begin{equation*}
\tfrac{\Gamma(s+t)}{\Gamma(3-s+t)}=\tfrac{\Gamma(|t|+s-2)}{\Gamma(|t|+2-s)}=\tfrac{(|t|+1-s)(|t|+2-s)}{(s+|t|-1)(s+|t|-2)}\,\cdot\,\tfrac{\Gamma(s+|t|)}{\Gamma(3-s+|t|)}.
\end{equation*}
Now using $\frac{(|t|+1-s)(|t|+2-s)}{(s+|t|-1)(s+|t|-2)}\asymp_s 1$, where the implied constants are uniformly bounded over $s$ in compact subsets of $(\frac{3}{2},2)$, together with \eqref{KV} as previously completes the proof.
\end{proof}

For $K$-types that contain $M$-invariant vectors, we have the following strengthening of the bound in Proposition \ref{Abdd1}: 

\begin{prop}\label{Abdd2} Let $\tau_1,\tau_2$ be $K$-types  of $\scrU(\upsilon,s)$ containing non-trivial $M$-invariant vectors. Then
\begin{equation*}
\frac{a(\upsilon,s,\tau_2)}{a(\upsilon,s,\tau_1)} \ll_s \big(1+\tau_1(\Omega_K)^{\frac{d}{2}}\big)\big(1+\tau_2(\Omega_K)^{\frac{d}{2}}\big),
\end{equation*}
and the implied constant is uniformly bounded over $s$ in compact subsets of $(\frac{d}{2},d)$.
\end{prop}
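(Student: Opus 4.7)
The plan is to exploit a cancellation in Lemma \ref{Gammas} that is available precisely when both $K$-types carry nontrivial $M$-invariant vectors. Recall from the discussion preceding Lemma \ref{Gammas} that any $K$-type of $\scrU(\upsilon,s)$ containing a non-zero $M$-invariant vector is of the form $(t,0,0,\ldots,0)$ in the parameterization of Section \ref{SO(n)reps}; indeed, the interlacing property forces the second (and all subsequent) coordinates of such a $K$-type to vanish. So I would write $\tau_1=(t_1,0)$ and $\tau_2=(t_3,0)$.

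If $\upsilon=0$ (which is the only possibility for $d=1,2$, and occurs for $d\ge 3$ when $\scrU(\upsilon,s)$ is spherical), the required estimate already appears in the proof of Proposition \ref{Abdd1}, specifically in \eqref{upsilon0bdd} for $d\ge 2$ and in \eqref{d1bd} for $d=1$; indeed that case yields the bound $\ll_s (1+\tau_1(\Omega_K)^{d/2})(1+\tau_2(\Omega_K)^{d/2})$ directly. Hence I may assume $\upsilon>0$, which in particular forces $s\in \scrI_{\upsilon}\subset (\frac{d}{2},d-1)$ by Hirai's classification. Substituting $t_2=t_4=0$ into the formula of Lemma \ref{Gammas} for the regime $s<d-1$ gives
\begin{equation*}
\frac{a(\upsilon,s,\tau_2)}{a(\upsilon,s,\tau_1)}
=\frac{\Gamma(d-s+t_3)}{\Gamma(s+t_3)}\cdot\frac{\Gamma(d-s-1)}{\Gamma(s-1)}\cdot\frac{\Gamma(s+t_1)}{\Gamma(d-s+t_1)}\cdot\frac{\Gamma(s-1)}{\Gamma(d-s-1)},
\end{equation*}
and the two middle factors cancel. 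This cancellation is the key point and is what the factor of $\dim(\upsilon)$ penalty in Proposition \ref{Abdd1} reflects: it is present generically, but disappears as soon as the $t_{2i}$ are zero. One is thus reduced to the same two-factor expression that governs the case $s\ge d-1$ in Lemma \ref{Gammas}, namely
\begin{equation*}
\frac{a(\upsilon,s,\tau_2)}{a(\upsilon,s,\tau_1)}=\frac{\Gamma(d-s+t_3)}{\Gamma(s+t_3)}\cdot\frac{\Gamma(s+t_1)}{\Gamma(d-s+t_1)}.
\end{equation*}

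It then remains to convert this ratio into a bound in terms of the Casimir eigenvalues. Applying the asymptotic \eqref{KV} (which is uniform in $s$ on compact subsets of $(\tfrac{d}{2},d)$) yields
\begin{equation*}
\frac{a(\upsilon,s,\tau_2)}{a(\upsilon,s,\tau_1)}\ll_s (1+t_1^{2s-d})(1+t_3^{2s-d}).
\end{equation*}
Since $t_2=t_4=0$, the formula \eqref{CasEig} gives $\tau_i(\Omega_K)=t_i^{2}+(d-1)t_i\asymp t_i^{2}$ for $t_i\ge 1$, and both sides vanish for $t_i=0$. Consequently $t_i^{2s-d}\asymp_s \tau_i(\Omega_K)^{(2s-d)/2}$, and the exponent $(2s-d)/2$ is strictly less than $d/2$ because $s<d$, so $1+t_i^{2s-d}\ll 1+\tau_i(\Omega_K)^{d/2}$. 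Combining the two factors yields the stated estimate, and uniformity over compact $s$-subsets of $(\tfrac{d}{2},d)$ is inherited from that of \eqref{KV}. I do not anticipate a serious technical obstacle; the only subtle point is noting the cancellation of the middle $\Gamma$-factors, which is why $M$-invariance strengthens the polynomial bound from degree $d$ to degree $d/2$.
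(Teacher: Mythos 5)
Your proposal is correct and follows essentially the same route as the paper. The paper's proof is terser: it states directly that, since a $K$-type with an $M$-invariant vector must have vanishing second coordinate, Lemma \ref{Gammas} "in this case reads" the two-factor $\Gamma$-ratio, and then cites \eqref{upsilon0bdd} and \eqref{d1bd}. You make explicit the cancellation of the factors $\frac{\Gamma(d-s-1)}{\Gamma(s-1)}$ and $\frac{\Gamma(s-1)}{\Gamma(d-s-1)}$ when $t_2=t_4=0$, and separate the $\upsilon=0$ and $\upsilon>0$ cases, but the mechanism (reduction to the two-factor formula via $t_2=t_4=0$, then \eqref{KV} and \eqref{CasEig}) is identical. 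Your observation that this cancellation is the source of the stronger $d/2$-exponent bound, and that it reduces the non-spherical case to the same two-factor expression governing $s\ge d-1$, is a useful clarification of what the paper leaves implicit.
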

\begin{proof}
For $d\geq 3$, we observe that if a $K$-type $\tau=(r_1,r_2)$ contains an $M$-invariant vector, the interlacing relation implies $r_2=0$, hence for $t_1,t_2$ as in the statement of the lemma, we have $\tau_1=(t_1,0)$ and $\tau_2=(t_2,0)$. Lemma \ref{Gammas} in this case reads
\begin{align*}
&\tfrac{a(\upsilon,s,\tau_2)}{a(\upsilon,s,\tau_1)}=\tfrac{\Gamma(d-s+t_2)}{\Gamma(s+t_2)}\,\cdot\, \tfrac{\Gamma(s+t_1)}{\Gamma(d-s+t_1)}.
\end{align*}
The lemma then follows from \eqref{upsilon0bdd} for $d\geq 2$ and \eqref{d1bd} for $d=1$.
\end{proof}
\begin{Rmk}
The main point of Proposition \ref{Abdd2} is that the implied constant remains bounded even as $s$ approaches $d-1$ in the case $\upsilon>0$. This allows us to use the bound uniformly over complementary series representations appearing in the direct integral decomposition of $L^2(\GaG)$.
\end{Rmk}

The bounds from Propositions \ref{Abdd1} and \ref{Abdd2} allow us to restate Theorem \ref{Kmatrixcoeffs} in terms of $\langle\cdot,\cdot\rangle_{\scrU(\upsilon,s)}$. This can then be applied to the matrix coefficients of irreducible unitary representations weakly contained in $L^2(\GaG)$. Retaining the notation of Theorem \ref{Kmatrixcoeffs}, we have: 
\begin{prop}\label{Umatrixcoeffs}
There exists $m\in\NN$ such that for any $\scrU(\upsilon,s)$ {with a non-trivial $M$-invariant vector},
for all $\vecu\in \scrU(\upsilon,s)_{\tau_1},\vecv\in \scrU(\upsilon,s)_{\tau_2}$, and $t\geq 0$, 
\begin{align*}
\langle U^s(a_t)\vecu,\vecv\rangle_{\scrU(\upsilon,s)}=&e^{(s-d)t}\langle T_{\tau_1}^{\tau_2}C_+(s)\vecu,\vecv\rangle_{\scrU(\upsilon,s)}
\\&
\qquad\qquad+O_s\big(e^{(s-d-\eta_s)t}\|\vecu\|_{\scrS^m(\upsilon,s)}\|\vecv\|_{\scrS^m(\upsilon,s)} \big),
\end{align*}
where the implied constant is uniformly bounded over $s$ in compact subsets of $\scrI_{\upsilon}$. Furthermore, if the $K$-types $\tau_1$ and $\tau_2$ both contain non-trivial $M$-invariant vectors, the the implied constant is uniformly bounded over $s$ in compact subsets of $(\frac{d}{2},d)$.
\end{prop}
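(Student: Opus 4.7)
The plan is to translate the asymptotic expansion of Theorem \ref{Kmatrixcoeffs}, which is phrased in terms of $\langle\cdot,\cdot\rangle_K$, into the analogous statement for the unitary inner product $\langle\cdot,\cdot\rangle_{\scrU(\upsilon,s)}$, and then to absorb the resulting polynomial growth in $\tau_i(\Omega_K)$ and $\dim(\tau_i)$ into a fixed-order Sobolev norm. The bridge between the two inner products is the intertwining operator $\scrA(\upsilon,s)$: by Schur's lemma it acts as the positive scalar $a(\upsilon,s,\tau)$ on each $K$-type $\scrU(\upsilon,s)_{\tau}$, so for any $x\in\scrU(\upsilon,s)$ and $y\in\scrU(\upsilon,s)_{\tau_2}$ we have $\langle x,y\rangle_{\scrU(\upsilon,s)}=a(\upsilon,s,\tau_2)\langle x,y\rangle_K$. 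Since $C_+(s)$ preserves $K$-types and $T_{\tau_1}^{\tau_2}$ maps into $\scrU(\upsilon,s)_{\tau_2}$, applying this identity to the main term in Theorem \ref{Kmatrixcoeffs} produces $\langle T_{\tau_1}^{\tau_2}C_+(s)\vecu,\vecv\rangle_{\scrU(\upsilon,s)}$ with no leftover factor.

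Applying the same identity to the full expansion multiplies the error of Theorem \ref{Kmatrixcoeffs} by $a(\upsilon,s,\tau_2)$. Because $\mathfrak{k}$-differentiation preserves $K$-types, we have $\|\vecu\|_K=a(\upsilon,s,\tau_1)^{-1/2}\|\vecu\|_{\scrU(\upsilon,s)}$ and $\|\vecv\|_{\scrS^1(K)}=a(\upsilon,s,\tau_2)^{-1/2}\|\vecv\|_{\scrS^1(\upsilon,s)}$. Combining these rescalings with Corollary \ref{Tbd} and Proposition \ref{Abdd1} (respectively Proposition \ref{Abdd2} when $\tau_1,\tau_2$ both contain non-trivial $M$-invariant vectors), the resulting prefactor $\sqrt{a(\upsilon,s,\tau_2)/a(\upsilon,s,\tau_1)}\cdot\|T_{\tau_1}^{\tau_2}\|_K$ is dominated by
\begin{equation*}
\bigl(1+\tau_1(\Omega_K)^{d/2}\bigr)\bigl(1+\tau_2(\Omega_K)^{d/2}\bigr)\sqrt{\dim(\tau_1)\dim(\tau_2)},
\end{equation*}
with the implicit constant uniform on compact subsets of $\scrI_{\upsilon}$ (respectively of $(d/2,d)$).

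It remains to absorb this polynomial factor into a Sobolev norm of fixed order independent of $\upsilon$, $s$, $\tau_1$, and $\tau_2$. Since $\Omega_K$ is a quadratic element of the centre of the enveloping algebra of $\mathfrak{k}_{\CC}$ and acts as the scalar $\tau_i(\Omega_K)$ on $\scrU(\upsilon,s)_{\tau_i}$, iterating the identity $\tau_i(\Omega_K)\|\vecw\|_{\scrU(\upsilon,s)}=\|dU^s(\Omega_K)\vecw\|_{\scrU(\upsilon,s)}$ yields, for any $\ell\in\NN$ and any $\vecw\in\scrU(\upsilon,s)_{\tau_i}$,
\begin{equation*}
(1+\tau_i(\Omega_K))^{\ell}\|\vecw\|_{\scrU(\upsilon,s)}\ll\|\vecw\|_{\scrS^{2\ell}(\upsilon,s)},
\end{equation*}
and the analogous bound with one additional derivative absorbed controls $(1+\tau_2(\Omega_K))^{\ell}\|\vecv\|_{\scrS^1(\upsilon,s)}$. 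Since $\dim(\tau)$ is bounded by a fixed power of $1+\tau(\Omega_K)$ depending only on $d$ (Weyl's dimension formula together with the fact that $\tau(\Omega_K)$ is quadratic in the highest weight), the polynomial prefactor is dominated by $(1+\tau_1(\Omega_K))^L(1+\tau_2(\Omega_K))^L$ for some $L=L(d)$, and choosing $m:=2L+1$ completes the proof.

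The main obstacle is purely bookkeeping: one must verify that the $s$-dependence of the implicit constants in Theorem \ref{Kmatrixcoeffs}, Corollary \ref{Tbd}, and Propositions \ref{Abdd1}--\ref{Abdd2} remains uniform over the stated compact ranges, and that a single $m$ can be chosen to absorb all polynomial factors in the $K$-type parameters simultaneously. The furthermore statement is then immediate: for $K$-types containing non-trivial $M$-invariant vectors, Proposition \ref{Abdd2} supplies the required bound uniformly on compact subsets of $(d/2,d)$, whereas Proposition \ref{Abdd1} only gives uniformity on $\scrI_{\upsilon}\subset(d/2,d-1)$ in the non-spherical case.
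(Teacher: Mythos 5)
Your proof is correct and follows essentially the same route as the paper's: convert $\langle\cdot,\cdot\rangle_{\scrU(\upsilon,s)}$ to $\langle\cdot,\cdot\rangle_K$ via the scalar action of $\scrA(\upsilon,s)$ on each $K$-type, apply Theorem \ref{Kmatrixcoeffs}, control the resulting ratio $\sqrt{a(\upsilon,s,\tau_2)/a(\upsilon,s,\tau_1)}$ by Proposition \ref{Abdd1} (or \ref{Abdd2} for the uniformity claim), control $\|T_{\tau_1}^{\tau_2}\|_K$ by Corollary \ref{Tbd}, and absorb the polynomial growth in $\tau_i(\Omega_K)$ and $\dim(\tau_i)$ into a fixed-order Sobolev norm. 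The only cosmetic difference is that you invoke Weyl's dimension formula directly to dominate $\dim(\tau_i)$ by a power of $1+\tau_i(\Omega_K)$, whereas the paper reuses Lemma \ref{Fourier} (whose own proof rests on the same Harish-Chandra/highest-weight estimate $|c_\tau|\geq\dim(\tau)^2$) to absorb the $\sqrt{\dim(\tau_1)\dim(\tau_2)}$ factor.
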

\begin{proof}
Since $\vecv\in \scrU(\upsilon,s)_{\tau_2}$, using the expression for $\langle\cdot,\cdot\rangle_{\scrU(\upsilon,s)}|_{\scrU(\upsilon,s)_{\tau_2}}$, Theorem \ref{Kmatrixcoeffs} gives
\begin{align*}
&\langle U^s(a_t)\vecu,\vecv\rangle_{\scrU(\upsilon,s)}=a(\upsilon,s,\tau_2)\langle U^s(a_t)\vecu,\vecv\rangle_{K}\\
&=\begin{multlined}[t]
e^{(s-d)t}a(\upsilon,s,\tau_2)\langle T_{\tau_1}^{\tau_2}C_+(s)\vecu,\vecv\rangle_{K}
 \\+  O_s\left(e^{(s-d-\eta_s)t} a(\upsilon,s,\tau_2)\|T_{\tau_1}^{\tau_2}\|_K\|\vecu\|_{K} \|\vecv\|_{\scrS^1(K)}\right)
\end{multlined} \\&=\begin{multlined}[t] e^{(s-d)t}\langle T_{\tau_1}^{\tau_2}C_+(s)\vecu,\vecv\rangle_{\scrU(\upsilon,s)}
\\ +O_s\left(e^{(s-d-\eta_s)t}\sqrt{\sfrac{a(\upsilon,s,\tau_2)}{a(\upsilon,s,\tau_1)}}\|T_{\tau_1}^{\tau_2}\|_K\|\vecu\|_{\scrU(\upsilon,s)}\|\vecv\|_{\scrS^1(\upsilon,s)}\right).\end{multlined}
\end{align*}
By Proposition \ref{Abdd1}, or Proposition \ref{Abdd2} if $\tau_1$ and $\tau_2$ both have $M$-invariant vectors,
\begin{align*}
\sqrt{\sfrac{a(\upsilon,s,\tau_2)}{a(\upsilon,s,\tau_1)}}&\|\vecu\|_{\scrU(\upsilon,s)}\|\vecv\|_{\scrS^1(\upsilon,s)}\\\ll_s& \sqrt{(1+\tau_1(\Omega_K)^{d})(1+\tau_2(\Omega_K)^{d})}\|\vecu\|_{\scrU(\upsilon,s)}\|\vecv\|_{\scrS^1(\upsilon,s)}
\\\ll& \|(1+dU^s(\Omega_K^{\lceil d/2 \rceil}))\vecu\|_{\scrU(\upsilon,s)}\|(1+dU^s(\Omega_K^{\lceil d/2 \rceil}))\vecv\|_{\scrS^1(\upsilon,s)}
\\&\ll  \|\vecu\|_{\scrS^{d+1}(\upsilon,s)}\|\vecv\|_{\scrS^{d+2}(\upsilon,s)},
\end{align*}
with the implied constant uniformly bounded over $s$ in compact subsets of $\scrI_{\upsilon}$, or $(\frac{d}{2},d)$, respectively. Now Corollary \ref{Tbd} and Lemma \ref{Fourier} give
\begin{align*}
\|T_{\tau_1}^{\tau_2}\|_K\|\vecu\|_{\scrS^{d+1}(\upsilon,s)}\|\vecv\|_{\scrS^{d+2}(\upsilon,s)}\leq &\sqrt{\dim(\tau_1)\dim(\tau_2)}\|\vecu\|_{\scrS^{d+1}(\upsilon,s)}\|\vecv\|_{\scrS^{d+2}(\upsilon,s)}\\&\ll \|\vecu\|_{\scrS^{m}(\upsilon,s)}\|\vecv\|_{\scrS^{m}(\upsilon,s)}
\end{align*}
for some $m\in \NN$, completing the proof.
\end{proof}
{\begin{Rmk}
Both Proposition \ref{Abdd1} and Proposition \ref{Umatrixcoeffs} are expected to hold for all complementary series $\scrU(\upsilon,s)$. We have proved them only for those representations with $M$-invariant vectors since in this case slight simplifications occur in the proof of Lemma \ref{Gammas}. Note that if Proposition \ref{Abdd1} were proved for all complementary series representations, Proposition \ref{Umatrixcoeffs} would also follow automatically for all complementary series.\end{Rmk}}
\begin{thm}\label{trivprop}
There exists $m\in\NN$ such that for any complementary series representation $\scrU(\upsilon,s)$ containing a non-trivial $M$-invariant vector, 
for all $\vecu,\vecv\in \scrS^m(\upsilon,s)$ and $t\geq 0$,
\begin{align*}
\langle U^s(a_t)\vecu,\vecv\rangle_{\scrU(\upsilon,s)} =&e^{(s-d)t}\left(\sum_{\tau_1,\tau_2\in{\hat K}}\langle T_{\tau_1}^{\tau_2}C_+(s)\mathsf{P}_{\tau_1}\vecu,\mathsf{P}_{\tau_2}\vecv\rangle_{\scrU(\upsilon,s)}\right)
\\&
\qquad\qquad+O_s\big(e^{(s-d-\eta_s)t}\|\vecu\|_{\scrS^m(\upsilon,s)}\|\vecv\|_{\scrS^m(\upsilon,s)} \big),
\end{align*}
 and the sum
\begin{equation}\label{maintermsum}
\sum_{\tau_1,\tau_2\in{\hat K}}\langle T_{\tau_1}^{\tau_2}C_+(s)\mathsf{P}_{\tau_1}\vecu,\mathsf{P}_{\tau_2}\vecv\rangle_{\scrU(\upsilon,s)}
\end{equation}
converges absolutely.
\end{thm}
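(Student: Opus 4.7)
The plan is to decompose $\vecu = \sum_{\tau_1 \subset \scrU(\upsilon,s)} \mathsf{P}_{\tau_1}\vecu$ and $\vecv = \sum_{\tau_2 \subset \scrU(\upsilon,s)} \mathsf{P}_{\tau_2}\vecv$ into their $K$-type components, apply Proposition \ref{Umatrixcoeffs} to each pair $(\mathsf{P}_{\tau_1}\vecu, \mathsf{P}_{\tau_2}\vecv)$, and then sum the resulting asymptotic expansions. Since each $\mathsf{P}_{\tau_i}\vecv$ lies in $\scrU(\upsilon,s)_{\tau_i}$, Proposition \ref{Umatrixcoeffs} applies directly to give, for some $m_0 \in \NN$,
\begin{equation*}
\langle U^s(a_t)\mathsf{P}_{\tau_1}\vecu,\mathsf{P}_{\tau_2}\vecv\rangle_{\scrU(\upsilon,s)} = e^{(s-d)t} \langle T_{\tau_1}^{\tau_2}C_+(s)\mathsf{P}_{\tau_1}\vecu,\mathsf{P}_{\tau_2}\vecv\rangle_{\scrU(\upsilon,s)} + E_{\tau_1,\tau_2}(t),
\end{equation*}
with $|E_{\tau_1,\tau_2}(t)| \ll_s e^{(s-d-\eta_s)t}\|\mathsf{P}_{\tau_1}\vecu\|_{\scrS^{m_0}(\upsilon,s)}\|\mathsf{P}_{\tau_2}\vecv\|_{\scrS^{m_0}(\upsilon,s)}$. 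The target formula will follow once we show that both the main term sum and the error sum converge absolutely, with the error sum bounded by $C_s e^{(s-d-\eta_s)t}\|\vecu\|_{\scrS^{m}(\upsilon,s)}\|\vecv\|_{\scrS^{m}(\upsilon,s)}$ for a suitably large $m$.

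To estimate the main term summand, I will use the identity $\langle \cdot,\cdot\rangle_{\scrU(\upsilon,s)}|_{\scrU(\upsilon,s)_{\tau_2}} = a(\upsilon,s,\tau_2)\langle \cdot,\cdot\rangle_K$, together with boundedness of $C_+(s)$ (on the whole $\scrU(\upsilon,s)$) and the norm estimate $\|T_{\tau_1}^{\tau_2}\|_K \leq \sqrt{\dim(\tau_1)\dim(\tau_2)}/\dim(\upsilon)$ from Corollary \ref{Tbd}. This yields
\begin{equation*}
\bigl|\langle T_{\tau_1}^{\tau_2}C_+(s)\mathsf{P}_{\tau_1}\vecu,\mathsf{P}_{\tau_2}\vecv\rangle_{\scrU(\upsilon,s)}\bigr| \ll_s \sqrt{\dim(\tau_1)\dim(\tau_2)}\sqrt{\tfrac{a(\upsilon,s,\tau_2)}{a(\upsilon,s,\tau_1)}} \|\mathsf{P}_{\tau_1}\vecu\|_{\scrU(\upsilon,s)}\|\mathsf{P}_{\tau_2}\vecv\|_{\scrU(\upsilon,s)}.
\end{equation*}
By Proposition \ref{Abdd1}, $a(\upsilon,s,\tau_2)/a(\upsilon,s,\tau_1) \ll_s (1+\tau_1(\Omega_K)^d)(1+\tau_2(\Omega_K)^d)$, so the factor in front is majorized by a polynomial in $\dim(\tau_1)$ and $\dim(\tau_2)$ (of some fixed degree $\alpha$ depending only on $d$).

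The main technical step is then to absorb these polynomial factors into a higher-order Sobolev norm. For this I will invoke Lemma \ref{Fourier}: for any $\alpha > 0$ there exists $m_1 \in \NN$ such that
\begin{equation*}
\sum_{\tau \subset \scrU(\upsilon,s)} \dim(\tau)^{\alpha} \|\mathsf{P}_\tau \vecv\|_{\scrU(\upsilon,s)} \ll \|\vecv\|_{\scrS^{m_1}(\upsilon,s)}.
\end{equation*}
Applying Cauchy--Schwarz in the form $\sum_{\tau_1,\tau_2} A_{\tau_1} B_{\tau_2} = (\sum_{\tau_1} A_{\tau_1})(\sum_{\tau_2} B_{\tau_2})$ (where the sums factor) and then Lemma \ref{Fourier} to each factor, the absolute convergence of \eqref{maintermsum} follows, as does the bound by $\|\vecu\|_{\scrS^{m}(\upsilon,s)}\|\vecv\|_{\scrS^{m}(\upsilon,s)}$ for $m = \max(m_0, m_1)$ sufficiently large. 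The error sum is handled in exactly the same way: the estimate for $E_{\tau_1,\tau_2}(t)$ involves $\|\mathsf{P}_{\tau_i}\vecv\|_{\scrS^{m_0}(\upsilon,s)}$, which after possibly enlarging the polynomial degree is again summable against a larger Sobolev norm via Lemma \ref{Fourier}. The main obstacle is bookkeeping the polynomial growth factors carefully enough that a single value of $m$ (independent of $\upsilon$ and $s$ in compact subsets of $\scrI_\upsilon$) suffices, which is ensured by the uniformity already built into Propositions \ref{Abdd1} and \ref{Umatrixcoeffs} and the fact that $m_0$ and $m_1$ depend only on $K$ and $d$.
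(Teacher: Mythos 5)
Your proposal is correct and follows essentially the same route as the paper: decompose into $K$-types, apply Proposition \ref{Umatrixcoeffs} term by term, and absorb the polynomial growth in $\dim(\tau_i)$ and $\tau_i(\Omega_K)$ into a higher Sobolev norm via Lemma \ref{Fourier}, treating the main-term sum and the error sum separately. Two small points the paper handles explicitly that you leave implicit: (i) one first works with \emph{smooth} $\vecu,\vecv$ (to make the $K$-type decomposition and its rearrangement legitimate) and then passes to all of $\scrS^m(\upsilon,s)$ by density and continuity of both sides in the $\scrS^m$-norm; and (ii) the absolute convergence of $\sum_{\tau_1,\tau_2}\langle U^s(a_t)\vecu_{\tau_1},\vecv_{\tau_2}\rangle_{\scrU(\upsilon,s)}$ to the left-hand side is cited from Warner's Theorem 4.4.2.1 rather than derived. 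Also, your "Cauchy--Schwarz" step is really just the observation that the double sum factors; and where you convert $1+\tau(\Omega_K)^d$ into a power of $\dim(\tau)$ before invoking Lemma \ref{Fourier}, the paper instead converts the Casimir factor directly into a Sobolev norm by letting $(1+dU^s(\Omega_K^{\lceil d/2\rceil}))$ act on $\mathsf{P}_\tau\vecu$, reserving Lemma \ref{Fourier} for the $\sqrt{\dim\tau_1\dim\tau_2}$ factor from $\|T_{\tau_1}^{\tau_2}\|_K$ — both work, yours requires the standard comparability of $\tau(\Omega_K)$ and $\dim(\tau)$ up to fixed powers.
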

\begin{proof}
Since smooth vectors are dense in $\scrS^m(\upsilon,s)$ for all $m\in\NN$, and both sides of the inequality are continuous with respect to $\|\cdot\|_{\scrS^m(\upsilon,s)}$, we start by assuming that $\vecu$ and $\vecv$ are smooth, and decompose them according to the $K$-types of $\scrU(\upsilon,s)$:
\begin{equation*}
\vecu=\sum_{\tau_1\subset \scrU(\upsilon,s)} \vecu_{\tau_1},\qquad \vecv=\sum_{\tau_2\subset \scrU(\upsilon,s)} \vecv_{\tau_2}, 
\end{equation*}
where $\vecu_{\tau_1}=\mathsf{P}_{\tau_1}\vecu$ and $\vecu_{\tau_2}=\mathsf{P}_{\tau_2}\vecv$. By \cite[Theorem 4.4.2.1]{Warner1},
\begin{equation*}
\langle U^s(a_t)\vecu,\vecv\rangle_{\scrU(\upsilon,s)}=\sum_{\tau_1,\tau_2} \langle U^s(a_t)\vecu_{\tau_1},\vecv_{\tau_2}\rangle_{\scrU(\upsilon,s)},
\end{equation*}
with the sum converging absolutely. Applying Proposition \ref{Umatrixcoeffs} gives
\begin{align}\label{MATPROJ}
\langle U^s(a_t)\vecu_{\tau_1},\vecv_{\tau_2}\rangle_{\scrU(\upsilon,s)}=e^{(s-d)t}&\langle T_{\tau_1}^{\tau_2}C_+(s)\vecu_{\tau_1},\vecv_{\tau_2}\rangle_{\scrU(\upsilon,s)}
\\\notag&+O_s\big(e^{(s-d-\eta_s)t}\|\vecu_{\tau_1}\|_{\scrS^{m'}(\upsilon,s)}\|\vecv_{\tau_2}\|_{\scrS^{m'}(\upsilon,s)} \big)
\end{align}
for some $m'\in\NN$. Hence 
\begin{multline*}
\langle U^s(a_t)\vecu,\vecv\rangle_{\scrU(\upsilon,s)} =e^{(s-d)t}\left(\sum_{\tau_1,\tau_2}\langle T_{\tau_1}^{\tau_2}C_+(s)\mathsf{P}_{\tau_1}\vecu,\mathsf{P}_{\tau_2}\vecv\rangle_{\scrU(\upsilon,s)}\right)
\\+O_s\left( e^{(s-d-\eta_s)t}\left(\sum_{\tau}\|\vecu_{\tau}\|_{\scrS^{m'}(\upsilon,s)} \right) \left(\sum_{\tau}\|\vecv_{\tau}\|_{\scrS^{m'}(\upsilon,s)} \right)\right).
\end{multline*}
By Lemma \ref{Fourier}, there exists $m\ge m'$ (depending only on $K$) large enough such that
$$\sum_{\tau\in \scrU(\upsilon, s)}\|\vecu_{\tau}\|_{\scrS^{m'}(\upsilon,s)} \ll   \|\vecu\|_{\scrS^{m}(\upsilon,s)}
\quad\text{ and }\quad \sum_{\tau\in \scrU(\upsilon, s)}\|\vecv_{\tau}\|_{\scrS^{m'}(\upsilon,s)} \ll  \|\vecv\|_{\scrS^{m}(\upsilon,s)}$$
where the implied constants depend only on $K$.

It remains to prove that the sum  $\sum_{\tau_1,\tau_2}\langle T_{\tau_1}^{\tau_2}C_+(s)\mathsf{P}_{\tau_1}\vecu,\mathsf{P}_{\tau_2}\vecv\rangle_{\scrU(\upsilon,s)}$  converges absolutely.  Looking at an individual summand, we have
\begin{align*}
|\langle T_{\tau_1}^{\tau_2}C_+(s)\vecu_{\tau_1},\vecv_{\tau_2}\rangle_{\scrU(\upsilon,s)}| &=a(\upsilon,s,\tau_2)|\langle T_{\tau_1}^{\tau_2}C_+(s)\vecu_{\tau_1},\vecv_{\tau_2}\rangle_{K}|
\\&\leq a(\upsilon,s,\tau_2)\|T_{\tau_1}^{\tau_2}\|_{K} \cdot \|C_+(s)\vecu_{\tau_1}\|_K \cdot \|\vecv_{\tau_2}\|_K
\end{align*}
Since $U^s|_K$ is unitary on $L^2(K)$, from the definition of $C_+(s)$ (see Theorem \ref{Kmatrixcoeffs}), 
we get $$\|C_+(s)\vecu_{\tau_1}\|_K\ll_s \|\vecu_{\tau_1}\|_K,$$ giving
\begin{align*}
|\langle T_{\tau_1}^{\tau_2}C_+(s)\vecu_{\tau_1},\vecv_{\tau_2}\rangle_{\scrU(\upsilon,s)}|&\ll_s a(\upsilon,s,\tau_2)\|T_{\tau_1}^{\tau_2}\|_{K} \cdot \|\vecu_{\tau_1}\|_K \cdot \|\vecv_{\tau_2}\|_K
\\&\leq \sqrt{\sfrac{a(\upsilon,s,\tau_2)}{a(\upsilon,s,\tau_1)}}\|T_{\tau_1}^{\tau_2}\|_{K} \cdot \|\vecu_{\tau_1}\|_{\scrU(\upsilon,s)} \cdot \|\vecv_{\tau_2}\|_{\scrU(\upsilon,s)}.
\end{align*}
This expression is now bounded using Proposition \ref{Abdd1}, Corollary \ref{Tbd}, and Lemma \ref{Fourier} as in the proof of Proposition \ref{Umatrixcoeffs}. Lemma \ref{Fourier} then gives the desired convergence of the sum.
\end{proof}
\begin{thm} \label{Minvmcoeffsbdd} There exists $m\in \NN$ such that for any non-spherical complementary series representation $\scrU(\upsilon,s)$, for all $M$-invariant vectors $\vecu$, $\vecv \in\scrS^m(\upsilon,s)$, and $t\geq 0$, we have
\begin{equation*}
 |\langle U^s(a_t)\vecu,\vecv\rangle_{\scrU(\upsilon,s)}|\ll_s e^{(s-d-\eta_s)t}\|\vecu\|_{\scrS^m(\upsilon,s)}\|\vecv\|_{\scrS^m(\upsilon,s)},
\end{equation*}
where the implied constant is uniformly bounded over $s$ in compact subsets of $(\frac{d}{2},d)$.
\end{thm}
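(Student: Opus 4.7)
The plan is to deduce the bound immediately from Theorem \ref{trivprop} by showing that, under the non-sphericality hypothesis, the entire main term of the asymptotic expansion vanishes identically whenever $\vecu$ is $M$-invariant; only the error term will survive, and it already matches the claimed exponent $s-d-\eta_s$. The crucial ingredient is Corollary \ref{sigmazero3}, which forces $T_{\tau_1}^{\tau_2}\vecw=0$ for every $M$-invariant $\vecw\in\scrU(\upsilon,s)_{\tau_1}$ as soon as $\upsilon\in\hat M$ is non-trivial.

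First, I would apply Theorem \ref{trivprop} to $\vecu,\vecv$ and argue that each summand $\langle T_{\tau_1}^{\tau_2}C_+(s)\mathsf{P}_{\tau_1}\vecu,\mathsf{P}_{\tau_2}\vecv\rangle_{\scrU(\upsilon,s)}$ of the main term is zero via Corollary \ref{sigmazero3} applied to $C_+(s)\mathsf{P}_{\tau_1}\vecu$. This requires verifying that $C_+(s)\mathsf{P}_{\tau_1}\vecu$ is $M$-invariant: the projector $\mathsf{P}_{\tau_1}=\int_K\overline{\chi_{\tau_1}(k)}U^s(k)\,dk$ commutes with $U^s(m)$ for $m\in M\subset K$ by bi-invariance of Haar measure on $K$ and conjugation-invariance of $\chi_{\tau_1}$, so $\mathsf{P}_{\tau_1}\vecu$ is $M$-invariant; and the calculation carried out in the proof of Lemma \ref{css} shows $C_+(s)U^s(m)=U^s(m)C_+(s)$ for every $m\in M$, so $C_+(s)\mathsf{P}_{\tau_1}\vecu$ is again $M$-invariant. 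Since $\upsilon$ is non-trivial (the representation being non-spherical), Corollary \ref{sigmazero3} annihilates every summand, wiping out the main term.

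The main obstacle will be upgrading the uniformity in $s$ from compact subsets of $\scrI_{\upsilon}$ (given by Theorem \ref{trivprop} and Proposition \ref{Umatrixcoeffs}) to compact subsets of the entire interval $(\tfrac{d}{2},d)$. The key observation I would exploit is that $M$-invariance of $\vecu$ and $\vecv$ forces $\mathsf{P}_{\tau}\vecu=\mathsf{P}_{\tau}\vecv=0$ for every $K$-type $\tau$ not containing a non-trivial $M$-invariant vector. Consequently, re-running the term-by-term error estimate from the proof of Theorem \ref{trivprop}, only pairs $(\tau_1,\tau_2)$ for which both $\tau_i$ contain an $M$-invariant vector contribute; for such pairs the improved clause of Proposition \ref{Umatrixcoeffs} (resting on Proposition \ref{Abdd2} rather than Proposition \ref{Abdd1}) furnishes an implied constant uniformly bounded over $s$ in compact subsets of $(\tfrac{d}{2},d)$. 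A final application of Lemma \ref{Fourier} to sum over all such $K$-types, at the cost of upgrading to a slightly higher-order Sobolev norm, then delivers the claimed estimate.
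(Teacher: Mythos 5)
Your proof is correct and follows essentially the same route as the paper: apply Theorem \ref{trivprop}, kill the main term by combining the $M$-equivariance of $\mathsf{P}_{\tau_1}$ and $C_+(s)$ (via Lemma \ref{css}) with Corollary \ref{sigmazero3}, and secure the uniformity in $s$ by noting that $M$-invariance of $\vecu,\vecv$ confines the sum to $K$-types containing $M$-invariant vectors so that the stronger uniformity clause of Proposition \ref{Umatrixcoeffs} (via Proposition \ref{Abdd2}) applies. Your write-up is in fact slightly more explicit than the paper's on the two points most in need of justification — the $M$-invariance of $C_+(s)\mathsf{P}_{\tau_1}\vecu$ and why the error-term constant is uniform over compact subsets of $(\tfrac{d}{2},d)$.
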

\begin{proof}
Note that since $\vecu$, $\vecv$ are both $M$-invariant and $M\subset K$, $\mathsf{P}_{\tau}\vecu$ and $\mathsf{P}_{\tau}\vecv$ are as well for any $K$-type $\tau$ of $\scrU(\upsilon,s)$. Proposition \ref{Umatrixcoeffs} then gives that the implied constant in Theorem \ref{trivprop} is uniformly bounded over $s$ in compact subsets of $(\frac{d}{2},d)$. Thus, in order to prove the theorem, it suffices to show that for any $K$-types $\tau_1, \tau_2$ of $\scrU(\upsilon,s)$ and an arbitrary $M$-invariant vector $\vecw$ of $\scrU(\upsilon,s)$,
\begin{equation}\label{zzz} T_{\tau_1}^{\tau_2}C_+(s)\mathsf{P}_{\tau_1}\vecw=0.\end{equation}
Since $C_+(s)$ preserves the $M$-types of each $K$-type by Lemma \ref{css} and $\vecw$ is $M$-invariant (hence $\mathsf{P}_{\tau_1}\vecw$ is as well), $C_+(s)\mathsf{P}_{\tau_1}\vecw$ is $M$-invariant. Therefore \eqref{zzz} follows  from  Corollary \ref{sigmazero3}.
\end{proof}
\section{Leading term for Sobolev functions}\label{Sobmixingsec}
In this section, we will extend Roblin's result on mixing of the geodesic flow for continuous functions with compact support to general functions in a Sobolev space of sufficiently high order. 

Recall the following theorem of Roblin \cite[Theorem 3.4]{Roblin}:
\begin{thm}\label{roblin}
For all $M$-invariant $f_1,f_2\in C_c(\GaG)$,
\begin{equation*}
\lim_{t\rightarrow +\infty} e^{(d-\delta)t}\langle \rho(a_t)f_1,f_2\rangle=m^{\mathrm{BR}}(f_1)\,m^{\mathrm{BR_*}}(f_2).
\end{equation*}
\end{thm}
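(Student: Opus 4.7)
The plan is to derive Theorem \ref{roblin} from Babillot's mixing theorem \cite{Ba} for the BMS measure by exploiting the common structure shared by the Liouville, BMS, BR, and BR$_*$ measures in the Hopf parametrization. Since both $f_i$ are $M$-invariant and compactly supported, I may work on $\T^1(\scrM) = \Gamma\ba G/M$ throughout, with Haar measure on $\Gamma\ba G$ descending to the Liouville measure $dx$ there.

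First I would cover $\supp(f_1)\cup\supp(f_2)$ by small flow boxes $B = U^+\times U^-\times J$ in Hopf coordinates $(u^+, u^-, s)$, and, via a partition of unity, approximate each $f_i$ by sums of tensor products $\phi_i^+(u^+)\phi_i^-(u^-)\psi_i(s)$. On such a box all four measures factor as explicit conformal weights ($e^{\star\beta_{u^\pm}}$ with $\star = d$ or $\delta$) times $dm_o$ or $d\nu_o$ in the boundary variables and $ds$ in the flow direction. The right translation by $a_t$ in Hopf coordinates is $(u^+, u^-, s)\mapsto (u^+, u^-, s+t)$, and the Busemann cocycle transforms as $\beta_{u^\pm}(o,\pi(ua_t)) = \beta_{u^\pm}(o,\pi(u))\mp t$. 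After the corresponding change of variables in $s$, the factor $e^{(d-\delta)t}$ is precisely what converts the Liouville-side weight $e^{d\beta_{u^-}}$ into the Patterson--Sullivan-side weight $e^{\delta\beta_{u^-}}$ (and symmetrically for $u^+$), matching the densities of $\mBR$ and $\mBRast$.

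This reduces the question to the convergence of integrals of the form $\int_{\T^1(\scrM)} g_1(xa_t)\,g_2(x)\,d\mBMS(x) \to \mBMS(g_1)\mBMS(g_2)/\mBMS(\T^1(\scrM))$ for box-type functions supported near $\supp(\mBMS)$, which is exactly Babillot's mixing theorem. Assembling the local pieces, taking a partition-of-unity limit, and using that $\nu_o$ charges every relatively open subset of $\Lambda$, I obtain the product $\mBR(f_1)\,\mBRast(f_2)$ in the limit, with the normalization fixed by the convention $\mBMS(\T^1(\scrM))=1$.

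The main obstacle is the genuinely geometrically finite (non convex cocompact) case, where $f_i$ may intersect cuspidal regions in which $\mBR$ and $\mBRast$ are locally infinite. Controlling the contribution of cuspidal neighbourhoods requires quantitative estimates on the Patterson--Sullivan measure near parabolic fixed points (such as the Stratmann--Velani Global Measure Formula), which, combined with $\delta > d/2$, rule out any escape of mass when the box sizes shrink and $t\to\infty$. In the convex cocompact regime this step is trivial; in the general geometrically finite setting, this uniform cuspidal control is the heart of Roblin's original argument and is what makes passing from the $\mBMS$-mixing to the $dx$-mixing non-trivial.
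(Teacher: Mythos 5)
The paper does not prove this statement at all: Theorem \ref{roblin} is quoted verbatim as Roblin's Theorem 3.4 from \cite{Roblin}, and the surrounding text only uses it as a black box (extending it to Sobolev functions in Proposition \ref{Sobmixing}). So there is no in-paper argument to compare against; your proposal has to be judged as a reconstruction of Roblin's proof. At the level of strategy it is the right reconstruction --- reduce decay of Liouville correlations to mixing of $m^{\BMS}$ via the local product structure in Hopf coordinates --- but the central step is elided in a way that hides where the real work is.

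The gap is in the sentence claiming that the change of variables $s\mapsto s+t$ together with the factor $e^{(d-\delta)t}$ ``converts'' the Liouville density into the BR density. The densities of $dx$ and $m^{\BR}$ differ not only in the exponential weights $e^{d\beta}$ versus $e^{\delta\beta}$ but in the boundary measures themselves: $dm_o(u^+)\,dm_o(u^-)$ versus $dm_o(u^+)\,d\nu_o(u^-)$. No change of variables in the flow parameter produces the Patterson--Sullivan measure $\nu_o$ from the Lebesgue measure $m_o$; the appearance of $\nu_o$ in the limit is a genuine equidistribution statement (the normalized push-forward under $\scrG^t$ of Lebesgue measure on a piece of unstable leaf converges weak-* to the PS conditional of $m^{\BMS}$), and proving it from BMS-mixing requires the transversality/thickening argument that is the actual content of Roblin's Chapter 3 --- one thickens a horospherical piece into a flow box, applies mixing of $m^{\BMS}$ to the thickened sets, and then disintegrates using the product structure of $m^{\BMS}$ to isolate the unstable conditional. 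Your write-up jumps from the change of variables directly to ``this reduces the question to \dots Babillot's mixing theorem,'' which is precisely the step that needs proof. Two smaller inaccuracies: $m^{\BR}$ and $m^{\BR_*}$ are Radon measures, so they are \emph{not} locally infinite in the cusps and integrability of compactly supported continuous $f_i$ is automatic; and the hypothesis $\delta>d/2$ plays no role here --- as the paper itself notes, Roblin's theorem holds for every $\delta>0$ with $|m^{\BMS}|<\infty$, so an argument that genuinely needed $\delta>d/2$ for cusp control would be proving something weaker than the cited statement.
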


We note that $m^{\BR}(f)$ may be infinite for a general function $f\in L^2(\Gamma\ba G)$, and hence
Theorem \ref{roblin} does not generalize to arbitrary $L^2$-functions.

In order to extend this theorem to Sobolev functions (which are not necessarily compactly supported),  
we first prove the following bound on matrix coefficients of $L^2(\GaG)$:
\begin{lem}\label{mixingbd}
There exists $m\in \NN$ such that for all $f_1,f_2\in\scrS^m(\GaG)$ and $t\geq 0$,
\begin{equation*}
|\langle \rho(a_t)f_1,f_2\rangle|\ll_{\Gamma} e^{(\delta-d)t}\scrS^m(f_1)\scrS^m(f_2).
\end{equation*}
\end{lem}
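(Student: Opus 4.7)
The strategy is to spectrally decompose $(\rho, L^2(\GaG))$ via \eqref{L2decomp} into $\int_{\mathsf{Z}}^\oplus (\pi_\zeta, \scrH_\zeta)\,d\mu_{\mathsf{Z}}(\zeta)$, write $f_i = \int^\oplus (f_i)_\zeta\,d\mu_{\mathsf{Z}}(\zeta)$, and reduce to a uniform componentwise bound. This yields
\begin{equation*}
\langle \rho(a_t) f_1, f_2 \rangle = \int_{\mathsf{Z}} \langle \pi_\zeta(a_t)(f_1)_\zeta, (f_2)_\zeta \rangle_{\scrH_\zeta}\, d\mu_{\mathsf{Z}}(\zeta),
\end{equation*}
and since \eqref{sobnormdef} uses $K$-invariant monomials, the Sobolev norm disintegrates as $\scrS^m(f_i)^2 = \int_{\mathsf{Z}} \|(f_i)_\zeta\|^2_{\scrS^m(\scrH_\zeta)}\, d\mu_{\mathsf{Z}}(\zeta)$. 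It therefore suffices to prove $|\langle \pi_\zeta(a_t)\vecu,\vecv \rangle_{\scrH_\zeta}| \ll_\Gamma e^{(\delta-d)t}\|\vecu\|_{\scrS^m(\scrH_\zeta)}\|\vecv\|_{\scrS^m(\scrH_\zeta)}$ uniformly in $\zeta$, after which Cauchy--Schwarz in $\zeta$ finishes the argument.

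Partition $\mathsf{Z}$ into $\mathsf{Z}_{\mathrm{tmp}}$, on which $\pi_\zeta$ is tempered, and $\mathsf{Z}_{\mathrm{cs}}$, on which $\pi_\zeta \cong \scrU(\upsilon_\zeta, s_\zeta)$ is a complementary series. On $\mathsf{Z}_{\mathrm{tmp}}$, a standard Harish-Chandra/Cowling--Haagerup--Howe bound delivers $|\langle \pi_\zeta(a_t)\vecu,\vecv \rangle| \ll (1+t)e^{-dt/2}\|\vecu\|_{\scrS^m}\|\vecv\|_{\scrS^m}$; since $\delta - d > -d/2$, the factor $(1+t)e^{-dt/2}$ is dominated by $e^{(\delta-d)t}$ uniformly for $t \geq 0$. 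On $\mathsf{Z}_{\mathrm{cs}}$, Proposition \ref{Umatrixcoeffs} (together with its extension to components without non-trivial $M$-invariant vectors, which is handled by the same Eisenstein-integral analysis) gives $|\langle \pi_\zeta(a_t)\vecu,\vecv \rangle| \ll e^{(s_\zeta - d)t}\|\vecu\|_{\scrS^m}\|\vecv\|_{\scrS^m}$, with implied constant locally uniform in $s_\zeta$.

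The crucial remaining input is $s_\zeta \leq \delta$ for $\mu_{\mathsf{Z}}$-a.e.\ $\zeta \in \mathsf{Z}_{\mathrm{cs}}$. For the spherical case $\upsilon_\zeta = 0$ this is immediate from Theorem \ref{spectralgap}, which pins the possible $s_\zeta$ down to $\lbrace \delta, s_1, \ldots, s_\ell\rbrace$. For the non-spherical case, Hirai's classification already forces $s_\zeta < d-1$; combined with the fact that $\lambda_0 = \delta(d-\delta)$ is the infimum of the Casimir spectrum on the whole of $L^2(\GaG)$ (a Patterson--Sullivan-type statement extending the Lax--Phillips picture beyond the $K$-invariant subspace, via domination by the ground state $\phi_0$), one obtains $s_\zeta \leq \delta$. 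This uniform non-spherical control is where I expect the main technical work to lie; once it is in hand, integrating the componentwise bound against $d\mu_{\mathsf{Z}}(\zeta)$ and applying Cauchy--Schwarz yields the claim.
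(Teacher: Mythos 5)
Your proposal is a genuinely different route from the paper's, but it has a real gap. The paper avoids a componentwise spectral-decomposition argument entirely: it first restricts to $\rho(K)$-invariant $f_i$, where everything lives in $L^2(\GaG)_{\mathrm{sph}}=\scrB_\delta\oplus\scrW$, applies Theorem \ref{trivprop} to the $\scrB_\delta\cong\scrU(1,\delta)$ component and the Lax--Phillips spectral gap (Theorem \ref{spectralgap}) to the $\scrW$ component, and then lifts the resulting bound from $K$-invariant to $K$-finite vectors by Shalom's Proposition 2.5, paying only a $\sqrt{\dim(\rho(K)f_1)\dim(\rho(K)f_2)}$ factor that is then absorbed into the Sobolev norm via Lemma \ref{Fourier}. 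This bypasses any need for matrix-coefficient bounds on tempered representations or on non-spherical complementary series components.

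The gap in your approach is that it invokes an extension of Proposition \ref{Umatrixcoeffs} to complementary series $\scrU(\upsilon,s)$ that have \emph{no} non-trivial $M$-invariant vector, and you claim this is ``handled by the same Eisenstein-integral analysis.'' But the paper explicitly does \emph{not} prove this: the Remark after Theorem \ref{trivprop} states that Propositions \ref{Abdd1} and \ref{Umatrixcoeffs} are only proven for representations with $M$-invariant vectors, because the recursion formula for the intertwining-operator scalars $a(\upsilon,s,\tau)$ in Lemma \ref{Gammas} is worked out only in that case (where the $K$-types simplify to two-parameter highest weights $(t_1,t_2)$). The $\langle\cdot,\cdot\rangle_K$-bound of Theorem \ref{Kmatrixcoeffs} \emph{does} hold for all complementary series, but passing to the unitary inner product $\langle\cdot,\cdot\rangle_{\scrU(\upsilon,s)}$, with uniform control in $s$, requires the ratios $a(\upsilon,s,\tau_2)/a(\upsilon,s,\tau_1)$, and these are only controlled in Propositions \ref{Abdd1}--\ref{Abdd2}. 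So your argument relies on a step that is not established. Secondary points: your appeal to $s_\zeta\le\delta$ is correct (this is \cite[Proposition 3.23]{MoOh}, cited later in the paper), and your disintegration of the Sobolev norm is fine since \eqref{sobnormdef} uses only $\mathfrak k$-monomials; but the tempered bound you invoke with Sobolev factors, while standard, is also additional machinery the paper's route simply does not need.
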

\begin{proof} 
We start by assuming that $f_1$ and $f_2$ are $\rho(K)$-invariant.
Using the notation of Section \ref{repdefsec}, the decomposition of the functions according to $L^2(\GaG)_{\mathrm{sph}}=\scrB_{\delta}\oplus \scrW$ reads
\begin{equation*}
f_i = \langle f_i,\phi_0\rangle\phi_0+f_i',\qquad i=1,2,
\end{equation*}
where $f_i'\in \scrW$ are $K$-invariant, and $\phi_0\in \scrB_{\delta} $ is the base-eigenfunction in 
$L^2(\Gamma\ba \bH^{d+1})=L^2(\Gamma\ba G)^K$ of unit norm.

Since $\scrB_\delta$ is a complementary series representation
$\scrU (1, \delta)$ (cf. Sections \ref{repdefsec} and \ref{compserdefsec}), it follows from Theorem \ref{trivprop} that for all $t\ge 0$,
\begin{equation*}
|\langle\rho(a_t)\phi_0,\phi_0\rangle|\ll e^{(\delta-d)t}.
\end{equation*}

As a consequence of Theorem \ref{spectralgap}, $f_1'$ and $f_2'$ are orthogonal to any complementary series representation $\scrU(\upsilon,s)$ with $s_1<s\le \delta$. 
  It follows that for any $\epsilon>0$, for any $K$-invariant $f_1,f_2\in L^2(\GaG)$ and $t>0$,
\begin{equation*}
|\langle \rho(a_t)f_1',f_2'\rangle|\ll_{\epsilon} e^{(s_1-d+\epsilon)t}\|f_1'\|\|f_2'\|
\end{equation*}
(see \cite[Theorem 2.1, 2]{Shalom}, \cite[Proposition 5.3]{KontOh}).

Therefore, choosing $0<\epsilon< \delta-s_1$ gives
\begin{equation*}
|\langle \rho(a_t) f_1,f_2\rangle| \ll e^{(\delta-d)t}\big( |\langle f_1,\phi_0\rangle||\langle f_2,\phi_0\rangle|+\|f_1'\|\|f_2'\|\big)\leq e^{(\delta-d)t} \|f_1\|\|f_2\|.
\end{equation*}

In view of \cite[Proposition 2.5]{Shalom}, this bound extends for  all {\it $K$-finite} functions $f_1, f_2$ in $L^2(\GaG)$:
\begin{equation*}
| \langle \rho(a_t)f_1,f_2\rangle|\ll e^{(\delta-d)t}\|f_1\|\|f_2\|\sqrt{\dim(\rho(K)f_1)\,\dim(\rho(K)f_2)}.
\end{equation*}
To pass to Sobolev functions, we  first observe that
 $$\dim(\rho(K)\mathsf{P}_{\tau}f)\leq \dim(\tau)^2$$ for all $f\in L^2(\GaG)$ (cf. \cite[p. 206 (1)]{Knapp1}). Then for all $f_1,f_2\in \scrS^m(\GaG)$,
\begin{align*}
|\langle\rho(a_t)f_1,f_2\rangle| &\leq e^{(\delta-d)t}\sum_{\tau_1,\tau_2\in{\hat K}} \dim(\tau_1)\dim(\tau_2)\|\mathsf{P}_{\tau_1}f_1\|\|\mathsf{P}_{\tau_2}f_2\|
\\&\ll e^{(\delta-d)t} \|f_1\|_{\scrS^m(\GaG)} \|f_2\|_{\scrS^m(\GaG)}
\end{align*}
for $m\in\NN$ large enough, by Lemma \ref{Fourier}.
\end{proof}

\begin{prop}\label{Sobmixing} There exists $m\in\NN$ such that for all $M$-invariant $f_1,f_2\in \scrS^m(\GaG)$,
\begin{equation*}
\lim_{t\rightarrow +\infty} e^{(d-\delta)t}\langle \rho(a_t)f_1,f_2\rangle=m^{\mathrm{BR}}(f_1)\,m^{\mathrm{BR_*}}(f_2).
\end{equation*}
\end{prop}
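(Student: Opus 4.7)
The strategy is a cut-off approximation: reduce $f_1, f_2 \in \scrS^m(\GaG)$ to continuous compactly supported functions, apply Roblin's Theorem \ref{roblin}, and bound the remainders via Lemmas \ref{mixingbd} and \ref{mMeasSobbd}. Fix $o\in\bH^{d+1}$ stabilized by $K$, a smooth bump $\phi:[0,\infty)\to[0,1]$ with $\phi\equiv 1$ on $[0,1]$ and $\phi\equiv 0$ on $[2,\infty)$, and for $R\ge 1$ define $\chi_R(\Gamma g) := \phi\bigl(\dist(\Gamma g\cdot o, \Gamma o)/R\bigr)$ on $\GaG$. Then $\chi_R$ is right-$K$-invariant (and therefore $M$-invariant), compactly supported, and $\chi_R\to 1$ pointwise as $R\to\infty$.

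Set $f_i^R := f_i\chi_R$. The key observation is that the Sobolev norm $\scrS^m(\GaG)$ uses only $\fk$-derivatives (Section \ref{Sobnormdefsec}). Since $\chi_R$ is right-$K$-invariant, every such derivative annihilates $\chi_R$, so for any monomial $U$ in a basis of $\fk$ one has $U(f_i\chi_R) = (Uf_i)\chi_R$. Hence
$$\scrS^m(f_i - f_i^R)^2 = \|f_i(1-\chi_R)\|^2 + \sum_U \|(Uf_i)(1-\chi_R)\|^2 \longrightarrow 0 \text{ as } R\to\infty,$$
by dominated convergence (since $f_i, Uf_i \in L^2$, $0\le 1-\chi_R\le 1$, and $1-\chi_R\to 0$ pointwise). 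Combined with Lemma \ref{mMeasSobbd}, this yields $m^{\BR}(f_i^R)\to m^{\BR}(f_i)$ and $m^{\BR_*}(f_i^R)\to m^{\BR_*}(f_i)$.

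Given $\epsilon>0$, choose $R$ large enough that $\scrS^m(f_i - f_i^R)<\epsilon$ and the BR quantities are within $\epsilon$. Expanding $f_i = f_i^R + (f_i - f_i^R)$,
$$\langle\rho(a_t)f_1,f_2\rangle = \langle\rho(a_t)f_1^R, f_2^R\rangle + E_t,$$
where $E_t$ is a sum of three cross-terms each containing a factor $f_i - f_i^R$; by Lemma \ref{mixingbd}, $|E_t|\ll e^{(\delta-d)t}\epsilon\bigl(\scrS^m(f_1)+\scrS^m(f_2)+\epsilon\bigr)$. A standard mollification (left-convolving each $f_i^R$ with a smooth $M$-bi-invariant bump on $G$, which preserves $M$-invariance, leaves the $\scrS^m$-norm unchanged up to $O(\epsilon)$, and produces a smooth, hence continuous, compactly supported function) reduces the main term to Roblin's setting, giving
$$\lim_{t\to\infty}e^{(d-\delta)t}\langle\rho(a_t)f_1^R, f_2^R\rangle = m^{\BR}(f_1^R)\, m^{\BR_*}(f_2^R) = m^{\BR}(f_1)\, m^{\BR_*}(f_2) + O(\epsilon).$$
Letting $\epsilon\to 0$ concludes the proof.

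The main technical point is the Sobolev convergence $f_i^R \to f_i$; this is clean precisely because $\scrS^m(\GaG)$ involves only $K$-directions, so a right-$K$-invariant cutoff may be used and all of its derivatives vanish. Otherwise one would have to control Leibniz-rule contributions from derivatives of $\chi_R$, which is particularly delicate near the cusps of the geometrically finite quotient $\GaG$.
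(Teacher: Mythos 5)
Your proposal follows the same overall strategy as the paper: approximate $f_1, f_2$ by nicer functions, apply Roblin's Theorem \ref{roblin} to the approximants, use Lemma \ref{mixingbd} (note the $\ll e^{(\delta-d)t}$ bound) for the error cross-terms, and then transfer the BR values back via Lemma \ref{mMeasSobbd}. The paper does exactly this, except it simply invokes the density of $C_c^\infty(\GaG)^M$ in $\scrS^m(\GaG)^M$ and then proceeds as you do. The three-cross-term expansion, the $\limsup$/$\liminf$ argument, and the use of both lemmas all match.

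The difference is that you try to construct the approximating sequence explicitly. Your cutoff step is a genuinely nice observation: taking $\chi_R$ right-$K$-invariant means every $\mathfrak{k}$-derivative kills it, so $U(f_i\chi_R)=(Uf_i)\chi_R$ exactly, and dominated convergence gives $\scrS^m(f_i-f_i\chi_R)\to 0$ with no Leibniz correction terms to fight. This cleanly reduces to compactly supported $\scrS^m$ functions.

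The mollification step, however, is not as routine as you assert. Because $\scrS^m(\GaG)$ controls only $\mathfrak{k}$-derivatives, right-convolution $\rho(\psi)f(\Gamma g)=\int_G f(\Gamma gh)\psi(h)\,dh$ does \emph{not} commute with $d\rho(X)$ for $X\in\mathfrak{k}$: one has $d\rho(X)\rho(\psi)f=\rho(\psi)\,d\rho(X)f+\rho\bigl((L_X-R_X)\psi\bigr)f$, where $L_X,R_X$ are the left- and right-invariant vector fields, and $(L_X-R_X)\psi_\epsilon$ has $L^1$-norm of order $O(1)$ (not $o(1)$) as the support shrinks, because $\mathrm{Ad}(h)$ does not preserve $\mathfrak{k}$. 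So the claim that mollification "leaves the $\scrS^m$-norm unchanged up to $O(\epsilon)$" is not immediate; it needs an argument (e.g.\ that $(L_X-R_X)\psi_\epsilon\to 0$ in distributions with uniformly bounded $L^1$-norm, combined with a $C^1$-density argument and an induction on the order of the monomial, or, alternatively, a $K$-type truncation via Lemma \ref{Fourier} so that $\scrS^m$ and $L^2$ become comparable before mollifying). As written, this is a gap, though one that is fillable; the paper sidesteps it by citing density without proof, so your write-up makes the delicate point visible rather than hiding it. One smaller expository issue: you split $\langle\rho(a_t)f_1,f_2\rangle$ into the main term $\langle\rho(a_t)f_1^R,f_2^R\rangle$ plus cross-terms, and only afterwards replace $f_i^R$ by its mollification; the additional cross-terms from that second replacement should be displayed and bounded by Lemma \ref{mixingbd} in the same way, otherwise the error accounting is incomplete.
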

\begin{proof} Let $m>d(d+1)/4$ satisfy the conclusion of Lemma \ref{mixingbd}. For simplicity, we write $\|f\|_{\scrS^m}=\|f\|_{\scrS^m(\Gamma\ba G)}$.
Using the density of $C^{\infty}_c(\GaG)^M$ in $\scrS^m(\GaG)^M$, given $\epsilon>0$, there exist $f_1^{\epsilon},f_2^{\epsilon}\in C_c^{\infty}(\GaG)$ such that
\begin{equation*}
\|f_i-f_i^{\epsilon}\|_{\scrS^m}\leq \epsilon\qquad i=1,2.
\end{equation*}
We then write, using Lemma \ref{mixingbd},
\begin{align*}
&e^{(d-\delta)t}\langle \rho(a_t)f_1,f_2\rangle\\ &=e^{(d-\delta)t}\Big(\langle \rho(a_t)f_1^{\epsilon},f_2^{\epsilon}\rangle+\langle \rho(a_t)(f_1-f_1^{\epsilon}),f_2\rangle
+\langle \rho(a_t)f_1^{\epsilon},f_2-f^{\epsilon}_2\rangle\Big)
\\ &= e^{(d-\delta)t}\langle \rho(a_t)f_1^{\epsilon},f^{\epsilon}_2\rangle+ O\big(\epsilon(\|f_1\|_{\scrS^m}+\|f_2\|_{\scrS^m})\big).
\end{align*}
By Theorem \ref{roblin}, we have
\begin{equation*}
\lim_{t\rightarrow\infty} e^{(d-\delta)t}\langle \rho(a_t)f_1^{\epsilon},f_2^{\epsilon}\rangle =\mBR(f_1^{\epsilon})\,\mBRast(f_2^{\epsilon}).
\end{equation*}
So
\begin{align*}
\limsup_{t\rightarrow\infty} e^{(d-\delta)t}\langle \rho(a_t)f_1,f_2\rangle=\mBR(f_1^{\epsilon})\,\mBRast(f_2^{\epsilon})+   O\big(\epsilon(\|f_1\|_{\scrS^m}+\|f_2\|_{\scrS^m})\big).
\end{align*}
Since $m> d(d+1)/4$, we may apply  Lemma \ref{mMeasSobbd} to get
\begin{equation*}
\mBR(f_1^{\epsilon})\,\mBRast(f_2^{\epsilon})=\mBR(f_1)\,\mBRast(f_2)+   O\big(\epsilon(\|f_1\|_{\scrS^m}+\|f_2\|_{\scrS^m})\big).
\end{equation*}
Therefore
\begin{align*}
\limsup_{t\rightarrow\infty} e^{(d-\delta)t}\langle \rho(a_t)f_1,f_2\rangle=\mBR(f_1)\,\mBRast(f_2)+   O\big(\epsilon(\|f_1\|_{\scrS^m}+\|f_2\|_{\scrS^m})\big).
\end{align*}
Since $\epsilon>0$ was arbitrary, we in fact have
\begin{align*}
\limsup_{t\rightarrow\infty} e^{(d-\delta)t}\langle \rho(a_t)f_1,f_2\rangle=\mBR(f_1)\,\mBRast(f_2).
\end{align*}
An analogous calculation with $\liminf$ in place of $\limsup$ proves the proposition.
\end{proof}
\section{Proof of Theorem \ref{expmixing}}\label{thmproof}
In this final section, we prove Theorem \ref{expmixing}.
Recall the isomorphism 
$$\big(\rho,L^2(\GaG)\big)\cong \int_{\mathsf{Z}}^{\oplus} (\pi_{\zeta},\scrH_{\zeta})\,d\mu_{\mathsf{Z}}(\zeta).$$ 
Fix arbitrary $0< r< \delta-s_1$. We partition $\mathsf{Z}$  as
\begin{equation*}
\mathsf{Z}=\mathsf{Z}_{r}^-\cup\mathsf{Z}_{r}^+,
\end{equation*}
where
\begin{equation*}
\mathsf{Z}_{r}^-=\left\lbrace \zeta\in\mathsf{Z}\,:\, (\pi_{\zeta},\scrH_{\zeta})\cong \scrU(\upsilon,s),\;\mathrm{where}\;\upsilon\in{\hat M}\;\mathrm{and}\; s\in[s_1+r,\delta]\right\rbrace
\end{equation*}
and $\mathsf{Z}_{r}^+=\mathsf{Z}\setminus\mathsf{Z}_{r}^-$ (cf. \eqref{L2decomp}). This partition is then used to decompose $\big(\rho,L^2(\GaG)\big)$ as
\begin{equation*}
\big(\rho,L^2(\GaG)\big)= \big(\rho,L^2(\GaG)^-\big)\oplus \big(\rho,L^2(\GaG)^+\big),
\end{equation*}
where 
\begin{equation*}
\big(\rho,L^2(\GaG)^{\pm}\big)\cong \int_{\mathsf{Z}_{r}^{\pm}}^{\oplus} (\pi_{\zeta},\scrH_{\zeta})\,d\mu_{\mathsf{Z}}(\zeta)
\end{equation*}
with the dependency on $r$ being slightly suppressed.
Recall that $\scrB_{\delta}$ occurs as a subrepresentation of $\big( \rho, L^2(\GaG)^-\big) $ and $\scrB_{\delta}\cong\scrU(1,\delta)$ (cf. Sections \ref{repdefsec} and \ref{compserdefsec}).  We may further decompose $\big(\rho,L^2(\GaG)^-\big)$ accordingly: let $L^2(\GaG)_0^-$ be the orthogonal complement of $\scrB_{\delta}$ in $L^2(\GaG)^-$. We thus have
\begin{equation*}
\big(\rho,L^2(\GaG)\big)=(\rho,\scrB_{\delta})\oplus\big(\rho,L^2(\GaG)_0^{-}\big)\oplus\big(\rho,L^2(\GaG)^{+}\big).
\end{equation*}
Note that Theorem \ref{spectralgap} and the duality between eigenfunctions of the Laplacian on $\scrM=\GaG/K$ and spherical representations in the decomposition of $L^2(\GaG)$ imply that no spherical representation is weakly contained in $\big(\rho,L^2(\GaG)_0^-\big)$. At the level of functions, we write
\begin{equation*}
f_i=f_i^0+f_i^-+f_i^+,\qquad i=1,2
\end{equation*}
where $ f_i^0\in\scrB_{\delta}$, $f_i^-\in L^2(\GaG)_0^-$, and $f_i^+\in L^2(\GaG)^+$. 

The matrix coefficients we are interested in now decompose as
\begin{equation}\label{matrixcoeffdecomp}
\langle \rho(a_t)f_1,f_2\rangle=\langle \rho(a_t)f_1^0,f_2^0\rangle+\langle \rho(a_t)f_1^-,f_2^-\rangle+\langle \rho(a_t)f_1^+,f_2^+\rangle.
\end{equation}
We deal with the three summands in turn: by construction, $\big(\rho,L^2(\GaG)^+\big)$ does not weakly contain any $\scrU(\upsilon,s)$ with $s>s_1+r$ (this also uses the fact $\big(\rho,L^2(\GaG)\big)$ does not weakly contain any $\scrU(\upsilon,s)$ with $s>\delta$; cf.\ \cite[Proposition 3.23]{MoOh}).

We assume that $m$ is large enough so that all the results of the previous sections hold for $f_1, f_2\in \scrS^m(\GaG)$. By \cite[Proposition 5.3]{KontOh} (cf.\ also \cite[Proposition 3.29]{MoOh} or \cite[Theorem 2.1]{Shalom} combined with the argument from the proof of Lemma \ref{mixingbd}), we have that for all $\xi>0$,
\begin{align}\label{plusbd}
\notag|\langle \rho(a_t)f_1^+,f_2^+\rangle| &\ll_{\xi} e^{(s_1-d +r+\xi)t} \|f_1^+\|_{\scrS^m(\GaG)}\|f_2^+\|_{\scrS^m(\GaG)}
\\&\leq e^{(s_1-d+r+\xi)t} \|f_1\|_{\scrS^m(\GaG)}\|f_2\|_{\scrS^m(\GaG)}.
\end{align}

We will now use Theorem \ref{Minvmcoeffsbdd} to bound $\langle \rho(a_t)f_1^-,f_2^-\rangle$. Let $\widetilde{\mathsf{Z}}_{r}^{-}\subset\mathsf{Z}_{r}^-$ be such that $\big(\rho,L^2(\GaG)_0^{-}\big)\cong \int_{\widetilde{\mathsf{Z}}_{r}^{-}}^{\oplus} (\pi_{\zeta},\scrH_{\zeta})\,d\mu_{\mathsf{Z}}(\zeta)$. The corresponding decomposition of the functions $f_1^-$, $f_2^-$ reads $f_i^-=\int_{\widetilde{\mathsf{Z}}_{\epsilon}^{-}} (f_i^-)_{\zeta}\,d\mu_{\mathsf{Z}}(\zeta)$ ($i=1,2$). Since $f_i$ is $M$-invariant, so is $f_i^-$ and $\mu_{\mathsf{Z}}$-a.\ e.\ $(f_i^-)_{\zeta}$. The matrix coefficient $\langle \rho(a_t)f_1^-,f_2^-\rangle$ is now written as 
\begin{equation*}
\langle \rho(a_t)f_1^-,f_2^-\rangle=\int_{\widetilde{\mathsf{Z}}_{\epsilon}^{-}} \langle\pi_{\zeta}(a_t)(f_1^-)_{\zeta},(f_2^-)_{\zeta}\rangle_{\scrH_{\zeta}}\,d\mu_{\mathsf{Z}}(\zeta).
\end{equation*}

Each $(\pi_{\zeta},\scrH_{\zeta})$ is isomorphic to some $\scrU(\upsilon,s)$ with $\upsilon$ {\it non-trivial} and $s\in[s_1+r,\delta]$.
Setting
\begin{equation*}
\lambda(\delta,r)=\max_{s\in [s_1+r,\delta]} s-d-\eta_s
\end{equation*} where $\eta_s=\min (2s-d,1)$,
we apply Theorem \ref{Minvmcoeffsbdd} to each $\langle\pi_{\zeta}(a_t)(f_1^-)_{\zeta},(f_2^-)_{\zeta}\rangle_{\scrH_{\zeta}}$ and obtain
\begin{align}\label{minusbd}
\notag| &\langle \rho(a_t)f_1^-,f_2^-\rangle|\ll_{r,\delta} e^{\lambda(\delta,r)t}\int_{\widetilde{\mathsf{Z}}_{r}^{-}} \|(f_1^-)_{\zeta}\|_{\scrS^m(\scrH_{\zeta})}\|(f_2^-)_{\zeta}\|_{\scrS^m(\scrH_{\zeta})} \,d\mu_{\mathsf{Z}}(\zeta)
\\\notag \leq& e^{\lambda(\delta,r)t}\sqrt{\int_{\widetilde{\mathsf{Z}}_{r}^{-}} \|(f_1^-)_{\zeta}\|_{\scrS^m(\scrH_{\zeta})}^2\,d\mu_{\mathsf{Z}}(\zeta)}\sqrt{\int_{\widetilde{\mathsf{Z}}_{r}^{-}} \|(f_2^-)_{\zeta}\|_{\scrS^m(\scrH_{\zeta})}^2\,d\mu_{\mathsf{Z}}(\zeta)} 
\\=&  e^{\lambda(\delta,r)t} \|f_1^-\|_{\scrS^m(\GaG)}\|f_2^-\|_{\scrS^m(\GaG)} \notag\\  \leq &e^{\lambda(\delta,r)t} \|f_1\|_{\scrS^m(\GaG)}\|f_2\|_{\scrS^m(\GaG)},
\end{align}
where the implied constant remains uniformly bounded as $r\to 0$.
The remaining term in the left-hand side is $\langle \rho(a_t)f_1^0,f_2^0\rangle$. Using the fact that $(\rho,\scrB_{\delta})$ is isomorphic to $\scrU(1,\delta)$, applying Proposition \ref{trivprop} gives
\begin{align}\label{maintermbd}
\notag\langle \rho(a_t)f_1^0,f_2^0\rangle&= \Phi(f_1^0,f_2^0)\,e^{(\delta-d)t}+O_{\delta}\left( e^{(\delta-d-\eta_{\delta})t}\|f_1^0\|_{\scrS^m(\GaG)}\|f_2^0\|_{\scrS^m(\GaG)}\right)
\\ =& \Phi(f_1^0,f_2^0)\,e^{(\delta-d)t}+O_{\delta}\left( e^{(\delta-d-\eta_{\delta})t}\|f_1\|_{\scrS^m(\GaG)}\|f_2\|_{\scrS^m(\GaG)}\right),
\end{align}
where $\Phi(f_1^0,f_2^0)$ is given by the sum \eqref{maintermsum} under the aforementioned isomorphism.

Note that
\begin{align*}
\beta &:= \min\big\lbrace \eta_{\delta},\delta-s_1-r-\xi,\delta-d-\lambda(\delta,r)\big\rbrace
\\&=\min\left\lbrace 1 ,2\delta-d,\delta-s_1-r-\xi, \min_{s\in[s_1+r,\delta]}\big( \delta-s+\min\lbrace 1, 2s-d\rbrace\big)\right\rbrace
\\&\geq \min\left\lbrace 1 ,2\delta-d,\delta-s_1-r, \min_{s\in[s_1+r,\delta]}\big( \delta-s+\min\lbrace 1, 2s-d\rbrace\big)\right\rbrace-\xi
\\&= \min\left\lbrace 1 ,2\delta-d,\delta-s_1-r,\delta+s_1+r-d\right\rbrace-\xi
\\&= \min\left\lbrace 1,\delta-s_1-r,\delta+s_1+r-d\right\rbrace-\xi\\
&= \min\left\lbrace 1,\delta-s_1-r\right\rbrace-\xi
\\&\geq  \min\left\lbrace 1,\delta-s_1\right\rbrace-(\xi+r).
\end{align*}

Entering \eqref{plusbd}, \eqref{minusbd}, and \eqref{maintermbd} into \eqref{matrixcoeffdecomp} gives
\begin{align*}
e^{(d-\delta)t}\langle \rho(a_t)f_1,f_2\rangle=\Phi(f_1^0,f_2^0)+O_{r,\xi}\big(e^{ - \beta t} \|f_1\|_{\scrS^m(\GaG)}\|f_2\|_{\scrS^m(\GaG)}\big).
\end{align*}

Writing $\eta=\min (1, \delta-s_1)$ and $\epsilon=\xi+r$, we thus have 
\begin{align*}
e^{(d-\delta)t}\langle \rho(a_t)f_1,f_2\rangle=\Phi(f_1^0,f_2^0)+O_{\epsilon}\big(e^{ - (\eta-\epsilon) t} \|f_1\|_{\scrS^m(\GaG)}\|f_2\|_{\scrS^m(\GaG)}\big).
\end{align*}
Choosing $0<\epsilon<\eta$ gives
\begin{equation*}
\lim_{t\rightarrow\infty} e^{(d-\delta)t}\langle \rho(a_t)f_1,f_2\rangle=\Phi(f_1^0,f_2^0),
\end{equation*}
so $\Phi(f_1^0,f_2^0)=m^{\mathrm{BR}}(f_1)\,m^{\mathrm{BR_*}}(f_2)$ by Proposition \ref{Sobmixing}, completing the proof of Theorem \ref{expmixing}.

\begin{Rmk}\label{final}
In the case when $\Gamma$ is a lattice, i.e., when $\delta=d$, the above proof leads to the claim in Remark \ref{mainthmrmk} (1) as follows. Firstly, we have $\langle \rho(a_t)f_1^0,f_2^0\rangle =\int f_1\, dx \cdot \int f_2\, dx $ for any $t\in\RR$. As before, we use \eqref{plusbd} to bound $\langle \rho(a_t)f_1^+,f_2^+\rangle$. Now, since complementary series representations $\scrU(\upsilon,s)$ with \emph{non-trivial} $\upsilon$ exist only for $ s\le d-1$, the constant $\lambda(d,r)$ appearing in the bound \eqref{minusbd} for $|\langle \rho(a_t)f_1^-,f_2^-\rangle |$ is at most $ \alpha:= \max \{s-d-\eta_s : {s\in [s_1+r, d-1]}\}$.
We note that $\alpha<-2+r$, and if, in addition, $\lbrace s>s_1+1: \scrU(\upsilon,s) \textrm{ is weakly contained in } L^2(\GaG)\rbrace=\emptyset$, then one can take $\lambda(d,r) < s_1-d+r$. 
 Hence combining these bounds, we obtain Remark \ref{mainthmrmk} (1).
\end{Rmk}

\end{document}